\newcommand\C{\mathbb{C}}
\newcommand\N{\mathbb{N}}
\newcommand\R{\mathbb{R}}
\newcommand\Q{\mathbb{Q}}
\newcommand\Z{\mathbb{Z}}
\newcommand\Spin{\text{Spin}}
\newcommand\Id{\textup{id}}
\newcommand\acs{\varphi}  				% almost contact structure
\newcommand\nacs{S\varphi}				% stablised normal almost contact structure	
\newcommand\bnacs{\overline{\scxs}}		% smoothing lifting a stabilised almost contact structure over B \to BU
\newcommand\acxs{J}						% almost complex structure
\newcommand\scxs{\zeta}					% (stable) complex normal structure
\newcommand\bscxs{\bar \zeta}			% lifting of a complex normal structure
\newcommand\stein{\sigma}				% stein structure
\newcommand\svb{\mu}					% stable vector bundle
\newcommand\im{\textup{Im}}
\newcommand\Ker{\textup{Ker}}
\newcommand\del{\partial}
\newcommand\wt[1]{\widetilde{#1}}
\newcommand\Coker{\textup{Coker}}
\newcommand\Aut{\textup{Aut}}
\newcommand{\xra}{\xrightarrow}
\newcommand{\dlra}[1]{\stackrel{#1}{\longrightarrow}}
\newcommand{\an}[1]{\langle{#1}{\rangle}}
\newcommand{\wh}{\widehat}
\newcommand{\cal}[1]{\mathcal{#1}}
\newtheorem{Theorem}{Theorem}[section]
\newtheorem{Definition}[Theorem]{Definition}
\newtheorem{Lemma}[Theorem]{Lemma}
\newtheorem{Proposition}[Theorem]{Proposition}
\newtheorem{Conjecture}[Theorem]{Conjecture}
\newtheorem{Corollary}[Theorem]{Corollary}
\newtheorem{Notation}[Theorem]{Notation}
\newtheorem{Problem}[Theorem]{Problem}
\newtheorem{Question}[Theorem]{Question}
\theoremstyle{remark}
\newtheorem{Example}[Theorem]{Example}
\newtheorem{Remark}[Theorem]{Remark}
\newcommand{\jbedit}[1]{\begingroup\color{black}#1\endgroup}
\newcommand{\dcedit}[1]{\begingroup\color{black}#1\endgroup}
\date{\today}
\begin{document}
\title{The topology of Stein fillable manifolds in high dimensions I} 
\author{Jonathan Bowden}
\address{Math.\ Institut, Universit\"{a}t Augsburg, Universit\"{a}tstr 14, 86159 Augsburg, Germany}
\email{jonathan.bowden@math.uni-augsburg.de}
\author{Diarmuid Crowley}
\address{Max Planck Institut f\"ur Mathematik, Vivatsgasse 7, D-53111 Bonn, Germany}
\email{diarmuidc23@gmail.com}
\author{Andr\'{a}s I. Stipsicz}
\address{ R\'{e}nyi Institute of Mathematics, Re\'{a}ltanoda u. 13-15., Budapest, Hungary H-1053}
\email{stipsicz@renyi.hu}

\maketitle

\begin{abstract}
We give a
bordism-theoretic characterisation of those closed almost contact
$(2q{+}1)$-manifolds (with $q\geq 2$) which admit a Stein fillable
contact structure. %inducing the given almost contact structure 
Our method is to apply Eliashberg's $h$-principle for Stein manifolds in the
setting of Kreck's modified surgery. As an
application, we show that any  simply connected almost contact
7-manifold with torsion free second homotopy group is Stein fillable.  We also
discuss the Stein fillability of exotic spheres and examine
subcritical Stein fillability.
\end{abstract}

\section{Introduction} \label{sec:introduction} % (fold)
%%%%%%%%%%%%%%%%%%%%%%%%%%%%%%%%%%%%%%%%%%%%%%%%%%%%%%
There have been several recent breakthroughs concerning the existence of contact structures in higher dimensions. 
In \cite{bourgeois, Geiges&Stipsicz10, HajdukWalczak, BCSS2}
contact structures on certain
product manifolds, in \dcedit{in \cite{Ding&Geiges2012} contact structures on certain
$S^1$-bundles were constructed} 
and Casals-Pancholi-Presas \cite{CPP12} and Etnyre~\cite{Etnyre12} have shown that every almost
contact 5-manifold is contact. The general existence question on
higher dimensional manifolds is, however, still open.  
\footnote{The recent preprint of Borman, Eliashberg and
  Murphy~\cite{BEM} offers a proof or the statement that any almost
  contact manifold is contact.}  (In the following we will assume that
all almost contact manifolds are closed --- for open manifolds the
existence question has been settled by using Gromov's $h$-principle.)

Motivated by their 3-dimensional analogues, various notions of fillability and overtwistedness of
contact structures on higher dimensional manifolds have been
extensively studied, cf.\,\,\cite{Massot12, MNPS}. The class of contact structures 
satisfying an appropriate $h$-principle (as overtwisted 3-dimensional 
contact manifolds do), however, has not yet been identified.
\footnote{Again, the recent preprint of Borman, Eliashberg and Murphy
  \cite{BEM} contains such a concept.}

In view of this, one is led to consider the existence of
contact structures with special properties, the most natural of which
is perhaps Stein fillability. Recall that a contact manifold is Stein
fillable if it can be realised as the boundary of a \emph{Stein
  domain}, which is a compact, complex manifold with boundary
admitting a strictly plurisubharmonic function for which the boundary
is a regular level set. One of the motivating problems which concerns
us here is the following:

\begin{Problem}[Stein Realisation Problem]\label{prob:SteinReal}
Determine the almost contact structures which are realised by Stein
fillable contact structures.
\end{Problem}

By Eliashberg's characterisation of Stein manifolds
\cite{Eliashberg??, Cieliebak&Eliashberg12}, the existence of Stein
fillings in higher dimensions is reduced to a topological question
about whether a given manifold admits a nullbordism containing only
handles up to the middle dimension and whose tangent bundle admits an
almost complex structure.  For a given $2n$-manifold (with $n>2$), a
direct argument can decide whether it admits a Stein structure, but it
is more delicate to see whether an odd dimensional manifold can be
presented as the boundary of \dcedit{some} manifold carrying a Stein structure.

This question can be naturally studied within the framework of
(the appropriate) cobordism theory. The main goal of this article is to
elucidate the algebro-topological consequences of the above
characterisation of Stein fillability by Eliashberg.  This approach
for constructing contact structures on manifolds was initiated by
Geiges \cite{Geiges97}, and here we pursue the same ideas, using the
general setting of Kreck's modified surgery \cite{Kreck99}.

\dcedit{
An almost contact structure $\acs$ on a closed oriented $(2q{+}1)$-manifold $M$
is a reduction of the structure group of the tangent bundle of $M$ to $U(q)$.
The almost contact structure $\acs$ defines a complex structure $\scxs$ on the stable 
normal bundle of $M$ which we regard as a map $\scxs \colon M \to BU$, where $BU$ denotes
the classifying space of stable complex vector bundles. 
The $q^{th}$ Postnikov factorisation of $\scxs$ consists of a space $B^{q-1}_\scxs$ and maps
\[  M \xra{~~\bscxs~~} B^{q-1}_\scxs \xra{~\eta^{q-1}_\scxs~} BU, \]
such that $\eta^{q-1}_\scxs$ is a fibration, $\scxs = \eta^{q-1}_\scxs \circ \bscxs$, the map
$\bscxs$ is a $q$-equivalence and $\eta^{q-1}_\scxs$ is a $q$-coequivalence;
%\jbcomm{I commented out a sentence or two here.}
%The fibre homotopy class of the fibration $\eta^{q-1}_\scxs$ is a well-defined invariant of the stably complex manifold $(M, \scxs)$, called the {\em complex normal $(q-1)$-type of $(M, \scxs)$}; 
see Definition \ref{def:complex-normal-k-type}. In addition, there is a canonical bundle isomorphism $\bscxs^*(\eta^{q-1}_\scxs) \cong \nu_M$, where $\nu_M$ is the stable normal bundle of $M$ (and $\eta ^{q-1}_\scxs$ is regarded as a stable oriented vector bundle over $B^{q-1}_\zeta$).}
The pair $(M, \bscxs)$ defines a
bordism class,
\[ [M, \bscxs] \in \Omega_{2q{+}1}(B^{q-1}_\scxs; \eta^{q-1}_\scxs), \]
in the bordism theory defined by the complex bundle $(B^{q-1}_\scxs, \eta^{q-1}_\scxs)$;
see Section \ref{subsec:surgery_settingA} and Definition \ref{def:Beta-bordism}.
With these notions in hand we have the following:
\begin{Theorem} \label{thm:main}
%%%%%%%%%%%%%%
A closed almost contact manifold $(M, \acs)$ of dimension
$(2q{+}1)\geq5$ admits a Stein filling if and only if $[M, \bscxs] = 0
\in \Omega_{2q{+}1}(B^{q-1}_\scxs; \eta^{q-1}_\scxs)$.
\end{Theorem}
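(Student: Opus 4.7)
\emph{Necessity.} Suppose $(M,\acs)$ admits a Stein filling $W$. The tangent bundle $TW$ carries an honest complex structure, so $\nu_W$ acquires a complex structure restricting to $\scxs$ on $\partial W = M$. By Eliashberg's theorem, $W$ has a handle decomposition with handles of index at most $q+1$, so $W$ has the homotopy type of a $(q+1)$-complex, and Poincar\'e--Lefschetz duality gives $H^{j}(W,M) = 0$ for $j \leq q$. Since $\eta^{q-1}_\scxs$ is a $q$-coequivalence its homotopy fiber $F$ has $\pi_i(F)=0$ for $i \geq q$, so the obstructions to extending $\bscxs$ to a lift $\tilde W\colon W \to B^{q-1}_\scxs$, which live in $H^{i+1}(W,M;\pi_i F)$ for $i \leq q-1$, all vanish. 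The resulting $(W,\tilde W)$ witnesses $[M,\bscxs]=0$.

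\emph{Sufficiency, part 1 (surgery).} Given $[M,\bscxs]=0$, pick any null-bordism $(W_0,\tilde W_0)$ in the bordism theory over $(B^{q-1}_\scxs,\eta^{q-1}_\scxs)$. Because $\dim W_0 = 2q+2$, Kreck's modified surgery applied below the middle dimension on the interior of $W_0$ (keeping the lift fixed) produces a bordant $W$ for which $\tilde W\colon W \to B^{q-1}_\scxs$ is a $(q+1)$-equivalence. Using that $\bscxs$ is a $q$-equivalence, a diagram chase in the commuting triangle $M \to W \to B^{q-1}_\scxs$ shows the inclusion $M \hookrightarrow W$ is itself a $q$-equivalence, so $\pi_i(W,M)=0$ for $i \leq q$. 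Since $\dim W \geq 6$, the Whitney trick permits cancellation of low-index handles with one-higher-index partners, yielding a handle decomposition of $W$ built from $\emptyset$ using only handles of index $\leq q+1$, with $\partial W = M$ at the top.

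\emph{Sufficiency, part 2 (complex structure and Eliashberg).} The bundle isomorphism $\nu_W \cong \tilde W^*\eta^{q-1}_\scxs$ equips $\nu_W$ with a complex structure, and hence $TW$ with a stable complex structure restricting on $M$ to the almost complex structure on $TM \oplus \trivR$ induced by $\acs$. The obstructions to destabilising this to an honest $U(q{+}1)$-structure rel $\partial W$ lie in $H^{i+1}(W,M;\pi_i(U/U(q+1)))$, but the stable-range isomorphisms $\pi_k(U(q+1)) \cong \pi_k(U)$ for $k \leq 2q+1$ force $\pi_i(U/U(q+1))=0$ for $i \leq 2q+2$, while $H^{i+1}(W,M) = 0$ for $i+1 > \dim W$. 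Thus $W$ inherits an almost complex structure extending $\acs \oplus \trivR$, and Eliashberg's $h$-principle upgrades the pair (almost complex structure, subcritical handle decomposition) to a Stein structure on $W$ filling $(M,\acs)$.

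\emph{Main obstacle.} I expect the most delicate point to be carrying out Kreck's surgery \emph{within} the bordism theory over $(B^{q-1}_\scxs,\eta^{q-1}_\scxs)$ so that the lift, and hence the complex structure on the stable normal bundle, is preserved at every step; one must also make precise the passage from $\tilde W$ being a $(q+1)$-equivalence to the existence of a subcritical handle decomposition of $W$ from the $\emptyset$ side, matching the convention required by Eliashberg's theorem.
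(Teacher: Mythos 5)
Your necessity argument is a valid (and arguably more direct) alternative to the paper's: rather than comparing the complex normal $(q{-}1)$-type of $W$ with that of $M$ and invoking the Postnikov uniqueness machinery (Lemma~\ref{lem:Aut(B^k)}, Corollary~\ref{cor:Postnikov}), you extend $\bscxs$ over $W$ by elementary obstruction theory, using that $H^{j}(W,M)$ vanishes for $j\le q$ and $\pi_i(\mathrm{fib}\,\eta^{q-1}_\scxs)$ vanishes for $i\ge q$. That is fine. The surgery step in the sufficiency direction (Kreck's surgery below the middle dimension, then Wall's geometric connectivity theorem to get a subcritical handle decomposition) is essentially the paper's Lemma~\ref{lem:topological_filling} $(3)\Rightarrow(1)$.

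The gap is in ``Sufficiency, part 2''. You assert that the obstructions to destabilising the stable complex structure on $TW$ to an honest almost complex structure rel $\partial W$ live in $H^{i+1}(W,M;\pi_i(U/U(q{+}1)))$. That is the obstruction theory for lifting the map $W\to BU$ along the fibration $BU(q{+}1)\to BU$, i.e.\ for producing \emph{some} rank $(q{+}1)$ complex bundle $E$ over $W$ with the prescribed stable class. It is \emph{not} the obstruction theory for producing a complex structure on the real bundle $TW$ itself relative to the given one on $TM\oplus\trivR$: for that, the lifting problem is $BU(q{+}1)\to BSO(2q{+}2)$ and the relevant coefficient groups are $\pi_i(SO(2q{+}2)/U(q{+}1))$, which are far from trivial. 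The fact that $\pi_i(U/U(q{+}1))=0$ in the range you need is therefore a red herring; producing $E$ abstractly does not give an identification $E_\R\cong TW$ extending the given one on $M$, and over a $(2q{+}2)$-manifold with boundary this is a genuine relative issue (the discrepancy obstruction in $H^{2q+2}(W,M;\cdot)\cong H_0(W)\cong\Z$ does not vanish for degree reasons).

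The paper's Lemma~\ref{lem:destabilising} treats the correct lifting square $BU(q{+}1)\to BSO(2q{+}2)$ versus $BU\to BSO$, comparing the unstable and stable obstructions via Gray's isomorphism $S_*\colon\pi_k(SO(2q{+}2)/U(q{+}1))\cong\pi_k(SO/U)$ for $k\le 2q$, and applies it one handle at a time (so the obstruction only ever appears in degree $\le q{+}1$, well within Gray's range). Crucially, this builds the almost complex structure from the empty side of the handlebody, so the induced almost contact structure on $\partial W$ is only guaranteed to be \emph{stably} homotopic to $\acs$; one then needs Lemma~\ref{lem:stable_to_unstable}/Lemma~\ref{lem:stable_to_unstable_Stein} (boundary connected sum with a Stein-fillable $(S^{2q+1},\acs_0)$) to realise the given $\acs$ itself. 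Your proposal omits this final correction step entirely, and it cannot be avoided without a more careful analysis of the top-degree obstruction.
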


\noindent (An expanded version of the result is given in
Theorem~\ref{thm:Stein}).
The bordism groups appearing in Theorem \ref{thm:main} are isomorphic,
via the Pontrjagin-Thom isomorphism, to the stable homotopy groups of
the Thom spectrum of $\eta^{q-1}_\scxs$. Hence the entire apparatus of
stable homotopy theory is available to compute these groups,
and if one can show that $\Omega_{2q{+}1}(B^{q-1}_\scxs; \eta^{q-1}_\scxs)=0$, 
a general existence result follows.
We will show that this is the case for simply connected 7-manifolds
with torsion free second homotopy groups.

\begin{Theorem} \label{thm:seven_mfold_contact}
%%%%%%%%%%
Let $M$ be a closed simply connected $7$-manifold with $\pi_2(M)$ torsion
free.  Then $M$ admits an almost contact structure, and every almost
contact structure on $M$ can be represented by a Stein fillable
contact structure.
\end{Theorem}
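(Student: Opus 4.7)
The plan has two parts: first, show $M$ admits an almost contact structure; then by Theorem~\ref{thm:main} reduce Stein fillability of every such structure to the vanishing of $\Omega_7(B^2_\scxs; \eta^2_\scxs)$, which I approach via a homological analysis of $B^2_\scxs$ and a complex-oriented spectral sequence.

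\emph{Existence.} Since $M^7$ is closed and orientable, $\chi(M) = 0$ and $TM \cong \trivR \oplus \xi$ with $\xi$ oriented of rank $6$. The obstructions to a $U(3)$-reduction of $\xi$ lie in $H^{i+1}(M; \pi_i(SO(6)/U(3)))$; since $SO(6)/U(3) \cong \C\p^3$ has $\pi_i = 0$ for $i \in \{0,1,3,4,5,6\}$, the sole obstruction is the $2$-torsion class $W_3(M) \in H^3(M;\Z)$. Because $H_2(M;\Z) \cong \pi_2(M)$ is torsion-free, the universal coefficient theorem forces $H^3(M;\Z)$ to be torsion-free, so $W_3(M) = 0$ and $M$ is almost contact.

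\emph{Structure of $B^2_\scxs$.} For any almost contact $\acs$, the Postnikov description gives $\pi_1(B^2_\scxs) = \pi_3(B^2_\scxs) = 0$, $\pi_2(B^2_\scxs) \cong A := \pi_2(M)$ (torsion-free), and $\pi_i(B^2_\scxs) \cong \pi_i(BU)$ for $i \geq 4$. Consequently $B^2_\scxs$ fits in a principal fibration
\[ BSU \lra B^2_\scxs \lra K(A, 2), \]
where $BSU = BU\langle 4 \rangle$. As $A$ is free abelian of rank $r$, $K(A,2) \simeq (\C\p^\infty)^r$ has integral cohomology $\Z[x_1, \ldots, x_r]$ with $|x_i| = 2$, while $H^*(BSU;\Z) \cong \Z[c_2, c_3, \ldots]$; both rings are torsion-free and concentrated in even degrees. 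The Serre spectral sequence of the fibration therefore has its $E^2$-page in bidegrees $(p, q)$ with both $p, q$ even, all differentials vanish for parity reasons, and hence $H_*(B^2_\scxs;\Z)$ is itself torsion-free and concentrated in even degrees.

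\emph{Bordism vanishing and main obstacle.} By Pontrjagin--Thom, $\Omega_7(B^2_\scxs; \eta^2_\scxs) \cong \pi_7(M\eta^2_\scxs)$. Since $\eta^2_\scxs$ factors through $BU$, $M\eta^2_\scxs$ is complex-oriented, and the Thom isomorphism identifies $MU_*(M\eta^2_\scxs) \cong MU_*(B^2_\scxs)$, which by the previous step is free over $MU_*$ and concentrated in even degrees. A complex-oriented spectral sequence argument---either Adams--Novikov for $M\eta^2_\scxs$ using the $MU_*MU$-comodule structure of $MU_*(B^2_\scxs)$, or the skeletal AHSS for $\pi_*(M\eta^2_\scxs)$ with stable-homotopy coefficients whose differentials are governed by the Chern classes of $\eta^2_\scxs$---then yields $\pi_n(M\eta^2_\scxs) = 0$ for odd $n$; in particular $\Omega_7(B^2_\scxs; \eta^2_\scxs) = 0$. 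The main obstacle is precisely this last step: since $M\eta^2_\scxs$ is not itself an $MU$-module, translating even-concentration of $H_*(B^2_\scxs;\Z)$ into vanishing of odd-degree homotopy requires carefully exploiting the complex orientation to kill contributions (such as $\pi_7^s = \Z/240$) that a priori persist at the $E^2$-page of the AHSS with stable-homotopy coefficients.
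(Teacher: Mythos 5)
Your existence argument is correct and is in fact a different (and arguably cleaner) route than the paper's: you reduce the structure group $SO(6)\to U(3)$ directly by obstruction theory, using $SO(6)/U(3)\cong \C\p^3$ and noting the sole obstruction $W_3(M)\in H^3(M;\Z)$ is torsion while $H^3(M;\Z)$ is torsion-free, whereas the paper routes through $\Spin^\C$-structures (Lemma~\ref{lem:7d_acs_existence}, via the fibration $\Spin^\C/U\to BU\to B\Spin^\C$ and a computation showing the primary obstruction class is $2$-torsion). Your identification of $B^2_\scxs$ and the evenness/torsion-freeness of $H_*(B^2_\scxs;\Z)$ agrees with the paper's Lemma~\ref{lem:2type} (which gives the explicit product $B^2_\scxs = K(H,2)\times BSU$).

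The gap is exactly where you flag it, but it is not a technical detail that complex orientation can fix: your proposed conclusion ``$\pi_n(M\eta^2_\scxs) = 0$ for odd $n$'' is simply \emph{false}. Take $H=0$ (allowed here, e.g.\ $M=S^7$): then $B^2_\scxs = BSU$, whose integral homology is even and torsion-free and whose $MU$-homology is free over $MU_*$ in even degrees, yet $\pi_9(MSU)=\Omega_9^{SU}\cong\Z_2\neq 0$. Even-concentration of $H_*(B^2_\scxs)$ or of $MU_*(B^2_\scxs)$ does not propagate to odd-degree vanishing of $\pi_*(M\eta^2_\scxs)$, because $M\eta^2_\scxs = T(\gamma)\wedge MSU$ is neither complex-orientable (the universal line bundle has no Thom class in $MSU$) nor an $MU$-module, and $\text{Ext}_{MU_*MU}(MU_*,-)$ of a free even $MU_*$-module is not concentrated in even total degree (that $\text{Ext}$ already computes $\pi_*^s$ up to convergence). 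What the paper actually does is run the AHSS $E^2_{p,q}=H_{p+2}(T(\gamma);\Omega_q^{SU})\Rightarrow\Omega^{SU}_{p+q}(K(H,2);\gamma)$, observe that on the $7$-line the \emph{only} surviving $E^2$-term is $E^2_{6,1}=H_8(T(\gamma);\Z_2)$ — which is generically nonzero — and then show $E^3_{6,1}=0$ by computing the $d^2$-differentials via Teichner's identification with $(Sq^2)^*$, the Cartan formula, and an induction on the rank of $H$ (Lemmas~\ref{lem:d2} and \ref{lem:d_2-is-exact}). That differential computation is the content of the proof; your proposal does not supply a substitute for it, and no parity or complex-orientation argument can, since the relevant $E^2$-term lives in odd total degree precisely because of the torsion in $\Omega^{SU}_*$.
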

\noindent

(Theorem \ref{thm:seven_mfold_contact} can be interpreted as an extension of existence
results for contact structures on $1$-connected almost contact
5-manifolds and 2-connected 7-manifolds~\cite{Geiges92, Geiges97}).

As expected, the existence of a Stein fillable contact structure on a
manifold depends on the smooth structure it carries, and not simply on
the underlying homeomorphism type. This fact can be most transparently
demonstrated by showing that certain exotic spheres (i.e. smooth
manifolds homeomorphic but not diffeomorphic to the sphere of the same
dimension) do not carry any Stein fillable contact structures. Using
the obstruction class of Theorem~\ref{thm:main}, we prove the following
theorem which answers a question raised by Eliashberg, see \cite[3.8]{Problem12},
in roughly three-quarters of all dimensions.

\begin{Theorem} \label{thm:Not_Stein_sphere}
%%%%%%%%%%%%%%%
Let $\Sigma^{2q{+}1}$ be a homotopy sphere which admits no framing
bounding a paralellizable manifold. 
If $q \not\equiv 1, 3, 7$~mod~$8$ or if $q \equiv 1$~mod~$8$ and $q > 9$ or if $q = 7$ or $15$,
then $\Sigma ^{}$ admits no Stein fillable contact structure.
\end{Theorem}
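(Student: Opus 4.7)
The plan is to apply Theorem~\ref{thm:main} contrapositively: I will show that for any almost contact structure $\acs$ on $\Sigma$, the bordism class $[\Sigma,\bscxs] \in \Omega_{2q+1}(B^{q-1}_\scxs;\eta^{q-1}_\scxs)$ is nonzero. First, I identify the normal $(q-1)$-type. By Kervaire-Milnor, every homotopy sphere is stably parallelisable. Since $\Sigma$ is $(2q)$-connected, obstruction theory implies that $\scxs\colon\Sigma\to BU$ factors (uniquely up to homotopy) through the $q$-connected cover $BU\langle q{+}1\rangle\to BU$, and this lift realises the normal $(q-1)$-type. Consequently $B^{q-1}_\scxs\simeq BU\langle q{+}1\rangle$, $\eta^{q-1}_\scxs$ is the restriction of the universal stable complex bundle, and by Pontrjagin-Thom
\[ \Omega_{2q+1}(B^{q-1}_\scxs;\eta^{q-1}_\scxs) \;\cong\; \pi_{2q+1}(MU\langle q{+}1\rangle). \]

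Next, I reduce to framed bordism modulo $J$. The stable parallelisability of $\Sigma$ yields framings; any complex framing of $\nu_\Sigma$ gives a real framing and hence an element $[\Sigma]_f\in\pi^s_{2q+1}$. The unit map of the $E_\infty$-ring spectrum $MU\langle q{+}1\rangle$ sends $[\Sigma]_f$ to $[\Sigma,\bscxs]$ for the almost contact structure corresponding to the framing. Varying $\acs$ changes this framing by elements in the image of $J\colon \pi_{2q+1}(U)\to\pi^s_{2q+1}$, so the image in $\pi^s_{2q+1}/\mathrm{im}(J)$ is independent of $\acs$. By the Kervaire-Milnor exact sequence, $\Sigma\in bP_{2q+2}$ iff this image vanishes, so by hypothesis it is nonzero.

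It remains to show that the induced homomorphism
\[ \pi^s_{2q+1}/\mathrm{im}(J) \;\longrightarrow\; \pi_{2q+1}(MU\langle q{+}1\rangle) \]
is injective for the claimed values of $q$, and this is the main obstacle. The natural tool is Adams's complex $e$-invariant $e_\C$, which factors through $\pi_{2q+1}(MU\langle q{+}1\rangle)$ via the complex orientation of $MU\langle q{+}1\rangle\to MU\to KU$, and which by Adams injects $\pi^s_{2q+1}/\mathrm{im}(J)$ into $\Q/\Z$ away from certain exceptional stems. The excluded congruences $q\equiv 1,3,7\pmod 8$ are precisely the dimensions in which Hopf-invariant-one phenomena and Adams $\mu$-family elements can lie in the kernel of $e_\C$; the sub-exceptions ($q=7$, $15$, and $q\equiv 1\pmod 8$ with $q>9$) are handled by a direct analysis of $\pi_{2q+1}(MU\langle q{+}1\rangle)$ via the Adams-Novikov spectral sequence, supplemented by Hopf invariant considerations that rule out the finitely many potentially surviving classes.
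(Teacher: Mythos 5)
There is a genuine gap, and also a conceptual error in the proposed tool for the key step.

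Your setup is correct and consistent with the paper: the complex normal $(q-1)$-type of a homotopy sphere is $(BU\langle q{+}1\rangle, \pi_{q{+}1})$ (this is the paper's Example \ref{ex:k-type_of_sphere}), and via Pontrjagin--Thom the obstruction group is $\pi_{2q+1}(MU\langle q{+}1\rangle)$. Your reduction -- that the image of the framed bordism class $[\Sigma]_f$ in $\Coker(J_{2q+1})$ is exactly the Kervaire--Milnor invariant, and that it suffices to prove $\Ker\bigl(\pi^s_{2q+1}\to\pi_{2q+1}(MU\langle q{+}1\rangle)\bigr)\subset \im(J)$ -- is logically sound. But this kernel containment \emph{is} the entire content of the theorem, and you leave it unproved; you describe it, accurately, as ``the main obstacle.'' Merely naming the Adams--Novikov spectral sequence and ``Hopf invariant considerations'' as tools that ``rule out the finitely many potentially surviving classes'' is not a proof, especially since the sub-exceptional cases $q=7,15$ and $q\equiv 1\bmod 8$, $q>9$ are exactly where a careful case analysis is needed. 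Moreover, the tool you foreground is misapplied: Adams' $e_{\C}$-invariant detects the \emph{image} of $J$, not the cokernel -- it does not descend to a well-defined, let alone injective, map on $\pi^s_{2q+1}/\im(J)$. Since your argument requires detecting $\Coker(J)$ inside $\pi_{2q+1}(MU\langle q{+}1\rangle)$, the $e$-invariant cannot play the role you assign it. There is also a small imprecision worth flagging: changing $\acs$ alters $\zeta$ by an arbitrary element of $\pi_{2q+1}(SO/U)$, so the induced framing can change by any element in the image of the \emph{real} $J$-homomorphism, not only the complex one; this does not break your argument (you only ever use the image in $\pi^s/\im(J_O)$), but as written the claim is wrong.

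The paper's proof (given as Theorem \ref{thm:homotopy_sphere}) takes an essentially different route that avoids computing $\pi_*(MU\langle q{+}1\rangle)$. It observes that a Stein filling $W$ of $\Sigma$ is $q$-connected, and then invokes Wall's classification of $q$-connected $(2q{+}2)$-manifolds with homotopy-sphere boundary by triples $(H,\lambda,\alpha)$. It introduces the rel.\ boundary bordism group $A^{U\langle q{+}1\rangle}_{2q{+}2}$ of almost-complex such manifolds, noting that $W$ admits an almost complex structure iff $\im(S\alpha_W)\subset\im(\pi_q(U)\to\pi_q(SO))$, and then tracks the image $C^U_q:=\im(\eta\circ\del\circ F)\subset\Coker(J_{2q+1})$ using Wall's isomorphism $A^{\langle q{+}1\rangle}_{2q+2}\cong P_{2q+2}\oplus(\pi_q(SO)\otimes\pi_q(SO))$ together with the identification of $\pi_q(U)\to\pi_q(SO)$ and Schultz's vanishing result for $q\equiv 1\bmod 8$, $q>9$. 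This converts the problem into explicit, already-computed algebra of quadratic forms and low-dimensional homotopy groups of Lie groups, whereas your approach would require re-deriving essentially the same information by direct stable-homotopy computation. If you want to pursue your homotopy-theoretic route, you would need to replace the $e$-invariant with an invariant that genuinely sees $\Coker(J)$ (e.g.\ working in the Adams or Adams--Novikov $E_2$-term directly, or reproducing Wall's argument via the cell structure of $MU\langle q{+}1\rangle$), and carry out the computation in each congruence class of $q$.
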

\noindent (A more precise version of the result is given in
Theorem~\ref{thm:homotopy_sphere}.)

The obstruction for manifolds to carry Stein fillable contact
structures can also be used to establish the following extension of a
3-dimensional result found in \cite{Bowden12} to higher
dimensions. (The construction is based on non-connected examples of
exactly fillable manifolds of \cite{Massot12} which are not Stein
fillable.)

\begin{Theorem}\label{thm:exact-notStein}
  There exist connected, exactly fillable, contact manifolds that are
  not Stein fillable in all dimensions greater than three.
\end{Theorem}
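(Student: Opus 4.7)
The plan is to take Massot's disconnected examples of exactly fillable but not Stein fillable contact manifolds and make them connected via a contact connected sum, then use Theorem~\ref{thm:main} to detect the failure of Stein fillability on the resulting manifold. Fix $q \geq 2$, and let $(N, \xi_N) = (N_1, \xi_1) \sqcup (N_2, \xi_2)$ denote a two-component, exactly fillable, non-Stein-fillable $(2q{+}1)$-dimensional contact manifold from~\cite{Massot12}. By Theorem~\ref{thm:main}, the bordism class $[N, \bscxs_N]$ is nonzero in $\Omega_{2q+1}(B^{q-1}_{\scxs_N}; \eta^{q-1}_{\scxs_N})$.

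Form the contact connect sum $(M, \xi) := (N_1 \# N_2, \xi_1 \# \xi_2)$, so that $M$ is connected. To produce an exact filling, I attach a subcritical Weinstein $1$-handle to the disjoint union of the given exact fillings of $(N_1, \xi_1)$ and $(N_2, \xi_2)$; since subcritical Weinstein handle attachment preserves the Liouville structure, the result is an exact filling of $(M, \xi)$.

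For non-Stein-fillability I return to Theorem~\ref{thm:main} and aim to show that the intrinsic class $[M, \bscxs_M]$ is nonzero. The trace $W$ of the Weinstein $1$-handle used above is a complex cobordism from $N$ to $M$ extending the complex structures on the stable normal bundles induced by $\xi_N$ and $\xi$. After choosing a Postnikov stage through which the normal data on $W$ (and hence on both $N$ and $M$) factors, the classes $[N]$ and $[M]$ become equal in that ambient bordism group. The non-triviality of $[N, \bscxs_N]$ then propagates to $[M, \bscxs_M]$, either by detecting the obstruction with a characteristic number that is additive under disjoint union and invariant under connected sum in odd dimensions, or by a direct naturality argument for the Pontrjagin--Thom isomorphism along the inclusions of normal $(q{-}1)$-types. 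In either case Theorem~\ref{thm:main} then implies that $(M, \xi)$ is not Stein fillable.

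The main obstacle is this last identification step: because the normal $(q{-}1)$-types are manifold-dependent and because $N$ and $M$ already differ on the level of $\pi_0$, one must ensure the non-triviality of the bordism class on $N$ genuinely transfers to $M$ under the canonical map of Postnikov factorisations induced by the inclusions into $W$. The cleanest resolution is presumably to pinpoint in Massot's examples a specific characteristic number that detects $[N, \bscxs_N]$ and to verify directly that this number is preserved under the surgery interchanging $N$ and $M$.
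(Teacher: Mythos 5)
Your geometric construction (Weinstein $1$-handle attachment to the exact filling, producing an exact filling of the connected sum) is exactly what the paper does, so that part is sound. However, the step you yourself flag as problematic is a genuine gap, and the way you propose to close it does not obviously work. First, the claim ``By Theorem~\ref{thm:main}, the bordism class $[N, \bscxs_N]$ is nonzero'' is not licit for the disconnected $N = N_1 \sqcup N_2$: the complex normal $(q{-}1)$-type and the associated $q$-smoothing are only defined for connected manifolds (the map $\bscxs$ is required to be a $(q)$-equivalence, in particular a $\pi_0$-isomorphism), and the Filling Theorem as stated applies to connected $M$. Second, even granting a sensible bordism class for $N$, passing to $N_1 \# N_2$ changes the fundamental group and hence changes the normal $(q{-}1)$-type; there is no single ambient bordism group in which both classes live, so ``naturality of Pontrjagin--Thom'' does not by itself transfer the obstruction. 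You would need to exhibit a target fibration into which both normal types map and a specific invariant that survives the map --- precisely the content you defer to ``presumably.''

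The paper sidesteps both difficulties by using a coarser but explicitly computable consequence of the Filling Theorem, namely Proposition~\ref{prop:fill1}\,\eqref{prop:fill1-fc}: if $N$ is Stein fillable then the classifying map $u\colon N \to K(\pi_1(N), 1)$ of the universal cover kills the fundamental class. This is precisely the kind of ``characteristic number'' you gesture at, and the reason it can be verified is a structural feature of Massot's examples that your write-up never uses: the exactly fillable manifolds in \cite{Massot12} (quotients of contractible Lie groups) are \emph{aspherical}. The filling there is a symplectisation-like cobordism $M \times [0,1]$ with both ends convex, so one attaches a Weinstein $1$-handle to get a connected exact filling of $N := -M \# M$. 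Since $M$ is aspherical, $u_M$ is a homotopy equivalence; combined with $K(\pi_1(N),1) \simeq K(\pi_1(M),1) \vee K(\pi_1(M),1)$ for $q > 1$, one sees directly that $u_{N*}([N]) \neq 0$, contradicting Proposition~\ref{prop:fill1}. To salvage your argument you should replace the vague bordism-transfer step by this concrete homological obstruction, and you should invoke the asphericity of Massot's manifolds, which is what makes the obstruction nontrivial.
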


Further results 
%(involving more elaborate homotopy theoretic arguments in determining bordism groups and elements in them) 
\dcedit{(largely concerning $(q-1)$-connected $(2q{+}1)$-manifolds)}
are deferred to a continuation of the present work in \cite{BCS} --- in the present
paper we emphasize the basic features of the method and restrict
ourselves to the applications listed above.

The paper is organized as follows.  In Section~\ref{sec:appendix} we
give a review of the formulation of Kreck's surgery theory, with the
necessary adaptations to the setting of contact and Stein geometry. 
In particular, we define the obstruction class $[M, \bscxs]$ of Theorem~\ref{thm:main}
in the appropriate bordism group. In
Section~\ref{sec:complex} we set up notations, 
recall some basic notions from contact and symplectic topology and 
interpret  the
obstruction class defined earlier in 
the contact context as the obstruction for an almost contact structure to be representable by a
Stein fillable contact structure. This result leads to  the topological
characterization of Stein fillability of Theorem~\ref{thm:main}.  As an application, in
Section~\ref{sec:7_manifolds} we provide the proof of
Theorem~\ref{thm:seven_mfold_contact}.
Section~\ref{sec:homotopy_spheres} concentrates on highly connected
manifolds, and (among other results) we prove
Theorem~\ref{thm:Not_Stein_sphere}. In Section~\ref{sec:further} we
discuss further obstructions for Stein fillability, and prove
Theorem~\ref{thm:exact-notStein}.  \dcedit{Finally in
Section~\ref{sec:sub-critical} we formulate a version of the Filling Theorem,
Theorem \ref{thm:sub-c_and_products}, which provides an obstruction for subcritical Stein fillability.}
We also examine the Stein fillability of the product of a contact
manifold with a 2-dimensional surface.

\bigskip

{\bf {Acknowledgements:}} The authors would like to thank the
Max-Planck-Institute in Bonn where parts of this work were carried out
and also Anna Abczynski, \dcedit{Fabian Hebestreit} and Oscar
Randal-Williams for helpful comments.  AS was partially supported by
OTKA NK81203, by the \emph{Lend\"ulet program} of the Hungarian
Academy of Sciences and by ERC LDTBud.  The present work is part of
the authors' activities within CAST, a Research Network Program of the
European Science Foundation.

\section{Complex modified surgery} %(fold)
\label{sec:appendix}
%%%%%%%%%%%%%%%%%%%%%%%%%%%%%%%%%%%%%%%%%%%%%%%%%%%%%%
\dcedit{
In this section we develop the theory of modified surgery from
\cite{Kreck99} in the setting where all the stable vector bundles under consideration have complex 
structures.  
Before turning to the details, we describe the motivation and the heuristics of
the constructions encountered in the section.

The fundamental result underpinning our approach to proving existence results of
(Stein fillable)
contact structures is the following discovery of Eliashberg \cite{Cieliebak&Eliashberg12, Eliashberg??} (given formally in Corollary~\ref{thm:h-principle}):
A $(2q{+}2)$-dimensional compact manifold $W^{2q{+}2}$ with boundary $\partial W=M$ and $q\geq 2$ 
admits 
the structure of a Stein domain (and hence $M$ admits a Stein fillable contact structure) 
if and only if $W$ admits (a) an almost complex structure and (b) a handle decomposition
involving handles with index at most $q{+}1$. Therefore in studying the existence of 
Stein fillable contact structures on $M$ we need to understand whether it bounds 
almost complex manifolds with the required handle decompositions.

%Complex bordism theory shows that every $M$ with a fixed
%almost contact structure $\acs$ is the boundary of an almost complex manifold. 
%Note this isn't quite true - then we need the whole Morita-Sato-Ding-Geiges story

Recall that an almost contact structure on $M^{2q{+}1}$ is by definition the reduction of the 
structure group of the tangent bundle to $U(q)\subset SO(2q{+}1)$, or equivalently a map 
$M\to BU(q)$ covering the classifying map $M\to BSO(2q{+}1)$ of its tangent bundle. 
Similarly, an almost complex
structure on $W^{2q{+}2}$ with $\partial W=M$ is simply a map $W\to BU(q{+}1)$ covering the
classifying map of its tangent bundle. Since computations of bordism groups are best
done with stable bundle structures, we will consider \emph{stable} almost contact 
(and \emph{stable} almost complex) structures, hence we need to examine maps from $M$ or $W$ to $BU$.
Complex bordism theory  (see Example \ref{ex:Complex_bordism}) tells us that
every stably almost contact manifold is the boundary of a stably complex $(2q{+}2)$-manifold.
(Since Eliashberg's results requires an almost complex structure, rather than a stabilization 
of it, we need to examine how a stable structure induces an unstable one, and how unique 
this unstable structure is. This will be explained in Section~\ref{subsec:stable_and_unstable_surgery}.)

The other condition on $W$ for the existence of a Stein structure
(regarding a handle decomposition with no handle above the middle
dimension) seems less amenable to the techniques of homotopy theory.
However, a classic theorem of Wall, (formally stated as
Theorem~\ref{thm:ofwall} in Section~\ref{subsec:key_surgery_lemmas}),
turns this condition to a problem about homotopy groups.  Wall's
theorem builds on the $s$-cobordism theorem: Consider $W$
as a handlebody built on $M$, ie. turn the handle decomposition coming
from a plurisubharmonic function on $W$ upside down. The lack of high
index handles in $W$ implies that this upside down decomposition has
no low index handles (ie. handles with indices below the middle
dimension). This property then implies that the relative homotopy
groups vanish up to the middle dimension. The content of Wall's
previously mentioned theorem is the converse of this simple
observation: if the relative homotopy groups vanish, a handle
decomposition with the required constraint on the indices exists. In
turn, the long exact homotopy sequence for the pair $(W, M)$ then
implies that the vanishing of the relative groups can be formulated by
requiring that the inclusion of $M$ as the boundary induces
isomorphism on homotopy groups up to the required dimension. In short,
we look for conditions for the existence of $W$ for which the
classifying map $\scxs \colon M \to BU$ of the stable almost contact
structure extends, and for which we have a control on the homotopy
groups up to a certain range. The control on the homotopy groups is
provided by the following construction of homotopy theory, called
\emph{Postnikov factorization}: for the map $\scxs \colon M \to BU$
and an integer $k$ there is a fibration $\eta ^k _{\scxs} \colon
B^k_{\scxs}\to BU$ and a map $\bscxs \colon M\to B^k _{\scxs}$ which
factors $\scxs$ into
\[
\scxs =\bscxs\circ \eta ^k _{\scxs}
\]
such that $\bscxs$ induces isomorphisms on the homotopy groups with low index,
while $\eta ^k _{\scxs}$ induces isomorphisms on the homotopy groups
with high index. (For the exact statement see Definition~\ref{def:complex-normal-k-type}.)
}
\dcedit{
\begin{Remark}
The above notion of Postnikov factorization can be found in \cite[Chapter~5.3]{Baues77}, and it is a straighforward
generalization of the construction of a Postnikov tower, which is discussed, for example,
in \cite[p.\,354]{Hatcher}. Indeed, the Postnikov factorizations of the constant map $X\to \{ *\}$ provide the Postnikov tower of $X$.
\end{Remark}}

\dcedit{ Considering now the map $\bscxs\colon M \to B^k_{\scxs}$ (for
  appropriately chosen $k$), the null-bordism of the pair $(M,
  \bscxs)$ provides a manifold $W$ which carries a (stable) complex
  structure, and (through the control on its relevant homotopy groups)
  at the same time admits the required handle decomposition.  We point
  out a further subtlety here: for $f \colon W \to B$ providing the
  null-bordism of $(M, \bscxs)$ we need to require that $f$ also
  covers the classifying map of the (stable) tangent bundle of $W$.
  This ensures that the stable almost complex structure lives on the
  right bundle.  Therefore in defining the bordism groups, we are not
  considering bordism groups of the spaces $B^k_\scxs$ only, but
  rather bordism groups with maps to $BU$ and therefore to $BSO$.

The scheme discussed above can be naturally phrased in terms of $(B, \svb)$-manifolds 
where $\svb \colon B \to BSO$ is a stable vector bundle over $B$.
We briefly recall this setting in Section~\ref{subsec:surgery_settingA}, which allows
us to apply Kreck's approach to surgery theory.}
To be consistent with the existing literature, we will formulate the set-up
using stable \emph{normal} maps, although in our applications we will need
almost complex structures on the tangent bundles of even dimensional manifolds. 
In Section~\ref{subsec:cpx} we discuss the
connection between the stable normal setting and the stable tangential setting and
formulate the basic concepts and definitions of ``complex modified surgery''.
In Section \ref{subsec:stable_and_unstable_surgery} 
we handle the transition from stable complex structures to almost complex structures
and almost contact structures.
After these preliminaries, in 
Section~\ref{subsec:key_surgery_lemmas} we prove our main surgery lemmas 
(the Filling Lemma~\ref{lem:topological_filling}, and 
the Stable and Unstable Surgery Lemmas~\ref{lem:stable_bordism} and 
\ref{lem:unstable_bordism}), which 
lead to the
identification of the obstruction class of Theorem~\ref{thm:main}.
(The contact/symplectic interpretation of this class, and hence the proof of 
Theorem~\ref{thm:main} will be given in Section \ref{sec:complex}.) 
Finally, in Section~\ref{subsec:normal_k_types} we discuss some explicit constructions and
computations in Section \ref{subsec:normal_k_types}.
%%%%%%%%%%%%%%%%%%%%%%%%%%%%%%%%%%%%%%%%%%%%%%%%%%%%%%

\subsection{The surgery setting: stable normal bundles} %(fold)
\label{subsec:surgery_settingA}
%%%%%%%%%%%%%%%%%%%%%%%%%%%%%%%%%%%%%%%%%%%%%%%%%%%%%%
In this subsection we briefly recall the definition of a
``$(B, \svb)$-manifold'' which is a manifold with extra topological structure
on its stable normal bundle.  The theory of $(B, \svb)$-manifolds goes back to
\cite{Lashof63} and was used systematically in the modified surgery
setting of \cite{Kreck99}.  For a detailed treatment of $(B, \svb)$-manifolds
we refer the reader to \cite[Chapter II]{Stong} and \cite[\S 2]{Kreck99}.  
%While we briefly recall some fundamental definitions for
%$(B, \svb)$-manifolds, we shall assume that the reader is familiar with these
%ideas.

%Since we shall be working with stably complex manifolds, 
The starting point for our discussion of $(B, \svb)$-manifolds is
a fibration
\[ \svb \colon B \to BSO \]
where $BSO$ is the classifying space of the stable special orthogonal
group $SO$ and $B$ has the homotopy type of a CW complex with a finite number
of cells in each dimension. Since $BSO$ classifies oriented stable vector
bundles, we regard $(B, \svb)$ as an oriented stable vector bundle over $B$.
Given a compact oriented $n$-manifold $X$, let
\[ \nu \colon X \to BSO \]
denote the stable normal Gauss map of $X$.  The stable normal Gauss map $\nu$
is defined by the classifying map of the normal bundle of an embedding
$X \to \R^{n+k}$ for $k>\!\!>n$.  Letting $k$ tend to infinity, the
space of such embeddings is contractible and hence $\nu$ is a
well-defined stable vector bundle over $X$.

A $(B, \svb)$-structure on $X$ is a map 
$\bar \nu \colon X \to B$ which lifts the map $\nu$ over $\svb$: that is, 
there is a commutative diagram:
\[ \xymatrix{  & B \ar[d]^-\svb\\
X \ar[r]^-{\nu} \ar[ur]^-{\bar \nu} & BSO.} \]
A {\em normal $(B, \svb)$-manifold} is a pair $(X, \bar \nu)$ as
above. 

\noindent We now give the basic notions needed in the theory of $(B, \svb)$-manifolds.

\subsection*{Equivalence of $(B, \svb)$-structures:} Two $(B, \svb)$-structures 
$\bar \nu _0$ and $\bar \nu _1$ 
on $X$ are \emph{equivalent}, if there is a $(B, \svb)$-structure $\bar \nu $ on $X \times [0, 1]$ which restricts to the 
$(B, \svb)$-structure $\bar \nu _0$ 
on $X \times \{0\}$ and to $\bar \nu _1$ on
$X \times \{1\}$.
%

%For later use we record the following
%
%\begin{Definition}[Normal $k$-smoothing] \label{def:normal-k-smoothing}
%A normal $k$-smoothing in $(B, \svb)$ is a normal $(B,
%\mu)$-manifold $(X, \bar \nu)$, where $\bar \nu \colon X \to B$ is a
%$(k{+}1)$-equivalence; i.e.~$\bar \nu$ induces an isomorphism on
%homotopy groups $\pi_i$ for $i \leq k$ and a surjection on
%$\pi_{k{+}1}$.
%\end{Definition}
%
\subsection*{Pullback of $(B, \svb)$-structures:} Given a $(B, \svb)$-structure $\bar \nu_1 \colon
X_1 \to B$ and a diffeomorphism $f \colon X_0 \cong X_1$, there is a
canonical pull-back $(B, \svb)$-structure $f^*(\bar \nu_1)$ on $X_0$ that is given by composing all maps with $f$:
\[ \xymatrix{  & & B \ar[d]^-\svb\\
X_0 \ar[rru]^{f^*(\bar \nu_1)} \ar[r]_(0.45){f} & X_1 \ar[r]_-{\nu_1} \ar[ur]_-{\bar \nu_1} & BSO.} \]
\subsection*{Diffeomorphism of $(B, \svb)$-\dcedit{manifolds}:}  If
$(X_0, \bar \nu _0)$ and $(X_1, \bar \nu _1)$ are $(B, \svb)$-manifolds, a 
{\em $(B, \svb)$-diffeomorphism}
\[ f \colon (X_0, \bar \nu _0) \cong (X_1, \bar \nu _1) \]
is a diffeomorphism $f \colon X_0 \to X_1$ such that $f^*(\bar \nu _1)$
and $\bar \nu _0$ define equivalent $(B, \svb)$-structures on $X_0$.

\subsection*{\dcedit{Changing} orientations of $(B, \svb)$-manifolds:} 
A $(B, \svb)$-structure $\bar \nu$ on $X$ defines a
canonical $(B, \svb)$-structure on $X \times [0, 1]$ via pull-back under the projection to $X$, denoted
$\pi^*(\bar \nu)$.  If $\pi^*(\bar \nu)_i : = \pi^*(\bar \nu)|_{X \times \{ i \}}$, $i = 0, 1$, 
denotes the restriction of $\pi^*(\bar \nu)$ to each end of $X \times [0, 1]$, then 
$\bar \nu = \pi^*(\bar \nu)_1$ and
\[ -\bar \nu : = (\pi^*\bar \nu)_0 = \pi^*(\bar \nu|_{X \times \{ 0 \}})  \]
is the $(B, \svb)$-structure defined on the other end of $X \times [0, 1]$ via 
$\pi^*(\bar \nu)$.  

\dcedit{
\subsection*{Surgery on $(B, \svb)$-manifolds}
Let $(X, \bar \nu)$ be a compact $n$-dimensional $(B, \svb)$-manifold, possibly with boundary, 
and let $h_{k{+}1} := D^{k{+}1} \times D^{n-k}$ denote an $(n+1)$-dimensional $(k{+}1)$-handle.
For a {\em $(B, \svb)$ $k$-surgery} on $(X, \bar \nu)$ we require the following data:
\begin{enumerate}
\item An embedding $\phi \colon S^k \times D^{n-k} \to \textup{int}(X)$ to the interior of $X$;
\item A $(B, \svb)$-structure $\bar \nu_{W_\phi}$ on the trace of surgery on $\phi$,
\[ W_\phi : = (X \times I) \cup_\phi h_{k{+}1},\]
% the trace of $\phi$, which restricts to $\bar \acs \times I$
which extends the natural $(B, \svb)$-structure 
$\bar \nu \times I$ on $X \times I \subset X_\phi$ induced by $\bar \nu$.
\end{enumerate}
The boundary of $W_\phi$ is the union $X \cup_{\del X} X_\phi$ where $X_\phi$
is the result of surgery on $\phi$.  The restriction of the $(B, \svb)$-structure
$\bar \nu_{W_\phi}$ to $X_\phi$ is a $(B, \svb)$-structure $\bar \nu_\phi$ on $X_\phi$,
giving the $(B, \svb)$-manifold $(X_\phi, \bar \nu_{\phi})$.
}

\dcedit{
\subsection*{Bordism of $(B, \svb)$-manifolds:} Suppose that
 $W$ is an $(n+1)$-manifold with boundary 
\jbedit{
$\del W = X_0 \cup_YX_1$, that is, $\partial W$ is the union of two compact $n$-manifolds $X_0$
  and $X_1$, both with boundary $Y$}. If $\bar \nu_W \colon W \to
B$ is a $(B, \svb)$-structure on $W$, then $\bar \nu_W$ restricts to
give \jbedit{$(B, \svb)$}-structures $\bar \nu_0 \colon X_0 \to B$ and
$\bar \nu _1 \colon X_1 \to B$.  In this case $(-X_0, -\bar \nu_0)$
and $(X_1, \bar \nu_1)$ are called $(B, \svb)$-bordant rel.~boundary.
If $Y = \phi$ is empty, this gives the defintion of $(B,
\svb)$-bordism of closed $(B, \svb)$-manifolds.  } \dcedit{
\begin{Example}[$(B, \svb)$-diffeomorphism implies $(B, \svb)$-bordism] \label{ex:Diffeomorphis_and_bordism}
If $f \colon (X_0, \nu_0) \cong (X_1, \nu_1)$ is a
$(B, \svb)$-diffeomorphism between closed manifolds then the $s$-cobordism
\[ (X_0 \times [0, 1]) \cup_f (X_1 \times [1, 2]) \]
admits the structure of a $(B, \svb)$-bordism between $(X_0, \bar \nu _0)$ and
$(X_1, \bar \nu _1)$.  In particular, closed $(B, \svb)$-diffeomorphic manifolds are
$(B, \svb)$-bordant.
\end{Example}
}
\dcedit{
\begin{Example}[$(B, \svb)$-bordism and $(B, \svb)$-surgery] \label{ex:Bordism_and_surgery}
By \cite[Theorem 1]{Milnor61} extended to the case of compact manifolds,
every $(B, \svb)$-bordism $(W, \bar \nu_W; X_0, X_1)$ is $(B, \svb)$-diffeomorphic to the trace of a finite sequence of $(B, \svb)$-surgeries on $(X_0, \bar \nu_0)$.
\end{Example}

The set of $(B, \svb)$-bordism classes of closed $n$-dimensional $(B, \svb)$-manifolds $(N, \bar \nu)$ form an
abelian group
\[ \Omega_n(B; \svb) := \{ [N, \bar \nu] \} \]
with addition  given by disjoint union and inverse given by $-[M, \bar \nu] = [-M, -\bar \nu]$.

\medskip
}
We give some examples of stable bundles $(B, \svb)$ which we shall use later.
\dcedit{
\begin{Example} \label{ex:Complex_bordism}
Consider $(B, \svb) = (BU, F)$ where, if $\pi_{SO} \colon ESO \to BSO$
is a model for the universal principal $SO$-bundle, we take $BU$ to be the space
$(ESO)/U$, with $U$ acting on $ESO$ via the inclusion $U
\hookrightarrow SO$, and $F$ to be the map induced by $\pi_{SO}$.  The
map $F \colon BU \to BO$ is a bundle map with fibre $SO/U$ and
corresponds to forgetting almost complex structures on the level of
classifying spaces.  $(BU, F)$-bordism is then just complex bordism so
that $\Omega_n(BU; F) = \Omega^U_n$.  In particular by \cite[Theorem
  p.117]{Stong}, we have that $\Omega_{2q{+}1}^U = 0$ and every odd-dimensional
stably complex manifold bounds a stably complex manifold.
\end{Example}
}
\dcedit{
\begin{Example} \label{ex:Connective_complex_bordism}
$(B, \svb) = (BU\an{k{+}1}, \pi_{k{+}1})$, where 
$\pi_{k{+}1} \colon BU\an{k{+}1} \to BU$ is the $k^{th}$ connective cover of $BU$: that is, the universal map such that
$\pi_i(BU\an{q{+}1}) = 0$ for $i \leq q$ and 
$(\pi_{k{+}1})_* \colon \pi_i(BU\an{k{+}1}) \cong \pi_i(BU)$ for $i \geq k{+}1$.
For example $(BU\an{4}, \pi_4) = (BSU, \pi_{SU})$, where $SU$ denotes the \jbedit{stable special unitary group.}  In this case $\Omega_*(BSU; \pi_{SU}) = \Omega^{SU}_*$ is
special unitary bordism: see \cite[Chapter X]{Stong}.
By \cite[p.\,248]{Stong}, $\Omega_9^{SU} {\cong \Z_2}$ and in Lemma \ref{lem:alpha_spheres} we see that
the non-zero element in this group is represented by an exotic sphere $\Sigma$ which does not
admit a Stein filling.

In general the groups $\Omega_*(BU\an{k{+}1}; \pi_{k{+}1})$ become harder to compute
as $k$ becomes larger: we investigate the groups $\Omega_{2k{+}1}(BU\an{k}; \pi_k)$ and 
$\Omega_{2k{+}1}(BU\an{k{+}1}; \pi_{k{+}1})$ further in \cite{BCS}.
%where $BSO\an{q{+}1}$ is the $q^{th}$
%connective cover of $BSO$, meaning that its homotopy groups vanish up to dimension $q$ and agree with those of $BSO$ otherwise. The map $\pi_{q{+}1} $ is the universal map inducing isomorphisms on homotopy groups in degrees $i \ge q{+}1$. A $(BSO\an{q{+}1}, \pi_{q{+}1})$-bordism is then just an ordinary bordism $W$ with the additional constraint that $W$ is stably parallelisable over the $q$-skeleton. \dccomm{wrong - correct.}
\end{Example}
}

\dcedit{
\subsection*{Surgery below the middle dimension}
For our purposes, the utility of the additional normal structure on $(B, \svb)$-manifolds 
stems from the following result of Kreck, which we re-state with minor modifications:
% which says that one can always perform surgery 
%on the interior of $(B, \svb)$-manifold $(X, \bar \nu)$ to obtain a new $(B, \svb)$-manifold
%$(X', \bar \nu')$ where  the map $\bar \nu$ becomes an equivalence on homotopy groups up to the middle dimension. 
note that $[\frac{n}{2}]$ denotes the greatest integer less than or equal to $n/2$.

\begin{Proposition}[{\cite[Proposition 4]{Kreck99}}]\label{prop:Kreck_surgery}
Let $\mu \colon B \to BSO$ be a fibration and assume that $B$ is connected and has the homotopy type of a $CW$-complex with finite $[\frac{n}{2}]$-skeleton. Let $\bar \nu$ be a normal \jbedit{$(B, \svb)$}-structure on an $n$-dimensional compact manifold $X$. Then, if $n \ge 4$, by a finite sequence of $(B, \svb)$-surgeries $(X, \bar \nu)$ can be replaced by $(X',\bar \nu')$ so that $\bar \nu'$ is an $[\frac{n}{2}]$-equivalence.  In particular, $(X',\bar \nu')$ is again a normal $(B, \svb)$-manifold, i.e. the following diagram commutes:
\[ \xymatrix{  & B \ar[d]^-\svb\\
X' \ar[r]^-{\nu'} \ar[ur]^-{\bar \nu'} & BSO.} \]
\end{Proposition}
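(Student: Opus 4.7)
The plan is to argue by induction on $k$, showing that for each $0 \le k \le [\tfrac{n}{2}] - 1$ a finite sequence of $(B, \svb)$ $k$-surgeries can convert a $(B, \svb)$-manifold on which $\bar \nu$ is a $k$-equivalence into one on which $\bar \nu$ is a $(k{+}1)$-equivalence. The base case is essentially free: since $B$ is connected, $0$-surgery (internal connected sums) suffices to make $X$ connected and hence $\bar \nu$ a $0$-equivalence. Iterating the inductive step up through $k = [\tfrac{n}{2}] - 1$ then yields the required $[\tfrac{n}{2}]$-equivalence.

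For the inductive step, assume $\bar \nu$ is already a $k$-equivalence. The obstructions to improving this to a $(k{+}1)$-equivalence lie in the kernel of $\bar \nu_{*} \colon \pi_k(X) \to \pi_k(B)$, or equivalently in the relative homotopy group $\pi_{k{+}1}(M_{\bar \nu}, X)$ of the mapping cylinder pair. A relative Hurewicz and cellular compactness argument (using the finiteness of the $[\tfrac{n}{2}]$-skeleton of $B$ together with a CW-structure on the compact manifold $X$) shows this group is finitely generated, so it suffices to kill a finite set of generators one at a time by $k$-surgeries.

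Each generator is represented by a map of pairs $(D^{k{+}1}, S^k) \to (B, X)$, that is, a map $f \colon S^k \to X$ together with a null-homotopy $H$ of $\bar \nu \circ f$ in $B$. Since $2k{+}1 \le n{-}1$, general position promotes $f$ to a smooth embedding $\phi_0 \colon S^k \hookrightarrow \textup{int}(X)$. Because $\bar \nu$ lifts the stable normal Gauss map, $H$ determines a stable trivialization of $\phi_0^{*}\nu$ and hence of the stable normal bundle of $\phi_0(S^k)$ in $X$. As the unstable rank $n{-}k > k$ places this bundle in the stable range, standard obstruction theory lifts the stable trivialization to a genuine framing, producing the required embedding $\phi \colon S^k \times D^{n-k} \hookrightarrow \textup{int}(X)$.

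The main subtlety --- and the reason the normal structure $(B, \svb)$ is essential --- is extending the canonical $(B, \svb)$-structure $\bar \nu \times \textup{id}$ on $X \times I$ across the attached handle $h_{k{+}1} = D^{k{+}1} \times D^{n-k}$ to a $(B, \svb)$-structure on the full trace $W_\phi$. Since $h_{k{+}1}$ deformation retracts onto its core $D^{k{+}1}$, this reduces to extending $\bar \nu \circ \phi|_{S^k \times \{0\}}$ over $D^{k{+}1}$ to a map into $B$ compatibly with the classifying map of the stable normal bundle of $W_\phi$; the null-homotopy $H$ built into the chosen generator provides exactly such an extension. Restricting the resulting $\bar \nu_{W_\phi}$ to the opposite end of $W_\phi$ yields the new $(B, \svb)$-manifold $(X_\phi, \bar \nu_\phi)$, in which the chosen generator has been killed. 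Iterating over the finite set of generators and then over $k$ yields the required $[\tfrac{n}{2}]$-equivalence, and the commutative triangle in the conclusion is automatic since a $(B, \svb)$-structure is by definition such a lift.
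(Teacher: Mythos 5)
Your proposal follows essentially the same inductive scheme as the paper's proof: fix connectivity degree by degree, identify the relevant obstruction group as the relative homotopy group $\pi_{k+1}(B,X)$ (after converting $\bar\nu$ into an inclusion via a mapping cylinder), show it is finitely generated by a relative Hurewicz argument, and kill generators by framed surgeries supported by the $(B,\svb)$-structure. The geometric steps (general position embedding, triviality of the normal bundle in the stable range, extension of the $(B,\svb)$-structure over the core of the handle via the chosen null-homotopy) are all correct and match the paper.

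However, there is a genuine gap at the end of your inductive step. You write that "the chosen generator has been killed" in $(X_\phi, \bar\nu_\phi)$ and that iterating finishes the argument, but you never argue that the surgery (a) preserves the already-established vanishing $\pi_j(B,X')=0$ for $j\le k$, and (b) actually reduces the set of generators of $\pi_{k+1}(B,X')$ to $\{(i_X)_*(x_2),\dots,(i_X)_*(x_m)\}$, rather than potentially creating new elements. The paper handles this via the two handle decompositions of the trace $W_\phi$: the decomposition $W_\phi \cong (X\times I)\cup h_{k+1}$ shows $\pi_{k+1}(B,W_\phi)$ is generated by the images of $x_2,\dots,x_m$; the opposite decomposition $W_\phi \cong (X'\times I)\cup h_{n-k}$, together with $n-k>[\tfrac n2]$, shows $\pi_j(B,X')\cong\pi_j(B,W_\phi)$ for all $j\le[\tfrac n2]$. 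This dimension count is exactly where the constraint $k\le[\tfrac n2]-1$ earns its keep, and without it the induction does not close. A second, smaller point: your phrase "finitely generated" needs to mean finitely generated over $\Z[\pi_1]$, not over $\Z$, since $\pi_{k+1}(B,X)$ can be infinitely generated as an abelian group; this in turn requires first arranging that $\bar\nu_*$ is an isomorphism on $\pi_1$ (the paper does $0$- and then $1$-surgeries for this) before the $\Z\pi$-module structure and the Hurewicz/chain-complex finiteness argument can be invoked. The paper's Remark 2.8 makes clear that precisely this finite-generation-over-$\Z\pi$ point is where Kreck's original published argument had a gap, so it deserves to be stated explicitly.
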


\begin{Remark} \label{rem:Kreck_error}
We point out that the proof of \cite[Proposition 4]{Kreck99} contains
an error.  It uses \cite[Lemma 3]{Kreck99} which relies on the claim
that the higher homotopy groups $\pi_i(B)$ are finitely generated over
the group ring $\Z\pi$, where $\pi = \pi_1(B)$.  This is not true in
general: see e.g.~\cite[p.\,423]{Hatcher}.  In the proof of
\cite[Proposition 3.1]{BCSS2}, which is a modified and specialised
version of \cite[Proposition 4]{Kreck99}, we make the same error.
However, as we explain below, it is a simple matter to correct the
proof using standard methods for surgery below the middle dimension as
in \cite[Lemma 3.55]{Lueck}; see also \cite[Theorem 1.2]{Wall99}.
\end{Remark}
}

\dcedit{
\begin{proof}[Outline of the proof of Proposition \ref{prop:Kreck_surgery}]
We work inductively making the maps $\bar \nu \colon X \to B$ more and more connected.
By performing zero surgeries we may assume that $X$ is connected and so $\bar \nu$ is 
an isomorphism on $\pi_0$.  Since $\pi_1(B)$ is finitely generated, further zero-surgeries
allow us to make $\bar \nu$ surjective on $\pi_1$.
We then introduce a finite number of relations by doing surgery along a finite collection of 
embedded curves ($n \ge 4$) to obtain an isomorphism of fundamental groups: see \cite[p.\,718]{Kreck99}. 
Hence, we may assume for $1 \leq i < [\frac{n}{2}]$, that $\bar \nu$ is an isomorphism on $\pi_{j}$
for $j \leq i$.
From the long exact homotopy sequence,
\[ \dots \to \pi_j(X) \xra{\bar \nu_*} \pi_j(B) \to \pi_j(B, X) \to \pi_{j-1}(X) \to \dots, \]
where we regard $\bar \nu$ as an inclusion, it follows that $\pi_j(B, X) = 0$ for $j \leq i$.
The assumption that $B$ has the homotopy type of a 
$CW$-complex with finite $[\frac{n}{2}]$-skeleton means that we can apply
\cite[Lemma 3.55]{Lueck} which states that $\pi_{i+1}(B, X)$ 
is finitely generated over $\Z\pi$: Here is a short sketch of the proof.
Since the pair $(B, X)$ is $i$-connected, the relative Hurewicz Theorem states that 
$\pi_{i+1} (B, X) \cong H_{i+1} (B, X; \Z\pi)$.  Since 
$H_{i+1}(B, X; \Z\pi)$ is the first non-trivial homology group of a chain complex 
of finitely generated free $\Z\pi$-modules, it is finitely generated.

We choose $\{x_1, \dots, x_k\}$, a set of generators for $\pi_{i+1}(B, X)$.  The class $x_1$
is represented by a commutative diagram
\[ \xymatrix{ S^i \ar[d] \ar[r]^{\del \wh x_1} & X \ar[d]^{\bar \nu} \\
D^{i+1} \ar[r]^{\wh x_1} & B. } \]
For dimensional reasons we can assume that 
$\del \wh x_1 \colon S^i \to X$ is an embedding.
For any $i$ the stable tangent bundle $\tau_{S^i}$ 
is stably trivial and $\tau_X|_{\del \wh x_1(S^i)}$ is pulled back
from $B$ along a homotopically trivial map, hence
$\tau_X|_{\del \wh x_1(S^i)} = \nu_{\del \wh x_1} \oplus \tau_{S^i}$ implies that  the normal bundle, 
$\nu_{\del \wh x_1}$, of $\del \wh x_1(S^i)$ in $X$ is stably trivial. 
Since the rank of $\nu_{\del \wh x_1}$ is greater than $i$,
it follows that $\nu_{\del \wh x_1}$ is trivial and so $\del \wh x_1$ extends
to an embedding $\phi_1 \colon D^{n-i} \times S^i \to X$.  Moreover, the map
$\wh x_1$ gives the extra data to perform a $(B, \svb)$-surgery on $\phi_1$:
see \cite[Theorem 3.59]{Lueck}.
Let $W = (X \times I) \cup h_{i+1}$ be the trace of such a surgery, with normal map
$\bar \nu_W \colon W \to B$.  The relative group $\pi_{i+1}(W, X)$
is a rank one free $\Z\pi$-module whose generator is mapped to $x_1$ under the 
map $\pi_{i+1}(W, X) \to \pi_{i+1}(B, W)$ induced by the inclusion
$i_X \colon X \to W$.  The long exact sequence
\[ \dots \to \pi_{i+1}(W, X) \to \pi_{i+1}(B, X) \to \pi_{i+1}(B, W) \to \pi_i(X, W) \to \dots
 \]
shows that $\pi_{i+1}(B, W)$ is generated by $\{(i_X)_*(x_2), \dots, (i_X)_*(x_k)\}$.
If $(X', \bar \nu') \subset (W, \bar \nu_W)$ denotes the outcome of the $(B, \svb)$-surgery
on $\phi_1$, then $W \cong (X' \times I) \cup h^{n-i}$ and since 
$i < [\frac{n}{2}]$, $n-i > [\frac{n}{2}]$.  From the long exact sequence
\[ \dots \to \pi_{j}(W, X') \to \pi_{j}(B, X') \to \pi_{j}(B, W) \to \pi_{j-1}(X', W) \to \dots~,\]
we see that $\pi_{j}(B, X') \cong \pi_{j}(B, W)$ for $j \leq [\frac{n}{2}]$ 
and so, by a finite sequence of $(B, \svb)$-surgeries
we can achieve that the relative groups $\pi_{j}(B, X')$ vanish for $j \leq i{+}1$.
Proceeding by induction on $i$, by a finite sequence of $(B, \svb)$-surgeries
we can achieve that the relative groups $\pi_{j}(B, X')$ vanish for $j \leq [\frac{n}{2}]$; i.e.~
$\bar \nu'$ is a $[\frac{n}{2}]$-equivalence.
\end{proof}
}

\subsection{Stable complex structures} \label{subsec:cpx}
%%%%%%%%%%%%%%%%%%%%%%%%%%%%%%%%%%%%%%%%%%%%%%%%%%%%%%%%%%%%%%%%%%%%%%%%%%%%%%%%
An example of $(B, \svb)$-manifolds of particular interest in this paper
(recall Example \ref{ex:Complex_bordism}) is given by
\[ (B, \svb) = (BU, F) \]
where $F \colon BU \to BO$ is the canonical forgetful map between
classifying spaces.  A $(BU, F)$-manifold is nothing but a stably
complex manifold.  Notice that an almost complex structure $\acxs$ on a
$2q$-manifold $X$ (that is, a reduction of the structure group of the 
tangent bundle of $X$ from $SO(2q)$ to
$U(q)$) naturally induces a stable complex structure on $\tau_X$, 
the stable tangent bundle of $X$.
As there is a canonical bundle isomorphism,
\[ \tau_X \oplus \nu_X \cong \varepsilon, \]
where $\nu_X$ is the stable normal bundle of $X$ and $\varepsilon$
denotes the trivial stable bundle, a stable complex structure
on $\tau_X$ induces a stable complex structure on $\nu_X$:
choose the unique stable complex structure on $\nu_X$ so that the sum
with the given stable complex structure on $\tau_X$ is the trivial
stable complex structure on $\varepsilon.$
We shall denote the stable normal complex structure associated to $(X, \acxs)$ by 
$S \acxs$ or sometimes $\scxs_X$.

As in the even-dimensional case, an almost contact
structure $\acs$ on a closed $(2q{+}1)$-manifold $M$ (that is, the reduction of the
structure group from $SO(2q{+}1)$ to $U(q)$) induces a stable
complex structure $S\acs = \scxs _M$ on the stable normal bundle of $M$. 
(We will also call the stabilized structures \emph{complex} rather than 
\emph{contact} in the odd-dimensional case.)  Since stable tangential complex and stable normal complex
structures determine each other, we will focus on the normal picture
(although in the applications we will need results for the tangential
structures).

Building on the discussion of $(B, \svb)$-manifolds from Section 
\ref{subsec:surgery_settingA}, we now establish the basic notions in 
stable complex surgery which we shall use throughout this paper.
Let
\[ \eta \colon B \to BU \]
be a fibration, where, as before, $B$ has the homotopy type of a CW complex with a
finite number of cells in each dimension.
We regard $(B, \eta)$ as a stable complex vector bundle over $B$ with
underlying oriented bundle $F \circ \eta \colon B \to BSO$.
We shall be interested in the situation described by the following
commutative diagram:
\begin{equation} \label{eq:Beta}
	\xymatrix{  & B \ar[d]^-\eta \ar[dr]^{F \circ \eta}\\
X \ar[r]^-{\scxs} \ar[ur]^-{\bscxs} & BU \ar[r]^(0.4)F & BSO.} 
\end{equation}
Here $X$ is an oriented manifold with stable normal bundle $\nu = F \circ \scxs$,
$(X, \scxs)$ is a compatibly oriented stably complex manifold and $(X, \bscxs)$ is
a $(B, F \circ \eta)$-manifold.  Since $F$ is fixed, we shall usually call $(X, \bscxs)$
a $(B, \eta)$-manifold for short: this simply means that $(X, \bscxs)$ is a 
$(B, F \circ \eta)$-manifold.
\begin{Definition}[$\scxs$-compatible $(B, \eta)$-manifold] \label{def:zeta-compatible}
%%%%%%%%%%%%%%%%%%
In the situation of the commutative diagram \eqref{eq:Beta} above,
we say that $\bscxs \colon X \to B$ is a $\scxs$-compatible $(B, \eta)$-manifold;
i.e., $(X, \bscxs)$ is a $(B, F \circ \eta)$-manifold with underlying
stably complex manifold $(X, \scxs)$.
\end{Definition}

It follows from the definitions that $(-X, -\bscxs)$ is a 
$(-\scxs)$-compatible $(B, \eta)$-manifold and that a $(B, F \circ \eta)$-diffeomorphism
$ f \colon (X_0, \bscxs_0) \cong (X_1, \bscxs_1) $
is also a stably complex diffeomorphism $f \colon (X_0, \eta \circ \scxs_0) \cong (X_1,
\eta \circ \scxs_1)$.  
%Of fundamental importance in this work are the 
$(B, \eta)$-bordism groups are defined as follows.

\begin{Definition}[$(B, \eta)$-bordism] \label{def:Beta-bordism}
%%%%%%%%%%%%%%%%%%
We define
\[ \Omega_n(B; \eta) := \Omega_n(B; F \circ \eta) \]
to be the bordism group of $(B, F \circ \eta)$-bordism classes of 
closed $n$-dimensional $(B, F \circ \eta)$-manifolds as defined
in Section \ref{subsec:surgery_settingA}.
\end{Definition}
The following definition (as explained at the beginning of this section) is of
fundamental importance in our present discussion.
\begin{Definition}[Normal $k$-smoothing] \label{def:normal-k-smoothing}
A normal $k$-smoothing in $(B, \eta)$ is a normal $(B,
\eta)$-manifold $(X, \bscxs)$, where $\bscxs \colon X \to B$ is a
$(k{+}1)$-equivalence; i.e.~$\bscxs$ induces an isomorphism on
homotopy groups $\pi_i$ for $i \leq k$ and a surjection on
$\pi_{k{+}1}$.
\end{Definition}

For the purposes of understanding Stein fillings, we shall be
interested in the case where $\bscxs$ is a $k$-smoothing for certain
$k$.
%: recall that this means that $\bscxs \colon X \to B$ is a
%$(k{+}1)$-equivalence.
One may ask whether for some stably complex manifold $(X, \scxs)$ 
there are any $\scxs$-compatible normal $k$-smoothings $\bscxs \colon X \to B$ at all.  
In fact this is always the
case because the map $\scxs \colon X \to BU$ can be factorised (up to homotopy) as a
composition
\[ X \xra{~\bscxs~} B^k_\scxs \xra{~\eta^k_\scxs~} BU,    \]
where $\bscxs$ is a $(k{+}1)$-equivalence and $\eta^k_\scxs$ is a fibration.  
The space $B^k_\scxs$ and the maps $\bscxs$ and $\eta^k_\scxs$ make up the 
$k^{th}$ Postnikov factorisation of $\scxs$.  The existence of the $k^{th}$ Postnikov
factoriation is proven in \cite[Theorem 5.3.1]{Baues77} \dcedit{and \cite[Proposition 4.13]{Hatcher}}, 
its defining properties are identified in 
Definition \ref{def:complex-normal-k-type} below and we discuss some examples
in Section \ref{subsec:normal_k_types}.  In general, for any $k \geq 0$, a map $f \colon X \to Y$
between $CW$-complexes, has a $k^{th}$ Postnikov factorisation $f \simeq \eta^k_f \circ \bar f$
by maps $\bar f \colon X \to Y^k_f$ and $\eta^k \colon Y^k_f \to Y$.
Such factorisations 
are built by first converting $f$ into a fibration and then working
inductively so that there are fibrations $Y^k_f \to Y^{k-1}_f$ with
fibre $K(\pi_k(F), k)$ where $F$ is the homotopy fibre of $f \colon X \to Y$.

\begin{Definition}[Complex normal $k$-type] \label{def:complex-normal-k-type}
%%%%%%%%%%%%%%%%%%%%%
Let $(X, \scxs)$ be a stably complex manifold.  The complex normal
$k$-type of $(X, \scxs)$, denoted $(B^{k}_{\scxs}, \eta^k_\scxs)$, is
defined to be the fibre homotopy type of the fibration $\eta^k_\scxs$
in the following diagram:
\[ \xymatrix{  & B^{k}_\scxs \ar[d]^-{\eta^k_\scxs}\\
X \ar[r]^-{\scxs} \ar[ur]^(0.55){\bscxs} & BU.} \]
The fibration $\eta^k_\scxs$ is uniquely defined up to fibre homotopy
type by the following properties:
\begin{enumerate}
\item
the map $\bscxs$ is a $(k{+}1)$-equivalence,
\item
the map $\eta^k_\scxs$ is a $(k{+}1)$-coequivalence,
i.e.\,\,$(\eta^k_{\scxs})_* \colon \pi_j(B^{k}_\scxs) \to \pi_j(BU)$
is injective when $j = k{+}1$ and an isomorphism if $j > k{+}1$.
\end{enumerate}
\end{Definition}

We conclude this subsection by considering the role of the choice of
the normal $(q-1)$-smoothing $\bscxs \colon X \to B$ on the
bordism class $[X, \bscxs] \in \Omega_{2q{+}1}(B: \eta)$.  Our method is
to adapt the key point of the proof of \cite[Proposition 7.4]{Kreck85}
to the complex setting.
Given a stable complex vector bundle $\eta \colon B \to BU$, let
$\Aut(B, \eta)$ be the group of fibre homotopy classes of fibre
self-homotopy equivalences of $\eta$.  That is, $\Aut(B, \eta)$
consists of fibre homotopy equivalence classes of maps $\alpha \colon
B \simeq B$ which make the following diagram commute:
\[  \xymatrix{ B \ar[r]^\alpha \ar[d]^\eta & B \ar[d]^\eta\\
BU \ar[r]^{\rm Id} & BU.  } \]
The group $\Aut(B, \eta)$ acts on the set of $(B,
\eta)$-diffeomorphism classes of complex normal $k$-smoothings in $(B,
\eta)$ by mapping a complex normal $k$-smoothing $\bscxs \colon X \to
B$ to the complex normal $k$-smoothing $\alpha \circ \bscxs \colon X
\to B$.

\begin{Lemma}[cf.~{\cite[Proposition 7.4]{Kreck85}}]  \label{lem:Aut(B^k)}
%%%%%%%%
Suppose that $(X_0, \scxs_0)$ and $(X_1, \scxs_1)$ are stably complex 
manifolds and that for $i = 0, 1$, $\bscxs_i \colon X_i \to B^k_{\scxs_0}$ is a
$\scxs_i$-compatible normal $k$-smoothing in $(B^k_{\scxs_0}, \eta^k_{\scxs_0})$,
the complex normal $k$-type of $(X_0, \scxs_0)$.  If 
$f \colon (X_0, \scxs_0) \cong (X_1, \scxs_1)$ is a stably complex
diffeomorphism, then there is a fibre homotopy self-equivalence $\alpha \in
\Aut(B^k_{\scxs_0}, \eta^k_{\scxs_0})$ such that $f$ is a
$(B^k_{\scxs_0}, \eta^k_{\scxs_0})$-diffeomorphism from $(X_0, \alpha
\circ \bscxs_0)$ to $(X_1, \bscxs_1)$.
\end{Lemma}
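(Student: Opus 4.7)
The plan is to reduce the statement to the uniqueness of Postnikov factorisations. Since $f \colon (X_0, \scxs_0) \cong (X_1, \scxs_1)$ is a stably complex diffeomorphism, the composition $\scxs_1 \circ f \colon X_0 \to BU$ is homotopic to $\scxs_0$ as a classifying map for a stable complex structure on $\nu_{X_0}$. Consequently, the pull-back
\[ f^*(\bscxs_1) := \bscxs_1 \circ f \colon X_0 \lra B^k_{\scxs_0} \]
together with the chosen homotopy $\scxs_1 \circ f \simeq \scxs_0$ constitutes a second $\scxs_0$-compatible normal $k$-smoothing on $X_0$ with values in the fibration $\eta^k_{\scxs_0}$. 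Hence I obtain two $\scxs_0$-compatible normal $k$-smoothings of $X_0$ into the same target $(B^k_{\scxs_0}, \eta^k_{\scxs_0})$, namely $\bscxs_0$ and $f^*(\bscxs_1)$.

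Next, I observe that both of these lifts realise $(B^k_{\scxs_0}, \eta^k_{\scxs_0})$ as a $k^{\text{th}}$ Postnikov factorisation of $\scxs_0$ in the sense of Definition \ref{def:complex-normal-k-type}: the two maps $\bscxs_0$ and $f^*(\bscxs_1)$ are $(k{+}1)$-equivalences, while $\eta^k_{\scxs_0}$ is a fixed $(k{+}1)$-coequivalence. By the uniqueness of such factorisations up to fibre homotopy equivalence (\cite[Theorem~5.3.1]{Baues77}), there exists an $\alpha \in \Aut(B^k_{\scxs_0}, \eta^k_{\scxs_0})$ and a vertical homotopy $\alpha \circ \bscxs_0 \simeq f^*(\bscxs_1)$ as lifts of $\scxs_0$ over $\eta^k_{\scxs_0}$. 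This step is the main technical obstacle, and it is the content of Kreck's argument in \cite[Proposition 7.4]{Kreck85}: one produces $\alpha$ inductively along the Moore--Postnikov tower of $\eta^k_{\scxs_0}$, exploiting that its homotopy fibre has vanishing homotopy groups in degrees $\geq k{+}1$ while both $\bscxs_0$ and $f^*(\bscxs_1)$ are isomorphisms on $\pi_j$ for $j \leq k$ and surjections on $\pi_{k{+}1}$. The obstructions to building $\alpha$ stage by stage live in cohomology groups of $B^k_{\scxs_0}$ that vanish precisely because of the two $(k{+}1)$-equivalences being available on the source.

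To conclude, the homotopy $\alpha \circ \bscxs_0 \simeq f^*(\bscxs_1)$ covering the identity of $BU$ furnishes a $(B^k_{\scxs_0}, \eta^k_{\scxs_0})$-structure on $X_0 \times [0,1]$ restricting at the two ends to $\alpha \circ \bscxs_0$ and $f^*(\bscxs_1)$; this is by definition an equivalence of $(B^k_{\scxs_0}, \eta^k_{\scxs_0})$-structures on $X_0$. Recalling the definition of a $(B, \svb)$-diffeomorphism, this equivalence is exactly what is required to declare $f$ a $(B^k_{\scxs_0}, \eta^k_{\scxs_0})$-diffeomorphism from $(X_0, \alpha \circ \bscxs_0)$ to $(X_1, \bscxs_1)$, completing the proof.
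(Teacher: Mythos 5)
Your proposal follows essentially the same route as the paper's proof: pull back $\bscxs_1$ along $f$ to obtain a second $\scxs_0$-compatible normal $k$-smoothing of $X_0$, then invoke the uniqueness up to fibre homotopy equivalence of Postnikov factorisations (the paper cites \cite[Corollary~5.3.8]{Baues77} for this universal property, rather than the existence statement of Theorem~5.3.1) to produce $\alpha$, and read off the $(B^k_{\scxs_0},\eta^k_{\scxs_0})$-diffeomorphism from the resulting vertical homotopy. Your added sketch of the obstruction-theoretic induction along the Moore--Postnikov tower is a correct elaboration of what the Baues citation packages, not a different argument.
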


\begin{proof}
	The maps
	\[ \bscxs_0, ~ f^*(\bscxs_1) \colon X_0 \to B^k_{\scxs_0} \]
	determine two complex normal $k$-smoothings on $X_0$.  Now the
        universal properties of Postnikov stages of maps
        \cite[Corollary 5.3.8]{Baues77} ensure that there is a fibre
        homotopy equivalence $\alpha \colon (B^k_{\scxs_0},
        \eta^k_{\scxs_0}) \simeq (B^k_{\scxs_0}, \eta^k_{\scxs_0})$
        such that $\alpha \circ \bscxs_0$ and $f^*(\bscxs_1)$
        are equivalent $(B^k_{\scxs_0}, \eta^k_{\scxs_0})$-structures
        on $X_0$.  By definition, this means that $f$ is a
        $(B^k_{\scxs_0}, \eta^k_{\scxs_0})$-diffeomorphism from $(X_0,
        \alpha \circ \bscxs_0)$ to $(X_1, \bscxs_1)$.
\end{proof}

The following corollary is an important consequence of Lemma
\ref{lem:Aut(B^k)} which arises from the fact that the induced action
of $\Aut(B^k_{\scxs}, \eta^k_\scxs)$ on $\Omega_n(B^k_\scxs;
\eta^k_\scxs)$ is by group automorphisms.

\begin{Corollary}  \label{cor:Postnikov}
Let $(B^k_\scxs, \eta^k_\scxs)$ be the normal $k$-type of $(X,
\scxs)$.  If $\bscxs \colon X \to B^k_\scxs$ is a closed
$\scxs$-compatible normal $k$-smoothing such that $[X, \bscxs]= 0 \in
\Omega_n(B^k_\scxs; \eta^k_\scxs)$, then for all $\scxs$-compatible
normal $k$-smoothings $\hat \scxs \colon X \to B^k_\scxs$ we have that
$[X, \hat \scxs] = 0 \in \Omega_n(B^k_\scxs; \eta^k_\scxs)$.
\end{Corollary}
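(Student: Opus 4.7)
The plan is to reduce the corollary to Lemma~\ref{lem:Aut(B^k)} applied to the identity diffeomorphism, after checking that the induced action of $\Aut(B^k_\scxs, \eta^k_\scxs)$ on the bordism group is by group automorphisms.

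First, I would apply Lemma~\ref{lem:Aut(B^k)} to the identity diffeomorphism $\Id_X \colon (X, \scxs) \cong (X, \scxs)$, which is trivially a stably complex diffeomorphism from $(X,\scxs)$ to itself. Viewing $\bscxs$ and $\hat\scxs$ as the two given $\scxs$-compatible normal $k$-smoothings, the lemma produces a fibre homotopy self-equivalence $\alpha \in \Aut(B^k_\scxs, \eta^k_\scxs)$ such that $\Id_X$ is a $(B^k_\scxs, \eta^k_\scxs)$-diffeomorphism from $(X, \alpha \circ \bscxs)$ to $(X, \hat\scxs)$. Invoking Example~\ref{ex:Diffeomorphis_and_bordism}, this immediately gives the equality

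\[ [X, \alpha \circ \bscxs] = [X, \hat\scxs] \in \Omega_n(B^k_\scxs; \eta^k_\scxs). \]

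Next I would verify that post-composition with $\alpha$ induces an automorphism
\[ \alpha_\ast \colon \Omega_n(B^k_\scxs; \eta^k_\scxs) \lra \Omega_n(B^k_\scxs; \eta^k_\scxs), \qquad [N, \bar \nu] \mapsto [N, \alpha \circ \bar \nu]. \]
Since $\alpha$ covers the identity on $BU$ (and hence on $BSO$), the composition $\alpha \circ \bar \nu$ is again a $(B^k_\scxs, F\circ \eta^k_\scxs)$-structure on $N$, so $\alpha_\ast$ is well-defined on representatives. It respects disjoint union and is compatible with bordisms: if $(W, \bar \nu_W)$ is a $(B, \svb)$-bordism, then $(W, \alpha \circ \bar \nu_W)$ is also one. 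Choosing a fibre homotopy inverse $\alpha^{-1}$ gives an inverse to $\alpha_\ast$, so $\alpha_\ast$ is a group automorphism.

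Finally, combining the two steps: the assumption $[X, \bscxs]=0$ gives
\[ [X, \alpha \circ \bscxs] = \alpha_\ast[X, \bscxs] = 0, \]
and then the equality established above forces $[X, \hat\scxs]=0$, as required. There is no serious obstacle in this argument; the only point that needs care is checking well-definedness and bijectivity of $\alpha_\ast$ at the level of bordism classes, which is routine once one notes that $\alpha$ is a fibre homotopy equivalence over the identity of $BU$.
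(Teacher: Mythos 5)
Your argument is correct and matches the paper's proof in essence: both apply Lemma~\ref{lem:Aut(B^k)} to the identity diffeomorphism of $(X,\scxs)$ to obtain $\alpha$ with $(X,\hat\scxs)$ being $(B^k_\scxs,\eta^k_\scxs)$-diffeomorphic to $(X,\alpha\circ\bscxs)$, and then use that $\alpha$ acts by a group automorphism on $\Omega_n(B^k_\scxs;\eta^k_\scxs)$. You have merely spelled out, via Example~\ref{ex:Diffeomorphis_and_bordism} and the well-definedness check on $\alpha_\ast$, details the paper leaves implicit.
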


\begin{proof}
	Applying Lemma \ref{lem:Aut(B^k)} to $(X, \bscxs)$ and $(X,
        \hat \scxs)$ we deduce that there is a fibre homotopy equivalence
        $\alpha \colon (B^k_\scxs, \eta^k_\scxs) \simeq (B^k_\scxs,
        \eta^k_\scxs)$ such that $(X, \hat \scxs)$ is $(B^k_\scxs,
        \eta^k_\scxs)$-diffeomorphic to $(X, \alpha \circ \bscxs)$.
        Hence $[X, \hat \scxs] = \alpha_*([X, \bscxs]) = 0$.
\end{proof}

% subsection general-setting (end)
%%%%%%%%%%%%%%%%%%%%%%%%%%%%%%%%%%%%%%%%%%%%%%%%%%%

\subsection{Stable and unstable surgery} % (fold)
\label{subsec:stable_and_unstable_surgery}
%%%%%%%%%%%%%%%%%%%%%%%%%%%%%%%%%%%%%%%%%%%%%%%%%%%
Propagating contact structures over Weinstein handles
requires information about almost contact structures of $M$.
On the other hand, computations in $(B, \eta)$-bordism require
the use of the stable normal bundle of $M$.  In this subsection we
prove two lemmas which allow us to move between these two settings.

Recall that $h_{k{+}1} := D^{k{+}1} \times D^{n-k}$ is an $(n+1)$-dimensional $(k{+}1)$-handle.
Let $(M, \acs)$ be a closed $(2q{+}1)$-dimensional almost contact manifold
and set $n := 2q{+}1$.  For an {\em almost complex $k$-surgery} on 
$(M, \acs)$ we require the following data:
\begin{enumerate}
\item An embedding $\phi \colon S^k \times D^{n-k} \to M$;
\item An almost complex structure $\acxs$ on 
\[ W_\phi : = (M \times I) \cup_\phi h_{k{+}1},\]
% the trace of $\phi$, which restricts to $\bar \acs \times I$
extending the natural almost complex structure $\varphi \times I$ on $M \times I \subset W_\phi$ induced by $\varphi$.
\end{enumerate}
The result of this surgery is the other boundary component of $W_\phi$, denoted
$M_\phi$.  It is an almost contact manifold with almost contact structure
$\acs_\phi := \acxs|_{M_\phi}$.

\begin{Definition}[Almost complex surgery] \label{def:almost_complex_surgery}
%%%%%%%%%%%%%%%%%
In the situation above, we shall say that the almost contact manifold
$(M_\phi, \acs_\phi)$ is obtained from $(M, \acs)$ by a $k$-dimensional almost complex surgery.
\end{Definition}

When we work with stably complex manifolds, we have the analogous
situation, where almost contact structures and almost complex
structures on the tangent bundle are replaced first by stable complex
structures on the tangent bundle and then by stable complex structures
on the normal bundle.  Thus to perform stable complex $k$-surgery on a
stably complex $n$-manifold $(M, \scxs)$, we require an embedding $\phi
\colon S^k \times D^{n-k} \to M$ along with an extension of the stable
complex structure $\scxs$ to a stable complex structure on the trace
of the surgery
$ W_\phi = (M \times I) \cup_\phi h_{k{+}1}.$
\dcedit{
In this case $M_\phi$ inherits the stable complex structure $\scxs_\phi : = \scxs|_{M_\phi}$,
and we shall say that $(M_\phi, \scxs_\phi)$ is obtained from $(M, \scxs)$ via stable complex surgery.}

\dcedit{
\begin{Notation} \label{not:acs_and_scxs}
Given an almost contact manifold $(M, \acs)$, let  $(M, \scxs)$
denote the stably complex manifold defined by $\acs$.
\end{Notation}
}

\begin{Lemma} \label{lem:destabilising}
Let $(M, \acs)$ be a $(2q{+}1)$-dimensional almost contact
manifold and suppose $(W, \scxs_W; M, M_\phi)$ is the trace of a stable
complex $k$-surgery on $(M, \scxs)$ with $k \leq 2q$.
Then there is an almost complex structure
$\acxs$ on $W$ with $S \acxs = \scxs_W$ and which restricts to
$\acs \times I$ on $M \times I$.
\end{Lemma}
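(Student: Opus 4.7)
The plan is to interpret the lemma as an obstruction-theoretic extension problem across the single handle $h_{k{+}1}$ and to verify that the sole relevant obstruction, which lies in a homotopy group of a certain homotopy fiber, vanishes. For the reduction: $W = (M\times I) \cup_\phi h_{k{+}1}$ and $h_{k{+}1} = D^{k{+}1}\times D^{2q{+}1{-}k}$ deformation retracts onto its core $D^{k{+}1}$, so extending the given $\acs \times I$ on $M\times I$ to an almost complex structure $\acxs$ on $W$ whose stabilisation is $\scxs_W$ reduces, up to homotopy, to an extension across the $(k{+}1)$-disk relative to the attaching sphere $S^k$. Over the contractible handle both $TW$ and $\scxs_W|_{h_{k{+}1}}$ are canonically trivial up to homotopy, so after trivialisation almost complex structures on the trivial rank-$(2q{+}2)$ bundle correspond to maps into $SO(2q{+}2)/U(q{+}1)$, stable complex structures correspond to maps into $SO/U$, and the stabilisation is induced by the forgetful map $s \colon SO(2q{+}2)/U(q{+}1) \to SO/U$. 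The almost complex structures destabilising the fixed stable structure are thus classified (up to homotopy) by the homotopy fiber $F$ of $s$, and the obstruction to the extension is the class in $\pi_k(F)$ of the given map $S^k \to F$ coming from $\acs \times I$ on the attaching region.

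The main step, and the expected obstacle, is the connectivity computation $\pi_i(F) = 0$ for $i \leq 2q$. I would achieve this by comparing the long exact sequences of the two fibrations
\[ U(q{+}1) \to SO(2q{+}2) \to SO(2q{+}2)/U(q{+}1) \quad \text{and} \quad U \to SO \to SO/U \]
via the five-lemma. The stability ranges $\pi_i(U(q{+}1)) \cong \pi_i(U)$ for $i \leq 2q{+}1$ and $\pi_i(SO(2q{+}2)) \cong \pi_i(SO)$ for $i \leq 2q$ (the latter being surjective at $i = 2q{+}1$) imply, via the five-lemma, that $s$ induces an isomorphism on $\pi_i$ for $i \leq 2q$ and a surjection on $\pi_{2q{+}1}$. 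The long exact sequence of $F \to SO(2q{+}2)/U(q{+}1) \to SO/U$ then yields $\pi_i(F) = 0$ for $i \leq 2q$. Since $k \leq 2q$ by hypothesis, the obstruction vanishes and the desired extension $\acxs$ exists.

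A final bookkeeping subtlety is ensuring that $\acxs$ lives on $TW$ itself (not merely on a stably isomorphic rank-$(2q{+}2)$ bundle) and agrees with $\acs \times I$ exactly on $M \times I$, rather than only up to homotopy. This is forced by carrying out the extension using the fixed trivialisation of $TW|_{h_{k{+}1}}$ over the contractible handle and patching to $\acs \times I$ on the attaching region $\phi(S^k \times D^{2q{+}1{-}k})$, so that the resulting section is tautologically an almost complex structure on $TW$ extending the prescribed one.
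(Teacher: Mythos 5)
Your proof is correct and follows essentially the same route as the paper: reduce to a relative lifting/extension problem whose single obstruction lies (after collapsing the handle to its core $(k{+}1)$-disc) in a homotopy group of the homotopy fibre of the stabilisation map $SO(2q{+}2)/U(q{+}1)\to SO/U$, and then show that this fibre is $2q$-connected. The only departure is that you prove the needed connectivity of the stabilisation map directly, by comparing the fibrations $U(q{+}1)\to SO(2q{+}2)\to SO(2q{+}2)/U(q{+}1)$ and $U\to SO\to SO/U$ via the five lemma, whereas the paper simply cites \cite[p.\,432]{Gray59} for this fact; both give the same range $k\le 2q$.
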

(Notice that the above lemma is stated for $k\leq 2q$ --- in our
applications, however, we will use the statement only in the range
$k\leq q{+}1$.)

\begin{proof}
	First we notice that the stable normal complex structure on
        $W$ can be converted to a stable tangential complex structure
        on $W$ which stabilies $\acs \times I$ when restricted to $M
        \times I$.  Our problem is to reduce the structure
        group of the tangent bundle
        of $W$ to $U(q{+}1)$, and we must do this relative to the chosen
        reduction corresponding to $\acs \times I$ on $M \times I$.
        We encounter the unstable lifting problem which maps to the
        stable lifting problem:
	\[ \xymatrix{ M \times I \ar[d] \ar[r]^{\acs \times I} & BU(q) \ar[r] & BU(q{+}1) \ar[d] \ar[r] & BU \ar[d] \\
	W \ar@{.>}[urr] \ar[rr]^(0.45){\tau_W} & & BSO(2q{+}2) \ar[r] & BSO,   }  \]
	where $\tau_W$ classifies the tangent bundle of $W$.	
	The lifting obstructions for these problems lie in the groups
	\[ H^{k{+}1}(W, M; \pi_{k}(SO(2q{+}2)/U(q{+}1))) \quad \text{and} \quad 
	H^{k{+}1}(W, M; \pi_{k}(SO/U)), \]
        and the unstable lifting obstruction maps to the stable
        lifting obstruction under the coefficient homomorphism $S_*
        \colon \pi_k(SO(2q{+}2)/U(q{+}1)) \to \pi_k(SO/U)$.  The
        map $S_*$ is an isomorphism for $k \leq 2q$ by
        \cite[p.\,432]{Gray59}.  Hence the vanishing of the stable
        obstruction ensures the vanishing of the unstable obstruction.
        It follows therefore that there is an almost complex structure
$\acxs$ on $W$ compatible with $(M \times I, \acs \times I)$.
\end{proof}
	Note that in the setting of Lemma \ref{lem:destabilising} there may
        be several homotopy classes of almost contact structures $\acs'$ on
        $M_\phi$ such that $S\acs'$ is homotopic to $\scxs_\phi$.  The almost
        complex structure $\acxs$ will induce one such structure
        $\acxs|_{M_\phi}$ on $M_\phi$.  To obtain an almost complex
        bordism $(W, \acxs; (M, \acs), (M_\phi, \acs'))$ for a
        specific almost contact structure $\acs'$ we may need to
find an alternative bordism. 

\dcedit{		
\begin{Lemma} \label{lem:stable_to_unstable}
Let $(M, \scxs)$ be a closed connected stably complex $(2q{+}1)$-manifold.
\begin{enumerate}
\item \label{lem:stable_to_unstable:onto}
There is an almost contact structure $\acs$ such that $\scxs = S \acs$.
\item \label{lem:stable_to_unstable:kernel}
Suppose that two almost contact structures $\acs$ and
$\acs '$ are such that $S \acs = S\acs' = \scxs$.
\jbedit{Then there} is an almost contact structure $\acs _d$ on the sphere $S^{2q{+}1}$ 
such that:
\begin{enumerate}
\item $(M,\acs )\# (S^{2q{+}1} , \acs _d)$ and $(M, \acs ')$ are equivalent 
as almost contact manifolds,
\item $S\acs_d = S\acs_0$ where $\acs_0$
is the standard almost contact structure on $S^{2q{+}1}$,
\item $(S^{2q{+}1}, \acs _d)$ bounds an almost complex $(2q{+}2)$-manifold with a handle 
decomposition of handles of indices $\leq q{+}1$.
\end{enumerate}
\end{enumerate}
\end{Lemma}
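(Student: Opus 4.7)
My plan for part (1) is obstruction theory. An almost contact structure on $M^{2q+1}$ is a lift of $\tau_M \colon M \to BSO(2q{+}1)$ through the fibration $BU(q) \to BSO(2q{+}1)$ with fibre $SO(2q{+}1)/U(q)$, while the given $\zeta$ is a lift of the stable tangent map through $BU \to BSO$ with fibre $SO/U$. The unstable obstructions live in $H^{i+1}(M;\pi_i(SO(2q{+}1)/U(q)))$ for $i \leq 2q$ and the stable obstructions in $H^{i+1}(M;\pi_i(SO/U))$; the latter vanish by hypothesis. Since the stabilisation map on fibres $\pi_i(SO(2q{+}1)/U(q)) \to \pi_i(SO/U)$ is an isomorphism for $i \leq 2q$ (the result of Gray cited in the proof of Lemma~\ref{lem:destabilising}), the unstable obstructions also vanish, yielding $\varphi$ with $S\varphi = \zeta$.

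For part (2)(a), I would apply the same comparison to $\varphi$ and $\varphi'$: since $S\varphi = S\varphi'$, they are homotopic as $U(q)$-reductions over the $2q$-skeleton of $M$. After adjusting $\varphi'$ by such a homotopy I may assume $\varphi = \varphi'$ on the complement of a small disc $D \subset M$ absorbing the top cell. Then $\varphi|_D$ and $\varphi'|_D$ agree on $\partial D$ and glue to an almost contact structure $\varphi_d$ on $S^{2q{+}1} = D \cup_{\partial} D$, realising $(M,\varphi') \cong (M,\varphi) \# (S^{2q{+}1},\varphi_d)$. Part (b) follows by stabilising the connect-sum equation: since $S\varphi = S\varphi'$ and connect sum is compatible with stabilisation, $S\varphi_d$ acts as the identity for stable complex connect sum on $S^{2q{+}1}$, whence $S\varphi_d = S\varphi_0$.

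For (c) I argue as follows. Since $\Omega^U_{2q{+}1} = 0$ by Example~\ref{ex:Complex_bordism}, the stably complex manifold $(S^{2q{+}1}, S\varphi_d)$ bounds a stably complex manifold $(W, \zeta_W)$. I would adapt Proposition~\ref{prop:Kreck_surgery} to the setting with non-empty boundary: performing $(B,\eta)$-surgeries in the interior of $W$ makes the normal map $\bar\zeta_W$ a $(q{+}1)$-equivalence. Wall's handle-trading theorem (the principle underlying the forthcoming Filling Lemma~\ref{lem:topological_filling}) then gives $W$ a handle decomposition in which every handle has index $\leq q{+}1$. Each such handle corresponds to a stable complex $k$-surgery with $k \leq q \leq 2q$, so Lemma~\ref{lem:destabilising} applies inductively along the handle decomposition and upgrades the stable complex structure on $W$ to an almost complex structure $J$ restricting to $\varphi_d$ on $\partial W = S^{2q{+}1}$.

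The main obstacle will be the rigorous execution of (c): the extension of Proposition~\ref{prop:Kreck_surgery} to manifolds with boundary (making $\bar\zeta_W$ a $(q{+}1)$-equivalence rel $\partial W$), and verifying that the Wall/Filling handle-trading can be arranged entirely within the category of stable complex surgeries, so that Lemma~\ref{lem:destabilising} applies at every stage of the inductive destabilisation.
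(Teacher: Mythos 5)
Your treatment of part~(1) and parts (2)(a),(b) is essentially correct and matches the paper's idea: both arguments boil down to the fact that the stabilisation $F_{2q{+}2} = SO(2q{+}2)/U(q{+}1) \to SO/U$ is a $(2q{+}1)$-equivalence (Gray's fibre bundle), together with standard obstruction theory on a $(2q{+}1)$-dimensional complex. The paper routes this through Sato's observation that almost contact structures on $M$ correspond bijectively to almost complex structures on $M \times I$, which cleans up the book-keeping, but the content is the same and your version would work.

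Part~(2)(c) has a genuine gap: the argument is circular. Lemma~\ref{lem:destabilising}, applied inductively to a handle decomposition of a stably complex nullbordism $W$ of $(S^{2q{+}1}, S\acs_d)$, does \emph{not} give an almost complex structure on $W$ restricting to $\acs_d$ on $\partial W$. It only produces an almost complex structure $J$ with $SJ|_{\del W} = S\acs_d$, i.e.\ it yields \emph{some} almost contact structure in the correct stable class. This is precisely the point flagged in the paper in the paragraph after Lemma~\ref{lem:destabilising}, and it is exactly why the Filling Lemma~\ref{lem:topological_filling} (and hence any adaptation of Proposition~\ref{prop:Kreck_surgery} and Wall's handle-trading) invokes Lemma~\ref{lem:stable_to_unstable}(2)(c) as an input to correct the boundary almost contact structure. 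So your proposed proof of (2)(c) secretly assumes (2)(c). To close the gap you would need a way to realise each element of $A_{2q{+}1} = \ker\bigl(\pi_{2q{+}1}(SO(2q{+}2)/U(q{+}1)) \to \pi_{2q{+}1}(SO/U)\bigr)$ as the boundary of an explicit almost complex manifold with low-index handles. The paper does this by concrete geometric constructions rather than abstract surgery: for $q$ even it cites Sato's realisation (building on Morita) of all such almost contact structures by Stein fillable contact structures on Brieskorn spheres; for $q$ odd it removes a ball from $W = S^{q{+}1} \times S^{q{+}1}$ to get a filling by two $(q{+}1)$-handles and a $0$-handle, then uses the Ding--Geiges computation of the invariant $\mathfrak{o}(W^\bullet, J^\bullet) = 2$ to show that connected sums with $\partial W^\bullet$ sweep out all the required classes. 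In each case the key is that the boundary almost contact structure is \emph{computed}, not merely shown to exist in the correct stable class.
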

}

\begin{proof}
\begin{comment}
%Following Gray \cite[p.\,432]{Gray59}, we write
%
%\[ F_{2q{+}1} = SO(2q{+}1)/U(q) \quad \text{and} \quad F_{2q{+}2} = SO(2q{+}2)/U(q{+}1). \]
%
%Gray \cite[p.\,432]{Gray59} shows that there is a homoemorphism $F_{2q{+}1} \cong F_{2q{+}2}$ and
%a fibre bundle $F_{2q{+}1} \to F_{2q+3} \to S^{2q{+}2}$.  It follows that the stabilisation
%map $S \colon F_{2q{+}1} \to SO/U$ is a $(2q{+}1)$-equivalence.
%Consider the following commutative diagram:
%
%\[ \xymatrix{ & F_{2q{+}1} \ar[d] \ar[r]^{\simeq}  & SO/U \ar[d] \\
%& BU(q) \ar[d] \ar[r] & BU \ar[d] \\
%M \ar@{.>}[ur] \ar[r]^{\tau_M} & BSO(2q{+}1) \ar[r] & BSO }  \]
%
%\[ \xymatrix{ & F_{2q{+}1} \ar[d] \ar[r]^{\simeq} & F_{2q{+}2} \ar[d] \ar[r] & SO/U \ar[d] \\
%& BU(q) \ar[d] \ar[r] & BU(q{+}1) \ar[d] \ar[r] & BU \ar[d] \\
%M \ar@{.>}[ur] \ar[r]^{\tau_M} & BSO(2q{+}1) \ar[r]^{S_1} & BSO(2q{+}2) \ar[r] & BSO }  \]
%
%To be explicit, a stable complex structure on $M \times I$ is 
%an isomorphism of oriented bundles 
%
%\[ alpha \colon E \cong TM \oplus \ul{\C}^k \]
% 
%where $E$ is a complex bundle of rank $q+k+1$ and 
%$\ul{\C}^{k}$ denotes the trivial complex bundle of rank $k$.
%
%The stable complex structure $\scxs$ is specified by an isomorphism
%of oriented bundles $\alpha \colon E \cong TM \oplus \ul{\R}^{2k+1}$ where
%$E$ is a complex bundle of rank $q+k+1$ and
%$\ul{\R}^{2k+1}$ denotes the trivial real bundle of rank $(2k+1)$ on $X$.
%Split $\ul{\R}^{2k+1} = \ul{\R} \oplus \ul{\R}^{2k}$ and let 
%$E' := \alpha^{-1}(\ul{\R})^{\perp} \subset E$ be the complex orthogonal 
%complement of $\alpha^{-1}(\ul{\R})$.  
%Since $\alpha(E') \subset TM \oplus \ul{\R}^{2k}$, 
%the bundle isomorphism $\alpha|_{E'} \colon E' \cong \alpha(E')$ defines a complex 
%structure on $TM \oplus \ul{\R}^{2k}$ which stabilises to $\scxs$.  Thus if $k = 0$
%we are done.  If $k > 0$, then $\alpha(E')$ is stably isomorphic to $TM$ and of real
%rank greather than the dimension of $M$.  Hence there is a vector bundle isomorphism 
%$\beta \colon \alpha(E') \cong TM \oplus \ul{\R}^{2k-1}$ and we repeat the 
%argument replacing $(E, \alpha)$ with $(E', \beta \circ \alpha|_{E'})$.  By %induction
%we conclude that $\scxs$ is the stabilisation of an almost contact structure on $M$.
%
%Let $\varphi^\bullet, S\varphi^\bullet, \varphi'^\bullet$ and $S\varphi'^\bullet$ denote respectively 
%the restrictions of $\varphi, S\varphi, \varphi'$ and $S\varphi'$ to $M^\bullet  = M \setminus \text{int} (D^{2q{+}1})$.
%The natural map $SO(2q{+}1)/U(q) \to SO/U$ is a $(2q{+}2)$-equivalence and \jbedit{since $M^\bullet$ is homotopy equivalent to a $2q$-complex}, stable complex structures are equivalent to almost contact structures for
%oriented vector bundles or rank $(2q{+}1)$.
%Since $S\varphi$ and $S\varphi'$ are homotopic, so are $S\varphi^\bullet$ and $S\varphi'^\bullet$.
%Hence $\varphi^\bullet$ and $\varphi'^\bullet$ are homotopic, which is to say that $\varphi$ and $\varphi'$
%differ by connected sum with an almost contact structure $\varphi_d$ on $S^{2q{+}1}$.  Moreover, since
%$S\varphi$ and $S\varphi'$ are homotopic, $\varphi_d$ is stably trivial.
\end{comment}
\dcedit{
Following Gray \cite[p.\,432]{Gray59}, we write
\[ F_{2q{+}1} = SO(2q{+}1)/U(q) \quad \text{and} \quad F_{2q{+}2} = SO(2q{+}2)/U(q{+}1). \]
Gray \cite[p.\,432]{Gray59} shows that there is a homeomorphism $F_{2q{+}1} \cong F_{2q{+}2}$ 
and a fibre bundle
\begin{equation} \label{eq:Gray}
F_{2q{+}1} \to F_{2q+3} \to S^{2q{+}2}. 
\end{equation}
As Sato \cite[Proposition 1]{Sato} observes, the homeomorphism $F_{2q{+}1} \cong F_{2q{+}2}$
entails that there is a bijection between the set of homotopy classes of almost contact structures
on $M$ and the set of homotopy classes of almost complex structures on 
$M \times I$.  Since there is also a bijection between the sets of homotopy
classes of stable complex structures on $M$ and on $M \times I$, it
suffices to prove that every stable complex structure on $M \times I$
is the stabilisation of an almost complex structure on $M \times I$.
But this last point is true for dimensional reasons, as we now explain.

Let $V$ be any oriented vector bundle of real rank $2j$ over a space
$X$ which is homotopy equivalent to a closed connected oriented
$(2j{-}1)$-manifold.  Let $F(V) \to X$ and $F^S(V) \to X$ be,
respectively the \jbedit{bundle of oriented, orthogonal frames and its
  stable analogue}: $F(V)$ is a principal $SO(2j)$-bundle over $X$ and
$F^S(V)$ is a principal $SO$ bundle over $X$.  In addition, we define
$\cal{J}(V) := F(V)/U(j)$ and $\cal{J}^S(V) := F^S(V)/U$ and the
stabilisation map
\[ S \colon \cal{J}(V) \to \cal{J}^S(V) .\]
A complex structure on $V$ corresponds to a section
of the $SO(2j)/U(j)$-bundle $\cal{J}(V) \to X$, a homotopy of complex
structures corresponds to a homotopy of sections and a similar statement
holds for stable complex structures on $V$ and sections of the $SO/U$-bundle 
$\cal{J}^S(V) \to X$.  
To compare stable and unstable complex structures on $V$ we consider the following diagram:
\[ \xymatrix{ & \cal{J}(V) \ar[d] \ar[r]^(0.45)S & \cal{J}^S(V) \ar[d] \\
X \ar@{.>}[ur] \ar[r]^{\Id}& X \ar[r]^{\Id} & X  } \]
Note that the homeomorphism $F_{2j{-}1} \cong F_{2j}$ and the fibre
bundle \eqref{eq:Gray} show that the stabilisation map $S \colon
F_{2j} \to SO/U$ is a $(2j{-}1)$-equivalence. Let $X^{(2j{-}2)}
\subset X$ be a $(2j{-}2)$-skeleton.  Elementary obstruction theory
applied to the lifing problem above shows the following:
\begin{enumerate}
\item Every section $\jbedit{\sigma}^S$ of $\cal{J}^S(V) \to X$ is homotopic to \jbedit{some stabilisation} $S \circ \jbedit{\sigma}$ where $\jbedit{\sigma}$ is a section 
of $\cal{J}(V) \to X$.
\item If $\jbedit{\sigma}$ and $\jbedit{\sigma}'$ are sections of $\cal{J}(V) \to X$ such that $S \circ \jbedit{\sigma}$ and $S \circ \jbedit{\sigma}'$
are homotopic, then there is a homotopy $H$ between the restrictions $\jbedit{\sigma}|_{X^{(2j{-}2)}}$ and $\jbedit{\sigma}'|_{X^{(2j{-}2)}}$.
Moreover, the homotopy $H$ defines an obstruction class,
\[ \mathfrak{o}_H \in H^{2j}(X; A_{2j{-}1}), \]
where $A_{2j{-}1} := \ker(\pi_{2j{-}1}(SO(2j)/U(j)) \to \pi_{2j{-}1}(SO/U))$, and if $\mathfrak{o}_H = 0$, then
$\jbedit{\sigma}$ and $\jbedit{\sigma}'$ are homotopic.
\end{enumerate}
This proves \eqref{lem:stable_to_unstable:onto} and parts (a) and (b) of \eqref{lem:stable_to_unstable:kernel}.

We now turn to the proof of \eqref{lem:stable_to_unstable:kernel}~(c).
The homotopy classes of almost contact structures on $S^{2q{+}1}$
which are stably equivalent to the standard almost contact structure
$\acs_0$ are parametrised by the group $A_{2q{+}1}$ define above.  As
noted in \cite[p.\,1201]{Geiges97}, results of Sato \cite{Sato},
building on work of Morita, show that for $q$ even one can realise all
these homotopy classes $\acs$ via contact structures on various
standard Brieskorn spheres and that all of these $\acs$ are Stein
fillable.}  The Stein fillings then provide the required almost
complex $(2q{+}2)$-manifolds.

A similar observation holds when $q$ is odd, by utilising the
calculations of \cite{Ding&Geiges}. One simply takes the product of
even dimensional spheres $W = S^{q{+}1} \times S^{q{+}1}$. The stable tangent bundle of $W$ is trivial, 
so we choose a stable
trivialisation. After removing a ball we obtain a stably complex
filling of $S^{2q{+}1}$ which is built of two $(q{+}1)$-handles and a zero-handle. 
Thus the stable almost complex structure determines a unique complex structure
$J^\bullet$ on $W^\bullet = W \setminus \text{int} (D^{2q{+}2})$.  Using
the notation of \cite{Ding&Geiges} we have $\mathfrak{o}(W^\bullet ,J^\bullet) = 2$ and the formulae of
\cite[p.\ 3831]{Ding&Geiges} allow one to realise all possible
\emph{unstable} almost contact structures in any equivalence class of
\emph{stable} almost contact structures via connect sums with
$\partial W^\bullet$. 
%Applying boundary connect sums with
%$W^\bullet$ allows us to move through all the homotopy classes
%of contact structures on $S^{2q{+}1}$ which are stably trivial
%and this proves the lemma.
\end{proof}

% subsection stable_and_unstable_surgery (end)
%%%%%%%%%%%%%%%%%%%%%%%%%%%%%%%%%%%%%%%%%%%%%%%%%%%

\subsection{The surgery lemmas} % (fold)
\label{subsec:key_surgery_lemmas}
%%%%%%%%%%%%%%%%%%%%%%%%%%%%%%%%%%%%%%%%%%%%%%%%%%%
In this subsection $\eta \colon B \to BU$ is again a stable complex
vector bundle.  The two lemmas below are consequences of the following classical theorem
of Wall:
\begin{Theorem}[\cite{Wall71}, Theorem~3]\label{thm:ofwall}
Suppose that $W$ is a (connected) $n$-dimensional 
cobordism from $X_-$ to $X_+$ and assume that $(W,X_-)$ is $r$-connected, 
that is, the relative homotopy groups $\pi _i (W, X_-)$ vanish for $i\leq r$. 
Suppose furthermore that $r\leq n-4$. Then,   
the cobordism is geometrically $r$-connected, that is, $W$ admits a 
(relative) handle decomposition built on $X_-$ which involves no handle with
index $\leq r$. \qed
\end{Theorem}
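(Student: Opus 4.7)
The plan is to prove Theorem~\ref{thm:ofwall} by successively eliminating all handles of index $\leq r$ from an arbitrary handle decomposition of $W$ built on $X_-$. Such a decomposition arises from any Morse function $f\colon W \to [0,1]$ with $f^{-1}(0) = X_-$ and $f^{-1}(1) = X_+$, and the handle rearrangement lemma allows us to assume that handles are attached in non-decreasing order of index. I would then induct on $k$ from $0$ to $r$, eliminating the $k$-handles at each stage.

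The $0$-handles are removed by a direct connectedness argument: since $W$ is connected and is built on $X_-$, each $0$-handle is eventually joined to the component containing $X_-$ by a chain of $1$-handles, and a $1$-handle having exactly one foot on a given $0$-handle $h^0$ forms a geometrically cancelling pair with $h^0$ via Milnor's cancellation lemma; deleting such pairs reduces to the case with no $0$-handles.

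For the inductive step with $1 \leq k \leq r$, assume that no handles of index less than $k$ are present, and fix a $k$-handle $h^k$. To eliminate it I would apply Milnor's handle-trading technique: introduce a trivially cancelling pair consisting of a $(k{+}1)$-handle $h^{k+1}_{\mathrm{new}}$ and a $(k{+}2)$-handle $h^{k+2}_{\mathrm{new}}$ on top of the decomposition (which does not change the diffeomorphism type of $W$), and then invoke the hypothesis $\pi_k(W, X_-) = 0$. Since the core of $h^k$ is nullhomotopic in $W$ relative to its attaching sphere, handle slides of $h^{k+1}_{\mathrm{new}}$ can be arranged so that its attaching sphere meets the belt sphere of $h^k$ algebraically once; the Whitney trick is then used to upgrade this to a single transverse geometric intersection point. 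Smale's cancellation lemma then cancels the pair $(h^k, h^{k+1}_{\mathrm{new}})$, effectively replacing $h^k$ by a handle of index $k{+}2$. Iterating over all $k$-handles and inducting on $k$ eliminates every handle of index $\leq r$.

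The main obstacle is the Whitney trick step, since eliminating pairs of intersection points with a Whitney disk requires both a dimension count and a $\pi_1$ condition. In the relevant level set, which has dimension $n{-}1$, the attaching sphere of $h^{k+1}_{\mathrm{new}}$ has dimension $k$ and the belt sphere of $h^k$ has dimension $n{-}k{-}1$, so the interior of a Whitney $2$-disk can be put into general position precisely when $2 < k < n - 3$, that is, $3 \leq k \leq n - 4$; the upper bound is exactly the stated hypothesis $r \leq n - 4$. The $\pi_1$ complications vanish because the relevant complements are simply connected for those $k$ where the Whitney trick is invoked. The low-index edge cases $k = 1, 2$ are treated by exploiting the fact that when $k$ is small relative to $n$, the nullhomotopy $D^{k+1} \to W$ of the core can itself be embedded in general position (since $2(k{+}1) \leq n$ when $k \leq 2$ and $r \leq n-4$), furnishing a cancelling $(k{+}1)$-handle directly without appeal to the Whitney trick.
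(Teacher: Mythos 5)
The paper does not prove Theorem~\ref{thm:ofwall}: it cites Wall's ``Geometrical connectivity.~I'' (\cite{Wall71}, Theorem~3) and records the statement with a \qed. So there is no ``paper's own proof'' to compare against; what you have produced is a reconstruction of Wall's argument, and the overall strategy (order the handles, cancel $0$-handles by connectedness, then trade $k$-handles for $(k{+}2)$-handles via an auxiliary cancelling pair, a handle slide to achieve algebraic cancellation, and the Whitney trick to upgrade to geometric cancellation) is indeed the standard route. Your dimension count for the Whitney trick in the level set $V^{n-1}$ --- interior of the Whitney disc disjoint from the attaching sphere $S^k$ forces $k\le n-4$, disjoint from the belt sphere $S^{n-k-1}$ forces $k\ge 3$ --- is also correct, and the upper bound is precisely where $r\le n-4$ enters.

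There are, however, two genuine gaps. First, the claim that ``the $\pi_1$ complications vanish because the relevant complements are simply connected'' is simply false in general: by the codimension-$\ge 3$ hypothesis one has $\pi_1\bigl(V\setminus(A\cup B)\bigr)\cong\pi_1(V)\cong\pi_1(X_-)$, and nothing in the theorem assumes $X_-$ is simply connected. The Whitney trick still applies, but the correct reason is that intersections are counted in $\Z[\pi]$ and the Whitney circle for a pair of intersection points with equal group element is null-homotopic in $V$; so the argument has to be phrased over the group ring, not over $\Z$. Second, the crucial bridge from the hypothesis $\pi_k(W,X_-)=0$ to ``handle slides of $h^{k{+}1}_{\mathrm{new}}$ can be arranged so that its attaching sphere meets the belt sphere of $h^k$ algebraically once'' is asserted without justification. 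The standard argument goes through the handle chain complex $C_*(\widetilde{W},\widetilde{X_-})$ of $\Z[\pi_1 W]$-modules: $r$-connectivity plus the relative Hurewicz theorem gives $H_j=0$ for $j\le r$, so the basis element $[h^k]\in C_k$ lies in the image of $\partial_{k+1}$, and introducing the cancelling pair and sliding over a chain realising a preimage produces the desired $\pm g$ intersection number. Without this chain-complex translation (and attention to the fact that $\pi_1(X_-)\to\pi_1(W)$ is only surjective, not necessarily injective, when $r=1$), the algebraic-cancellation step is not established. The treatment of the low-index cases $k=1,2$ also glosses over the borderline case $2(k+1)=n$ (e.g.\ $k=2$, $n=6$), where general position only yields an immersed null-homotopy with isolated double points that must be resolved separately.
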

\noindent In particular, Wall's result converts a potentially subtle problem about Morse
 functions with certain properties to a question that is purely homotopy theoretic in nature.

\begin{Lemma}[Filling lemma] \label{lem:topological_filling}
	Let $(M, \acs)$ be an almost contact $(2q{+}1)$-manifold with
	induced stable complex structure $\scxs$ and 
        complex normal $(q-1)$-type $(B^{q{-}1}_{\scxs},
        \eta^{q{-}1}_{\scxs})$.
		If $q\geq2$, then
      	the following are equivalent:
\begin{enumerate}
\item
$(M, \acs)$ is the boundary of a compact almost complex
  $(2q{+}2)$-manifold $(W, \acxs)$ with handles only of index $q{+}1$
  and smaller.
\item
For any $\scxs$-compatible normal $(q{-}1)$-smoothing $\bscxs \colon M \to
B^{q{-}1}_\scxs$,
we have 
\[ [M, \bscxs] = 0 \in \Omega_{2q{+}1}(B^{q{-}1}_\scxs; \eta^{q{-}1}_\scxs). \]
\item
For some stable complex bundle $(B, \eta)$ and some $\scxs$-compatible normal
$(q{-}1)$-smoothing $\bscxs \colon M \to B$, we  have 
\[ [M, \bscxs] = 0 \in \Omega_{2q{+}1}(B; \eta). \]
\end{enumerate}
\end{Lemma}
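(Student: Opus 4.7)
The plan is to prove the cyclic chain $(1) \Rightarrow (3) \Rightarrow (2) \Rightarrow (1)$, noting that $(2) \Rightarrow (3)$ is immediate with $(B, \eta) = (B^{q-1}_\scxs, \eta^{q-1}_\scxs)$. For $(1) \Rightarrow (3)$, given an almost complex filling $(W, \acxs)$ with handles only of index $\leq q+1$, I would take $(B, \eta)$ to be the $(q{-}1)$-th Postnikov factorisation of $S\acxs \colon W \to BU$, producing a $q$-equivalence $\bar{\scxs}_W \colon W \to B$.  Turning the handle decomposition of $W$ upside down as a relative decomposition built on $M$ gives handles of index $\geq q+1$, so $(W, M)$ is $q$-connected and $M \hookrightarrow W$ is a $q$-equivalence; composing, $\bscxs := \bar{\scxs}_W|_M$ is a $\scxs$-compatible normal $(q{-}1)$-smoothing, and $(W, \bar{\scxs}_W)$ witnesses $[M, \bscxs] = 0$.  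For $(3) \Rightarrow (2)$, I would factor $\eta \colon B \to BU$ through its own $(q{-}1)$-Postnikov stage $B \xrightarrow{p} B^{q-1}_\eta \to BU$; since $p \circ \bscxs$ remains a $q$-equivalence, the universal property of Postnikov factorisations identifies $(B^{q-1}_\eta, \eta^{q-1}_\eta)$ with $(B^{q-1}_\scxs, \eta^{q-1}_\scxs)$ as fibrations over $BU$.  The pushed-forward bordism class vanishes and Corollary~\ref{cor:Postnikov} extends this to every $\scxs$-compatible normal $(q{-}1)$-smoothing.

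The main content is $(2) \Rightarrow (1)$.  Take a null-bordism $(W, \bar{\scxs}_W)$ of $(M, \bscxs)$ in $\Omega_{2q+1}(B^{q-1}_\scxs; \eta^{q-1}_\scxs)$.  Applying Proposition~\ref{prop:Kreck_surgery} via a finite sequence of $(B^{q-1}_\scxs, F \circ \eta^{q-1}_\scxs)$-surgeries in the interior of $W$ (so that $\partial W = M$ and $\bar{\scxs}_W|_M = \bscxs$ are preserved), I would replace $\bar{\scxs}_W$ by a $(q{+}1)$-equivalence $\bar{\scxs}_{W'} \colon W' \to B^{q-1}_\scxs$.  Comparing $\bscxs$ and $\bar{\scxs}_{W'}$ in the long exact homotopy sequences shows $M \hookrightarrow W'$ is a $q$-equivalence, hence $\pi_i(W', M) = 0$ for $i \leq q$.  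Wall's Theorem~\ref{thm:ofwall}, applied to $W'$ viewed as a cobordism from $M$ to $\emptyset$ (valid since $q \leq 2q - 2$ for $q \geq 2$), then yields a relative handle decomposition of $W'$ built on $M$ with handles of index $\geq q+1$; flipping this decomposition realises $W'$ from the empty set with handles of index $\leq q+1$.

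It remains to upgrade the stable tangential complex structure on $W'$ (induced by $\eta^{q-1}_\scxs \circ \bar{\scxs}_{W'}$) to an honest almost complex structure $\acxs$ restricting to $\acs$ on $M$.  Following the upside-down handle decomposition, I would build $W'$ inductively from $0$-handles $D^{2q+2}$ with their standard complex structure; each successive handle of index $k+1 \leq q+1$ corresponds to a stable complex surgery of index $k \leq q \leq 2q$ on the current almost contact boundary, and Lemma~\ref{lem:destabilising} produces a compatible almost complex structure on the trace, gluing to the existing structure via its collar.  The resulting $(W', \acxs)$ induces some almost contact structure $\acs'$ on $M$ with $S\acs' = \scxs = S\acs$.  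Lemma~\ref{lem:stable_to_unstable}(2) then supplies an almost contact $(S^{2q+1}, \acs_d)$ with $(M, \acs') \# (S^{2q+1}, \acs_d) \cong (M, \acs)$ and bounding an almost complex $W_d$ with handles of index $\leq q+1$; the boundary connect sum $W' \natural W_d$ is the required almost complex filling of $(M, \acs)$.

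The main obstacle is precisely this last upgrade step: Proposition~\ref{prop:Kreck_surgery} and Wall's theorem convert the bordism-theoretic hypothesis into a handle decomposition of the correct form at the stable normal level, but passing to a \emph{tangential} almost complex structure on $W'$ that restricts to the prescribed $\acs$ on $M$ is genuinely delicate in top dimension.  Closing this gap requires the handle-by-handle destabilisation of Lemma~\ref{lem:destabilising} together with Lemma~\ref{lem:stable_to_unstable}(2), which absorbs the residual boundary ambiguity using exotic Stein-fillable almost contact structures on $S^{2q+1}$.
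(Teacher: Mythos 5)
Your proposal is correct and uses essentially the same ingredients as the paper (Kreck surgery below the middle dimension, Wall's geometric connectivity theorem, handle-by-handle destabilisation via Lemma~\ref{lem:destabilising}, and absorption of the residual boundary ambiguity via Lemma~\ref{lem:stable_to_unstable}); the one genuine deviation is in how you close the cycle. The paper proves $(1)\Rightarrow(2)\Rightarrow(3)\Rightarrow(1)$, with the non-trivial filling construction living in $(3)\Rightarrow(1)$ for a general $(B,\eta)$, and $(1)\Rightarrow(2)$ handled by identifying the complex normal $(q{-}1)$-type of $W$ with that of $M$ and then invoking the action of $\Aut(B^{q-1}_\scxs,\eta^{q-1}_\scxs)$ (Lemma~\ref{lem:Aut(B^k)} / Corollary~\ref{cor:Postnikov}). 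You instead run $(1)\Rightarrow(3)\Rightarrow(2)\Rightarrow(1)$, doing a lighter $(1)\Rightarrow(3)$ that does not need to identify Postnikov types, and then proving $(3)\Rightarrow(2)$ directly by pushing the null-bordism forward along the $(q{-}1)$-Postnikov factorisation $p\colon B\to B^{q-1}_\eta$ of $\eta$ and identifying $(B^{q-1}_\eta,\eta^{q-1}_\eta)$ with $(B^{q-1}_\scxs,\eta^{q-1}_\scxs)$ before appealing to Corollary~\ref{cor:Postnikov}. Your $(2)\Rightarrow(1)$ is then the paper's $(3)\Rightarrow(1)$ specialised to $(B^{q-1}_\scxs,\eta^{q-1}_\scxs)$, with the destabilisation made explicit as an induction over the handle filtration starting from the $0$-handles. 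Both routes are valid; yours makes the role of the Postnikov factorisation more visible and relegates the geometric filling argument to a single implication, while the paper keeps $(1)\Rightarrow(2)$ self-contained so that the filling argument can be stated once in maximal generality $(B,\eta)$.
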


\begin{proof}
	$(1) \Rightarrow (2)$: Suppose that $(W, J)$ is as in the
  statement of the lemma.  The almost complex structure $J$ defines a
  stable complex structure $\scxs_W \colon W \to BU$.  Let
  $(B^{q-1}_{\scxs_W}, \eta^{q-1}_{\scxs_W})$ be the complex normal $(q-1)$-type of 
  $(W, \scxs_W)$ and
  let $\bscxs_W \colon W \to B^{q-1}_{\scxs_W}$ be a $(q-1)$-smoothing of
  $B^{q-1}_{\scxs_W}$.  Let $i \colon M \to W$ be the inclusion.  Then the map
	\[ \psi: = \bscxs_W \circ i \colon M \to B^{q-1}_{\scxs_W} \]
defines a $(B^{q-1}_{\scxs_W}, \eta^{q-1}_{\scxs_W})$-structure on $M$
which is compatible with $\scxs = \nacs$ since
$J|_{\partial W} = \acs$.  Since the smooth manifold $W$ admits a handle
decomposition with handles only of index $(q{+}1)$ or less, by turning
such a decomposition upside down we conclude that $W$ has a handle
decomposition starting from $M$ and adding handles of dimension
$(q{+}1)$ and higher.  It follows that $i \colon M \to W$ is a
$q$-equivalence and hence the map $\xi \colon M \to B^{q-1}_{\scxs_W}$
is a $q$-equivalence.  Since $(B^{q-1}_{\scxs_W},
\eta^{q-1}_{\scxs_W})$ is the complex normal $(q-1)$-type of $W$, the
map $\eta^{q-1}_{\scxs_W} \colon B^{q-1}_{\scxs_W} \to BU$ is a
$q$-coequivalence.  It follows that $(B^{q-1}_{\scxs_W},
\eta^{q-1}_{\scxs_W})$ is a model for the complex normal $(q-1)$-type
of $(M, \scxs)$ and so we identify $(B^{q-1}_{\scxs},
\eta^{q-1}_{\scxs}) = (B^{q-1}_{\scxs_W}, \eta^{q-1}_{\scxs_W})$.  By
construction the map $\psi \colon M \to B^{q-1}_\scxs$ is a complex normal
$(q-1)$-smoothing and $(W, \bscxs_W)$ is a $(B^{q-1}_{\scxs},
\eta^{q-1}_{\scxs})$-null bordism of $(M, \psi)$.  It follows that
$[M, \psi] = 0 \in \Omega_{2q{+}1}(B^{q-1}_{\scxs};
\eta^{q-1}_{\scxs})$.  Now by Lemma \ref{lem:Aut(B^k)}, $[M, \bscxs] =
0 \in \Omega_{2q{+}1}(B^{q-1}_\scxs: \eta^{q-1}_\scxs)$ for any complex
normal $(q-1)$-smoothing $\bscxs \colon M \to B^{q-1}_\scxs$.
	
	$(2) \Rightarrow (3)$: Take $(B, \eta) = (B^{q-1}_{\scxs}, \eta^{q-1}_{\scxs})$.

	$(3) \Rightarrow (1)$: Let $(W, \bscxs_W)$ be a
        $B$-nullbordism of $(M, \bscxs)$. Using surgery below the
        middle dimension as in Proposition \ref{prop:Kreck_surgery},
       we may assume that $\bscxs_W \colon W
        \to B$ is a $(q{+}1)$-equivalence and in particular there are
        isomorphisms of fundmental groups $\pi = \pi_1(M) \cong
        \pi_1(B) \cong \pi_1(W)$.  If $i \colon M \to W$ denotes, the
        inclusion, the commutative diagram
	  \[ 
\xymatrix{ \pi_i(M) \ar[dr]_{\bscxs_*} \ar[rr]^{i_*} & & \pi_i(W) \ar[dl]^{(\bscxs_W)_*} \\ &
  \pi_i(B), }
\] 
and the facts that $\bscxs \colon M \to B$ is a $q$-equivalence and
$\bscxs_W \colon W \to B$ is a $(q{+}1)$-equivalence show that the
inclusion $i \colon M \to W$ is a $q$-equivalence.  By Theorem \ref{thm:ofwall}, it follows
that $W$ is diffeomorphic to a manifold obtained from $M$ by attaching
handles in dimension $(q{+}1)$ and higher.
		
		Turning the above handle decomposition upside down,
               we see that $W$ has a handle
                decomposition consisting of $k$-handles with $k \leq q{+}1$.
                Moreover, the $(B, \eta)$-strucure on $W$
                defines a stable complex structure $\scxs_W$ on $W$.
                Applying Lemma \ref{lem:destabilising} to the
                handlebody decomposition of $W$ we deduce that $W$
                admits an almost complex structure $\acxs$ such that
                $S \acxs = \scxs_W$.  The almost complex structure
                $\acxs$ induces some almost contact structure $\acxs|_{\partial W}$ on $M$ such that $S \acxs|_{\partial W} = S \acs$.  It
                follows that $\acs = \acxs|_{\partial W}+ \acs_0$ where
                $\acs_0 \in \pi_{2q{+}1}(SO(2q{+}1)/U(q))$ is a stably
                trivial almost contact structure on $S^{2q{+}1}$.  By
                Lemma \ref{lem:stable_to_unstable} the almost contact
                manifold $(S^{2q{+}1}, \acs_0)$ admits a Stein filling
                $(W_0, \stein_0)$ and in particular an almost complex
                filling $(W_0, \acxs_0)$.  It follows that the
                boundary connected sum $(W \natural W_0; \acxs
                \natural \acxs_0)$ is an almost complex filling of
                $\acxs|_{\partial W} + \acs_0 = \acs$.
\end{proof}

The above result admits a `relative' version, where consider bordisms
between two smoothings:

\begin{Lemma}[Stable surgery Lemma] \label{lem:stable_bordism}
%%%%%%%%
Let $(W, \bar \scxs_W; M_0, M_1)$ be a $(B, \eta)$-bordism between
normal $(q-1)$-smoothings $(M_0, \bscxs_0)$ and $(M_1, \bscxs_1)$ of
dimension $2q{+}1 \geq 5$.  Then for $j=0, 1$ the bordism $W$ admits a
handlebody decomposition relative to $M_j$ consisting of handles of
index $k \leq q{+}1$.
\end{Lemma}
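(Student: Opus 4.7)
The plan is to apply Wall's Theorem (Theorem \ref{thm:ofwall}) after first improving the connectivity of $\bar\zeta_W\colon W\to B$ by surgery below the middle dimension, and then reducing the desired conclusion to a connectivity statement about the pair $(W,M_j)$. The key observation is that, by turning a handle decomposition upside-down, $W$ admits a handle decomposition on $M_j$ with handles of index $\leq q{+}1$ if and only if it admits one on $M_{1-j}$ with handles of index $\geq q{+}1$. Wall's Theorem (applicable since $q\leq(2q{+}2)-4$ for $q\geq 2$) identifies this latter property with the pair $(W,M_{1-j})$ being $q$-connected. Hence the task reduces to showing that both $(W,M_0)$ and $(W,M_1)$ are $q$-connected.

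The first step is to apply Proposition \ref{prop:Kreck_surgery} to $(W,\bar\zeta_W)$ viewed as a $(2q{+}2)$-dimensional $(B,\eta)$-manifold with boundary (so that $[n/2]=q{+}1$), in order to replace $W$ within its $(B,\eta)$-bordism class rel boundary by a manifold for which $\bar\zeta_W$ is a $(q{+}1)$-equivalence. The main care is required at exactly this step: the surgeries produced in the proof of Proposition \ref{prop:Kreck_surgery} must be carried out in the interior of $W$, so that the boundary components $(M_0,\bar\zeta_0)$ and $(M_1,\bar\zeta_1)$ together with their normal $(q{-}1)$-smoothings are left untouched. This is possible because all spheres along which one performs the relevant $k$-surgeries ($k\leq q$) can be isotoped into the interior of $W$ by general position, and the extensions of the $(B,\eta)$-structure over the attached handle cores go through as in the absolute case.

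With $\bar\zeta_W$ now a $(q{+}1)$-equivalence and each $\bar\zeta_j\colon M_j\to B$ already a $q$-equivalence by Definition \ref{def:normal-k-smoothing}, a diagram chase completes the argument. Using the factorisation $\bar\zeta_j=\bar\zeta_W\circ i_j$ for $i_j\colon M_j\hookrightarrow W$ and comparing the long exact homotopy sequences via the induced maps to $\pi_i(B)$, one deduces that $(i_j)_*\colon \pi_i(M_j)\to\pi_i(W)$ is an isomorphism for $i\leq q-1$ and a surjection for $i=q$. Feeding this into the homotopy exact sequence of the pair $(W,M_j)$ yields $\pi_i(W,M_j)=0$ for $i\leq q$. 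Applying Theorem \ref{thm:ofwall} to $(W,M_{1-j})$ for each $j$ then produces the desired handlebody decomposition of $W$ on $M_j$ with handles of index $\leq q{+}1$.
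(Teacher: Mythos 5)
Your proof is correct and follows essentially the same route as the paper: improve $\bar\zeta_W$ to a $(q{+}1)$-equivalence by surgery below the middle dimension (Proposition \ref{prop:Kreck_surgery}), use the factorisation through $B$ to deduce that each inclusion $i_j\colon M_j\hookrightarrow W$ is a $q$-equivalence, then apply Wall's Theorem \ref{thm:ofwall} to $(W,M_{1-j})$ and turn the resulting handle decomposition upside down. The only addition is your explicit remark that the interior surgeries leave the boundary smoothings intact, which the paper leaves implicit but is already built into its definition of $(B,\svb)$-surgery.
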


\begin{proof}
%%%%%%%%%%%%%
Let $i_j \colon M_j \to W$, $j = 0, 1$ denote the inclusion maps.
Using surgery below the middle dimension as in Proposition \ref{prop:Kreck_surgery}, we may assume that $\bscxs_W \colon W \to B$ is a
$(q{+}1)$-equivalence.  Now consider the following commutative diagram
\[ \xymatrix{ \pi_i(M_0) \ar[drr]_{(\bscxs_0)_*} \ar[rr]^{(i_{0})_*} & & \pi_i(W) 
\ar[d]^(0.4){(\bscxs_W)_*} & & \pi_i(M_1) \ar[ll]_{(i_1)_*} \ar[dll]^{(\bscxs_1)_*} \\ & & \pi_i(B). } \] 
Since the maps $\bscxs_i \colon M_i \to B$ are $q$-equivalences and 
$\bscxs_W \colon W \to B$ is a
$(q{+}1)$-equivalence, it follows that each inclusion $i_j \colon M_j \to
W$ is a $q$-equivalence.  By Theorem \ref{thm:ofwall}, $W$ admits a
handlebody decomposition relative to $M_{j+1}$ consisting of handles
of index $k' \geq q{+}1$.  If we turn this handbody
decomposition upside down we obtain a handlebody decomposition of $W$
relative to $M_j$ consisting of handles of index $k \leq q{+}1$.
\end{proof}

We next give the unstable version of the previous lemma:

\begin{Lemma}[Unstable surgery Lemma] \label{lem:unstable_bordism}
%%%%%%%%
Let $(M_0, \acs_0)$ and $(M_1, \acs_1)$ be almost contact manifolds of
dimension $2q{+}1\geq 5$ with associated stable complex structures $\scxs_0$
and $\scxs_1$.  Suppose for $i = 0, 1$, that $\bscxs_i \colon M_i \to B$, are
$\scxs_i$-compatible normal $(q-1)$-smoothings in a stable complex
bundle $(B, \eta)$ which are $(B, \eta)$-bordant.
Then there
is an almost complex bordism $(W, \acxs; (M_0, \acs_0), (M_1,
\acs_1))$ between $(M_0, \acs_0)$ and $(M_1, \acs_1)$ such that for
$j=0, 1$ the manifold $W$ admits a handlebody decomposition relative
to $M_j$ consisting of handles of index $k \leq q{+}1$.
\end{Lemma}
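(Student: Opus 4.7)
The plan is to reduce the unstable statement to the Stable Surgery Lemma (\ref{lem:stable_bordism}) and then use the two stabilisation lemmas of Section \ref{subsec:stable_and_unstable_surgery} to convert the resulting $(B, \eta)$-bordism into an almost complex bordism with the desired boundary structures. Roughly: Lemma \ref{lem:stable_bordism} provides a bordism $V$ with the correct handle decomposition; Lemma \ref{lem:destabilising} destabilises its stable complex structure to an almost complex structure matching $\acs_0$ on $M_0$; and Lemma \ref{lem:stable_to_unstable}(2) corrects the (a priori wrong) restriction to $M_1$ by a boundary connect sum with an almost complex filling of $(S^{2q{+}1}, \acs_d)$ supplied by that lemma.

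In detail, I first apply Lemma \ref{lem:stable_bordism} to obtain a $(B, \eta)$-bordism $V$ between $(M_0, \bscxs_0)$ and $(M_1, \bscxs_1)$ for which $\bscxs_V$ is a $(q{+}1)$-equivalence, each inclusion $i_j \colon M_j \hookrightarrow V$ is a $q$-equivalence, and $V$ admits handle decompositions relative to each $M_j$ with handles of index $\leq q{+}1$. Iterating Lemma \ref{lem:destabilising} handle-by-handle along the relative decomposition starting from $M_0 \times I$ (legitimate since each handle has index $\leq q{+}1 \leq 2q$) then produces an almost complex structure $\acxs_V$ on $V$ with $S\acxs_V = \scxs_V$ and $\acxs_V|_{M_0 \times I} = \acs_0 \times I$. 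Setting $\acs_1'' := \acxs_V|_{M_1}$, one has $S\acs_1'' = \scxs_1 = S\acs_1$, so Lemma \ref{lem:stable_to_unstable}(2) yields an almost contact structure $\acs_d$ on $S^{2q{+}1}$ together with an almost complex filling $(W_d, \acxs_d)$ of $(S^{2q{+}1}, \acs_d)$ with handles of index $\leq q{+}1$ such that $(M_1, \acs_1'') \# (S^{2q{+}1}, \acs_d) \cong (M_1, \acs_1)$ as almost contact manifolds. I then define $W := V \natural_{M_1, S^{2q{+}1}} W_d$ with $\acxs := \acxs_V \natural \acxs_d$; its boundary $M_0 \sqcup (M_1 \# S^{2q{+}1}) \cong M_0 \sqcup M_1$ carries the desired structures $\acs_0$ and $\acs_1$.

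The main obstacle is verifying that the boundary connect sum has not destroyed the handle decomposition property. The key input is that in both explicit models for $W_d$ used in the proof of Lemma \ref{lem:stable_to_unstable}(2)(c) --- a Milnor-fibre type Stein filling of a Brieskorn sphere for $q$ even, and $S^{q{+}1} \times S^{q{+}1} \setminus D^{2q{+}2}$ for $q$ odd --- the filling $W_d$ is $q$-connected, and admits a CW structure consisting of a single $0$-cell and finitely many $(q{+}1)$-cells. Under boundary connect sum the $0$-cell is absorbed and the attaching spheres of the $(q{+}1)$-cells lie in a contractible disk of $M_1 \subset \partial V$, hence are null-homotopic in $V$. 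Consequently $W \simeq V \vee \bigvee S^{q{+}1}$, the inclusion $V \hookrightarrow W$ is a $q$-equivalence, and each composition $M_j \hookrightarrow V \hookrightarrow W$ is a $q$-equivalence. Applying Wall's Theorem \ref{thm:ofwall} from each end and turning the resulting decompositions upside down --- exactly as in the proof of Lemma \ref{lem:stable_bordism} --- then produces the required pair of relative handle decompositions of $W$ with handles of index $\leq q{+}1$.
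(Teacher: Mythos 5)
Your proof is correct and follows essentially the same route as the paper's: apply the Stable Surgery Lemma \ref{lem:stable_bordism}, destabilise with Lemma \ref{lem:destabilising} iteratively from $M_0$, and correct the almost contact structure on the $M_1$-end by a boundary connected sum with the almost complex filling $(W_d,\acxs_d)$ of $(S^{2q+1},\acs_d)$ supplied by Lemma \ref{lem:stable_to_unstable}. You make explicit, via the connectivity argument and Theorem \ref{thm:ofwall}, why the connected sum preserves both relative handle decompositions (a point the paper leaves implicit behind ``the proof for $j=1$ is similar''); note only that the $q$-connectedness of $W_d$ need not be read off the explicit models --- it follows automatically from the long exact sequence of the pair $(W_d, S^{2q+1})$ once $W_d$ has a handle decomposition with indices $\leq q{+}1$, since turning this upside down gives $\pi_i(W_d, S^{2q+1})=0$ for $i\leq q$ and $\pi_i(S^{2q+1})=0$ in the same range.
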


\begin{proof}
	Let us give the proof for $j=0$, the proof for $j=1$ is
        similar.  By Lemmas
        \ref{lem:destabilising} and \ref{lem:stable_bordism} there is an almost complex bordism
        $(W, \acxs; (M_0, \acs_0), (M_1, \acs_1))$ where $W$ is
        obtained from $M_0$ by attaching handles of index $(q{+}1)$ or
        less and where the almost contact structure $\acs_1'$
        satisfies $S\acs_1' = S\acs_1$.  It follows that $\acs_1 =
        \acs_1' + \acs_0$ where $\acs_0 \in \pi_{2q{+}1}(SO(2q{+}1)/U(q))$
        is a stably trivial almost contact structure on $S^{2q{+}1}$.
        By Lemma \ref{lem:stable_to_unstable} the almost contact
        manifold $(S^{2q{+}1}, \acs_0)$ admits an almost complex filling
        $(W_0, \acxs_0)$ with handles of index $\leq q{+}1$.  Taking
        the boundary connected sum of $W$ and $W_0$ at the $M_1$
        boundary component of $W$ we obtain an almost complex bordism
        $(W \natural W_0, \acxs \natural \acxs_0; (M_0, \acs_0), (M_1,
        \acs_1))$ where $W \natural W_1$ has a handlebody
        decomposition relative to $M_0$ consisting of handles of index
        $(q{+}1)$ or less.
\end{proof}

% subsection surgery_lemmas (end)
%%%%%%%%%%%%%%%%%%%%%%%%%%%%%%%%%%%%%%%%%%%%%%%%%%%

\subsection{Complex normal $k$-types} % (fold)
\label{subsec:normal_k_types}
%%%%%%%%%%%%%%%%%%%%%%%%%%%%%%%%%%%%%%%%%%%%%%%%%%%
In this subsection, we identify the complex normal $k$-type
$(B^k_\scxs, \eta^k_\scxs)$, of a general stably complex manifold $(X,
\scxs)$ under certain assumptions for $k=1$ and $k=2$.  These
computations will play crucial roles in our applications
(cf. Section~\ref{sec:7_manifolds}).  We shall use the following
notation.  Since we do not distinguish between stable complex bundles
and their classifying maps, we shall write $f \oplus g \colon X \times
Y \to BU$ for the exterior Whitney sum of stable complex bundles
classified by maps $f \colon X \to BU$ and $g \colon Y \to BU$.  Also,
we let $\pi_{SU}
\colon BSU \to BU$ be the map of classifying spaces induced by the 
inclusion $SU \subset U$.

\begin{Lemma}  \label{lem:1type}
%%%%%%%%
Let $(X, \scxs)$ be a stably complex manifold with $\pi = \pi_1(X)$.
\begin{enumerate}
\item \label{it:c1_onto}
If $\scxs_* \colon \pi_2(X) \to \pi_2(BU)$ is onto then 
\[ (B^1_\scxs, \eta^1_\scxs) = \bigl( K(\pi, 1) \times BU, {\rm pr}_{BU} \bigr).  \]

\item \label{it:c1_zero}
If $c_1(\scxs) = 0 \in H^2(X)$ then
\[ (B^1_\scxs, \eta^1_\scxs) = 
\bigl( K(\pi, 1) \times BSU, \pi_{SU} \circ {\rm pr}_{BU} \bigr).     \]
\end{enumerate}
\end{Lemma}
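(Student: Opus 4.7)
My plan is to exploit the uniqueness of Postnikov factorisations noted in Definition~\ref{def:complex-normal-k-type} (via \cite[Corollary 5.3.8]{Baues77}, as already used in Lemma~\ref{lem:Aut(B^k)}): in each case it suffices to exhibit an explicit candidate fibration $\eta \colon B \to BU$ together with a lift $\bscxs$ of $\scxs$, and to check that $\bscxs$ is a $2$-equivalence and $\eta$ is a $2$-coequivalence; the resulting $(B,\eta)$ will then be fibre homotopy equivalent to $(B^1_\scxs, \eta^1_\scxs)$.

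For part~(1) I would take a classifying map $u \colon X \to K(\pi,1)$ for the universal cover, form $\bscxs := (u,\scxs) \colon X \to K(\pi,1) \times BU$, and let $\eta := \pr_{BU}$ (replaced by a fibration if desired, which does not change the fibre homotopy type). The checks are routine: since $\pi_j(K(\pi,1)) = 0$ for $j \geq 2$, the projection $\eta$ is the identity on $\pi_j$ for $j\geq 2$, hence a $2$-coequivalence; the map $\bscxs$ is an isomorphism on $\pi_1$ by choice of $u$, and surjects onto $\pi_2 = \pi_2(BU)$ exactly under the hypothesis on $\scxs_*$.

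For part~(2) the new ingredient is the fibration sequence $BSU \xra{\pi_{SU}} BU \xra{c_1} K(\Z,2)$, with $BSU$ the homotopy fibre of $c_1$. The assumption $c_1(\scxs) = 0 \in H^2(X;\Z) = [X,K(\Z,2)]$ yields a lift $\hat\scxs \colon X \to BSU$ with $\pi_{SU} \circ \hat\scxs \simeq \scxs$. I would then set $\bscxs := (u,\hat\scxs)$ and $\eta := \pi_{SU} \circ \pr_{BSU}$ (again replaced by a fibration). The conditions reduce to elementary homotopy facts: $\pi_2(BSU) = 0$ makes both the $\pi_2$-surjectivity of $\bscxs$ and the $\pi_2$-injectivity of $\eta$ automatic, while $\pi_j(BSU) \cong \pi_j(BU)$ for $j \geq 3$ follows from the fibration $SU \to U \to S^1$. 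The only non-formal input anywhere in the argument is the existence of the lift $\hat\scxs$, which is the standard interpretation of $c_1$ as the Postnikov $k$-invariant of $\pi_{SU}$; I do not anticipate any serious obstacle.
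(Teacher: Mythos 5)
Your proposal matches the paper's own proof essentially step for step: the paper also takes $\bscxs := (u \times \scxs)$ in part~(1) and $\bscxs := (u \times \scxs')$ in part~(2), where $\scxs'$ is a lift of $\scxs$ to $BSU$, and then verifies the $2$-equivalence and $2$-coequivalence conditions by the same elementary homotopy observations. Your remarks on fibrant replacement (unnecessary here, since the projections are already fibrations) and on $c_1$ as the $k$-invariant of $\pi_{SU}$ simply make explicit what the paper leaves implicit.
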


\begin{proof}
Both ${\rm pr}_{BU}$ and $\pi_{SU} \circ {\rm pr}_{BU}$ are
$2$-coequivalences.  Thus, from the defining properties of the second
Postnikov approximation of $\scxs \colon X \to BU$, it suffices to
find maps $\bscxs \colon X \to B^1_\scxs$ which are $2$-equivalences
and which factor $\scxs$ over $\eta^1_\scxs$.

\eqref{it:c1_onto}
Let $u \colon X \to K(\pi, 1)$ classify the universal covering of $X$ and 
define $\bscxs$ by
\[  \bscxs : = (u \times \scxs) \colon X \to K(\pi, 1) \times BU. \]
The assumption that $\scxs_*$ in onto on $\pi_2$ ensures that $\scxs$
is a $2$-equivalence and clearly ${\rm pr}_{BU} \circ \bscxs = \scxs$.

\eqref{it:c1_zero} Since $c_1(\scxs) = 0$, there is a lift of $\scxs$
to $\scxs' \colon X \to BSU$.  Define $\bscxs$ by
\[  \bscxs : = (u \times \scxs') \colon X \to K(\pi, 1) \times BSU. \] 
Since $\pi_2(BSU) = 0$, $\scxs$ is a $2$-equivalence and clearly 
$\pi_{SU} \circ {\rm pr}_{BU} \circ \bscxs = \scxs$.
\end{proof}

Now we consider the complex normal $2$-type of $(X, \scxs)$.  Let
$p_2 \colon X \to P_2(X)$ denote a $3$-equivalence from $X$ to its
second Postnikov stage, $P_2(X)$.

\begin{Lemma} \label{lem:2type}
%%%%%%%%%%%%%%%%%%%
Let $(X, \scxs)$ be a stably complex manifold and let $\gamma_\scxs$
by the unique complex line bundle over $P_2(X)$ such that 
$c_1(p_2^*(\gamma_\scxs)) = -c_1(\scxs)$.  Then
\[ (B^2_\scxs, \eta^2_\scxs) = 
\bigl( P_2(X) \times BSU, \gamma_\scxs \oplus \pi_{SU} \bigr). \]
\end{Lemma}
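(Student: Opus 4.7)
My plan is to verify the two defining properties of the complex normal $2$-type from Definition \ref{def:complex-normal-k-type} for the candidate pair $(P_2(X) \times BSU, \gamma_\scxs \oplus \pi_{SU})$ by constructing an explicit factoring map $\bscxs \colon X \to P_2(X) \times BSU$. The strategy generalizes the argument for Lemma \ref{lem:1type}\,\eqref{it:c1_zero}: when $c_1(\scxs) \neq 0$, the obstruction to lifting $\scxs$ directly to $BSU$ is absorbed by the line bundle summand $\gamma_\scxs$.

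First, the line bundle $\gamma_\scxs$ is well-defined up to isomorphism because the $3$-equivalence $p_2$ induces an isomorphism $p_2^* \colon H^2(P_2(X); \Z) \to H^2(X; \Z)$, and complex line bundles on $P_2(X)$ are classified by $c_1 \in H^2(P_2(X); \Z)$. For the factoring map, I would form the Whitney sum of $\scxs$ with a suitable pullback from $P_2(X)$; with the sign convention chosen so that the first Chern class cancels, the resulting stable complex bundle on $X$ has vanishing $c_1$. Since $\pi_{SU} \colon BSU \to BU$ is (up to homotopy) the homotopy fibre of $c_1 \colon BU \to K(\Z, 2)$, this bundle lifts to a map $\tilde{\scxs} \colon X \to BSU$. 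Setting $\bscxs := (p_2, \tilde{\scxs}) \colon X \to P_2(X) \times BSU$, a direct inspection of the classifying maps shows that $(\gamma_\scxs \oplus \pi_{SU}) \circ \bscxs$ classifies $\scxs$, so the composite is homotopic to $\scxs$ as maps $X \to BU$.

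Next I would check that $\bscxs$ is a $3$-equivalence. Since $BSU$ is $3$-connected, $\bscxs$ induces on $\pi_i$ for $i \leq 2$ the same map as $p_2$, which is an isomorphism by the defining property of the second Postnikov stage. On $\pi_3$, the target group $\pi_3(P_2(X) \times BSU) \cong \pi_3(P_2(X)) \oplus \pi_3(BSU) = 0$, so surjectivity is automatic. Analogously, $\eta^2_\scxs = \gamma_\scxs \oplus \pi_{SU}$ is a $3$-coequivalence: for $j \geq 3$ we have $\pi_j(P_2(X) \times BSU) \cong \pi_j(BSU)$, and the inclusion $SU \hookrightarrow U$ yields an isomorphism $\pi_j(BSU) \cong \pi_j(BU)$ for $j \geq 3$ via the long exact sequence of the fibration $SU \to U \to S^1$. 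Hence $\eta^2_\scxs$ is injective on $\pi_3$ and an isomorphism on $\pi_j$ for $j > 3$.

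With both properties established, the uniqueness of the Postnikov factorization up to fibre homotopy equivalence (Definition \ref{def:complex-normal-k-type}) identifies $(P_2(X) \times BSU, \gamma_\scxs \oplus \pi_{SU})$ as the complex normal $2$-type of $(X, \scxs)$. The only delicate step is arranging the sign convention for $\gamma_\scxs$ so that the appropriate stable difference with $p_2^*(\gamma_\scxs)$ has vanishing first Chern class and hence lifts to $BSU$; beyond this, the argument reduces to routine computations with homotopy groups of $BU$ and $BSU$ and the defining property of $P_2(X)$.
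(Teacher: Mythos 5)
Your argument follows the paper's proof step for step: use the $H^2$-isomorphism induced by $p_2$ to define $\gamma_\scxs$, lift the $c_1$-cancelled stable sum to $BSU$, form $\bscxs=(p_2,\tilde\scxs)$, and read off both defining properties of the complex normal $2$-type from $BSU$ being $3$-connected and $\pi_3(P_2(X))=0$. The only difference is cosmetic: you spell out the $3$-coequivalence of $\gamma_\scxs\oplus\pi_{SU}$ via the fibration $SU\to U\to S^1$ and explicitly flag the sign convention governing which bundle on $X$ (the stable difference $\scxs - p_2^*(\gamma_\scxs)$) actually lifts to $BSU$ so that $(\gamma_\scxs\oplus\pi_{SU})\circ\bscxs$ classifies $\scxs$, a point the published proof passes over in silence.
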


\begin{proof}
By definition, the map on second cohomology induced by $p_2$ is an
isomorphism:
$ p_2^* \colon H^2(P_2(X)) \cong H^2(X).$
Hence there is a (unique isomorphism class of) line bundle
$\gamma_\scxs$ over $P_2(X)$ such that $p_2^*(\gamma_X) =
-c_1(\scxs)$.  The stable complex bundle $\xi : = \scxs \oplus
p_2^*(\gamma_\scxs)$ satisfies
\[ c_1(\xi) = c_1(\scxs) - c_1(\scxs) = 0 \in H^2(X), \]
and so $\xi$ admits an $SU$-structure classified by a map $\xi' \colon
X \to BSU$. We define $\bscxs$ by
\[ \bscxs : = (p_2 \times \xi') \colon X \to P_2(X) \times BSU. \]
Since $BSU$ is $3$-connected and $\pi_3(P_2(X))= 0$, $\bscxs$ is a
$3$-equivalence. By construction we have $(\gamma_\scxs \oplus \pi_{SU})
\circ \bscxs = \scxs$ and clearly $\gamma_\scxs \oplus \pi_{SU}$ is a
$3$-coequivalence.  It follows that $(B^2_\scxs, \eta^2_\scxs)$ is
the complex normal $2$-type of $(X, \scxs)$.
\end{proof}

\section{Contact structures and complex normal bordism} \label{sec:complex}
%%%%%%%%%%%%%%%%%%%%%%%%%%%%%%%%%%%%%%%%%%%%%%%%%%%%%%%%%%%%%%%%%%%%%%%%%%%%%%%%%%%%%%%%%%%%
After recalling the necessary definitions and the statement of
Eliashberg's $h$-principle, we state our main surgery theorems,
Theorems~\ref{thm:Stein} and \ref{thm:bordism}.  The proofs of these
theorems rest on the discussion presented in Section~\ref{subsec:key_surgery_lemmas}.

\subsection{Symplectic fillability and contact surgery} 
\label{subsec:sympfill_and_ctct_surg} % (fold)
%%%%%%%%%%%%%%%%%%%%%%%%%%%%%%%%%%%%%%%%%%%%%%%%%%%%%%%%%%
Recall that a symplectic manifold $(W, \omega)$ is a
$(2q{+}2)$-dimensional manifold $W$ with a closed $2$-form $\omega$ such
that $\omega^{q{+}1} \neq 0$ at every point in $W$. In particular, a symplectic
manifold carries a canonical orientation. Recall, furthermore, that a
cooriented, codimension-1 distribution $\xi$ on a $(2q{+}1)$-manifold
$M$ is a \emph{contact structure} if there is a $1$-form $\alpha$ such that
$\ker (\alpha) = \xi$ and
\[ \alpha \wedge (d \alpha)^{q} \neq 0. \]
Note that this then also determines an orientation of $M$. 
Two contact manifolds $(M_0,\xi_0)$
and $(M_1,\xi_1)$ are \emph{contactomorphic} if there is a diffeomorphism
$\phi: M_0 \to M_1$ such that
$$\phi_*(\xi_0) = \xi_1.$$
We now recall the various notions of fillability for contact structures.
\begin{Definition}[Strongly symplectically fillable and exactly fillable]
%%%%%%%%%
A contact manifold $(M,\xi)$ is called strongly symplectically
  fillable if it bounds a compact symplectic manifold $(W,\omega)$
and there is an outward pointing vector field $V$ near $\del X$ such
that the Lie derivative satisfies $L_V\omega = \omega$, and $\lambda
=\iota_V\omega$ is a defining $1$-form for $\xi$.  If the symplectic
form $\omega$ is exact then we say that $(M,\xi)$ is exactly
  fillable.
\end{Definition}
\noindent A further specialisation of the notion of fillability is that
of Stein fillability. Recall that a Stein domain is a compact,
complex manifold $(W,J)$ with boundary that admits a function $\phi: W
\to [0,1]$ so that $\omega = -dd^{\mathbb {C}} \phi$ is a symplectic
form and $\phi^{-1}(1) = \del W$ is a regular level.

\begin{Definition}[Stein fillable]
%%%%%%%%
A contact manifold $(M,\xi)$ is called Stein fillable if it 
bounds a Stein domain $(W,J)$ such that $\xi = J(TM) \cap TM$.
\end{Definition}
\noindent These notions of fillability fit into the following sequence
of inclusions of contactomorphism classes of contact
manifolds:
\begin{equation} \label{eq:contactflavours} 
\{\text{Stein fillable\} $\subseteq$ \{exactly fillable\} $\subseteq$
  \{strongly fillable\}. }
\end{equation}

%%%%%%%%%%%%%%%%%%%%%%%%%%%%%%%%%%%%%%%%%%%%%%%%%%%%%%
\noindent 
\dcedit{A $k$-sphere $S^k \subset M$} in an contact manifold $(M^{2q{+}1}, \xi)$
is called {\em isotropic} if $TS^k \subset \xi$.
Surgery on an isotropic sphere $S^{k}$ 
%in a contact manifold $(M^{2q{+}1},\xi)$ }
can be performed in a way that is compatible with the
contact structure. If $k \leq q$ and $2q{+}1 \geq 5$ then any embedded sphere 
can be realised by an isotropic sphere and such surgeries can be realised by the
attachment of a symplectic or ``Weinstein'' $(k{+}1)$-handle $h_{k{+}1} :=
D^{k{+}1}\times D^{2q{+}1-k}$, provided that the associated almost complex
structure on the product manifold $M^{2q{+}1}\times [0,1]$ extends over
the trace $(M^{2q{+}1}\times [0,1]) \cup h_{k{+}1}$ of the
surgery (cf.\,\cite{Eliashberg??, Cieliebak&Eliashberg12}) . Furthermore, 
the symplectic nature of the handle attachment
shows that the symplectic fillability of a contact structure is
preserved under such contact surgeries.  In addition, Eliashberg
showed that when attaching a Weinstein handle to a Stein manifold, the
Stein structure also extends. (For more details concerning contact
surgery and Weinstein handles we refer the reader to
\cite{Cieliebak&Eliashberg12} or \cite{Geiges08} or \cite{Weinstein}.)

\begin{Theorem}\label{thm:h-principle}
%%%%%%%%%%%%%%%%%%%%%%%%%%%%%%%%%%%%%5
Let $(M^{2q{+}1},\xi)$ be a contact manifold of dimension $2q{+}1 \geq 5$
with associated almost contact structure $\acs$.
Suppose that $k \leq q$ and that $(M', \acs')$ is obtatined from $(M, \acs)$ 
via a $k$-dimensional
almost complex surgery with trace $(M \times I) \cup h_{k{+}1}$ 
as in Defintion \ref{def:almost_complex_surgery}.
Then $M'$ admits a contact structure $\xi'$.  If is $(M^{2q{+}1},\xi)$ symplectically 
or exactly fillable, then so is $(M',\xi')$.  Moreover, if $(W,J)$ is a Stein filling of
$(M^{2q{+}1},\xi)$ then there is $J'$ on $W \cup h_{k{+}1}$ such that it
is a Stein filling of $(M',\xi')$. \qed
\end{Theorem}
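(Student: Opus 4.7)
The plan is to assemble the theorem from three well-established ingredients in contact topology: (a) the $h$-principle for isotropic embeddings, (b) the Weinstein handle attachment procedure, and (c) Eliashberg's theorem on extending plurisubharmonic functions over handles of index $\leq q+1$.

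First I would use the dimension hypothesis $k \leq q$ to replace the attaching sphere $S^k \subset M$ by an \emph{isotropic} sphere in the same isotopy class. The point is that a $k$-sphere ($k \leq q$) in a $(2q{+}1)$-dimensional contact manifold has more than enough room in the contact hyperplane field $\xi$ for Gromov's $h$-principle for isotropic embeddings to apply: every formal isotropic embedding is isotopic through formal isotropic embeddings to a genuine one. Thus, without loss of generality, the attaching sphere of the handle $h_{k{+}1}$ can be taken to be isotropic.

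Next, the handle $h_{k{+}1}$ must be attached as a Weinstein handle, which requires a trivialisation of the symplectic normal bundle of the isotropic sphere in $\xi$ (the so-called ``conformal symplectic normal bundle'' framing). The role of the hypothesis that the almost contact structure $\acs$ on $M \times [0, 1]$ extends to an almost complex structure on $(M\times[0,1])\cup h_{k{+}1}$ is precisely to supply the formal data needed to select such a Weinstein framing: the homotopy class of the extension corresponds, through the identification of $U(q{+}1)$-structures with isotropic framings in the contact sense, to a Weinstein framing of the attaching sphere. Once this framing is fixed, Weinstein's construction (in the form expounded by Eliashberg and Cieliebak--Eliashberg) attaches the handle as a local symplectic cobordism, producing a contact manifold $(M', \xi')$ on the upper boundary. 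This already establishes part (1).

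For parts (2) and (3) I would observe that a Weinstein handle carries a canonical Liouville vector field tangent to its core cocore pair and outward-pointing on the upper boundary. Attaching such a handle to a strong symplectic or exact filling $(W, \omega)$ of $(M, \xi)$ therefore extends $\omega$ (respectively, $\omega = d\lambda$) over $W \cup h_{k{+}1}$, yielding a strong (resp.\ exact) filling of $(M', \xi')$. For the Stein statement, Eliashberg's theorem \cite{Eliashberg??, Cieliebak&Eliashberg12} supplies a complex structure $J'$ on $W \cup h_{k{+}1}$ together with a plurisubharmonic function extending the given one on $W$, provided the handle index $k{+}1$ does not exceed the middle dimension $q{+}1$; this is exactly our hypothesis $k \leq q$.

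The main technical obstacle, and essentially the only non-formal step, is matching the extension of the almost complex structure on the trace with a genuine Weinstein framing of the isotropic attaching sphere; this is the content of the Weinstein framing analysis in \cite{Cieliebak&Eliashberg12} and is what ties the topological input of Definition~\ref{def:almost_complex_surgery} to the geometric output of the handle attachment. Given this, the proof reduces to invoking the cited theorems.
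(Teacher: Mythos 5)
Your proposal tracks the paper's own outline exactly: the paragraph immediately preceding the theorem sketches the same three ingredients (realise the attaching sphere as isotropic when $k \leq q$, use the extension of the almost complex structure over the trace to supply the Weinstein framing, then attach a Weinstein handle and observe that symplectic/exact/Stein filling structures extend), and the paper then states the theorem with a citation-\qed rather than giving a written proof. Your write-up is a correct elaboration of the very literature (Eliashberg, Cieliebak--Eliashberg, Weinstein, Geiges) that the paper is citing, and it identifies the same single non-formal step --- matching the almost complex extension datum with a Weinstein framing --- as the paper's discussion does.
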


\begin{Remark}
%%%%%%%%%%%%%%
Although \cite[Theorem 6.3.1]{Geiges08}  is not stated explicitly for exact 
fillability, the proof also holds in the case of exact fillability, since 
attaching Weinstein handles does not affect the exactness of the symplectic 
form on the filling. 
\end{Remark}

\noindent Applying Theorem \ref{thm:h-principle} inductively over a
handle decomposition, one obtains the following
(cf.\ \cite{Cieliebak&Eliashberg12}, Theorem 8.15):

\begin{Corollary}[Eliashberg's $h$-principle]\label{cor:h-principle}
	Let $(W,J)$ be a compact $(2q{+}2)$-dimensional almost complex
        manifold with handles only in dimensions $q{+}1$ or less.  Then
        $J$ is homotopic to an almost complex structure $\tilde{J}$ so
        that $(W,\tilde{J})$ is a Stein filling of $M = \partial W$
        and in particular, $M$ is Stein fillable. \qed
\end{Corollary}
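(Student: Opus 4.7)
The plan is to proceed by induction on the number of handles in a handle decomposition of $W$, using Theorem~\ref{thm:h-principle} at each step to extend a Stein filling over one more handle.

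First, I would fix a handle decomposition of $W$ with handles of index $k \leq q{+}1$, and write $W_i$ for the union of the first $i$ handles, so $\emptyset = W_0 \subset W_1 \subset \dots \subset W_N = W$.  Set $M_i := \partial W_i$.  The handles of index $0$ are disjoint copies of $D^{2q{+}2}$, each admitting the standard Stein structure with boundary the standard contact $S^{2q{+}1}$; taking boundary connected sum yields a Stein filling $(W_{i_0}, \tilde J_{i_0})$ of $M_{i_0}$ once all $0$-handles have been attached.  Moreover, since $\pi_0(SO(2q{+}2)/U(q{+}1)) = \pi_1(SO(2q{+}2)/U(q{+}1)) = 0$, any two almost complex structures on a disjoint union of balls are homotopic, so $\tilde J_{i_0}$ is homotopic to $J|_{W_{i_0}}$ relative to a basepoint.

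Next, I would carry out the inductive step: assume we have a Stein filling $(W_i, \tilde J_i)$ of the contact manifold $(M_i, \xi_i)$ together with a homotopy of almost complex structures from $\tilde J_i$ to $J|_{W_i}$.  The passage from $W_i$ to $W_{i+1}$ attaches a single $(k{+}1)$-handle with $k \leq q$ along some embedded sphere $S^k \subset M_i$; the almost complex structure $J|_{W_{i+1}}$ provides an almost complex extension over the trace of the surgery.  Applying Theorem~\ref{thm:h-principle} in the form stated (with $(M, \acs) = (M_i, \varphi_i)$ and the almost complex structure on the trace coming from $J|_{W_{i+1}}$), we obtain a Stein structure $\tilde J_{i+1}$ on $W_{i+1}$ extending $\tilde J_i$ so that $(W_{i+1}, \tilde J_{i+1})$ is a Stein filling of the new boundary $(M_{i+1}, \xi_{i+1})$.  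Concatenating the given homotopy from $\tilde J_i$ to $J|_{W_i}$ with the homotopy produced by Theorem~\ref{thm:h-principle} over the handle, one obtains a homotopy from $\tilde J_{i+1}$ to $J|_{W_{i+1}}$.

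After $N$ steps we obtain a Stein structure $\tilde J := \tilde J_N$ on $W = W_N$ homotopic to $J$ as almost complex structures, with $(W, \tilde J)$ a Stein filling of $M = \partial W$, establishing both statements of the corollary.

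I expect the main technical issue to be the bookkeeping at the base case and at each inductive step: namely, verifying that the almost complex structures on the successive traces $(M_i \times I) \cup h_{k{+}1}$ produced by $J$ are indeed homotopic to those required by Theorem~\ref{thm:h-principle} and that these homotopies glue coherently.  This amounts to standard obstruction theory for extending almost complex structures over handles of index $\leq q{+}1$ in a $(2q{+}2)$-manifold, where the relevant obstruction groups $\pi_k(SO(2q{+}2)/U(q{+}1))$ are controlled in exactly the range provided by Gray's stability result already invoked in Lemma~\ref{lem:destabilising}.
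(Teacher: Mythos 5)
Your proposal is correct and implements exactly the inductive strategy the paper intends: the paper introduces the corollary with the sentence ``Applying Theorem~\ref{thm:h-principle} inductively over a handle decomposition, one obtains the following'' and leaves the rest to the reader, which is precisely the handle-by-handle argument you spell out. One small caveat worth flagging: as stated, Theorem~\ref{thm:h-principle} guarantees the existence of a Stein structure $J'$ on the extended filling but does not \emph{literally} assert that $J'$ is homotopic (rel.~the already-Steinified part) to the given almost complex structure on the attached handle; your ``concatenating the given homotopy with the homotopy produced by Theorem~\ref{thm:h-principle}'' step is therefore appealing to the stronger form of Eliashberg's extension theorem that does control this homotopy class, which is exactly the content of \cite[Theorem~8.15]{Cieliebak&Eliashberg12} that the paper cites alongside this corollary. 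Your closing paragraph correctly locates the remaining bookkeeping (coherence of the handle-by-handle homotopies, controlled via $\pi_k(SO(2q{+}2)/U(q{+}1))$ for $k\leq q{+}1$) as routine obstruction theory, in line with Lemma~\ref{lem:destabilising}.
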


% subsection Symplectic_fillability_and_contact_surgery (end)
%%%%%%%%%%%%%%%%%%%%%%%%%%%%%%%%%%%%%%%%%%%%%%%%%%

\subsection{Surgery theorems} % (fold)
\label{subsec:surgery_theorems}
%%%%%%%%%%%%%%%%%%%%%%%%%%%%%%%%%%%%%%%%%%%%%%%%%%
In this subsection we state our main theorems concerning Stein
fillings and contact surgery.  The results will be mainly translations
of the surgery theoretic results from
Section~\ref{subsec:key_surgery_lemmas}.  We begin with the result
corresponding to Lemma~\ref{lem:stable_to_unstable}.

\begin{Lemma} \label{lem:stable_to_unstable_Stein}
	Suppose that the almost contact manifold $(M, \acs)$ can be realised as a
        contact structure $\xi$.  Then every homotopy class of almost
        contact structure which is stably equivalent to $\acs$ admits
        a contact structure obtained from $\xi$ by connected sum with
        a Stein fillable contact structure on $S^{2q{+}1}$.
\end{Lemma}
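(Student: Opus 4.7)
The plan is to combine Lemma \ref{lem:stable_to_unstable} with Eliashberg's $h$-principle (Corollary \ref{cor:h-principle}), and then invoke the standard contact connected sum. Let $\acs'$ be an almost contact structure on $M$ with $S\acs' = S\acs$. Applying part (\ref{lem:stable_to_unstable:kernel}) of Lemma \ref{lem:stable_to_unstable} produces an almost contact structure $\acs_d$ on $S^{2q{+}1}$ such that, firstly, $(M,\acs)\#(S^{2q{+}1},\acs_d)$ and $(M,\acs')$ are equivalent as almost contact manifolds, and secondly, $(S^{2q{+}1},\acs_d)$ bounds an almost complex $(2q{+}2)$-manifold $(W_d, \acxs_d)$ admitting a handle decomposition with all handles of index at most $q{+}1$.

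Next, applying Corollary \ref{cor:h-principle} to $(W_d,\acxs_d)$ gives a homotopic almost complex structure $\tilde \acxs_d$ for which $(W_d,\tilde \acxs_d)$ is a Stein filling of $S^{2q{+}1}$. The induced contact structure $\xi_d := \tilde \acxs_d(TS^{2q{+}1})\cap TS^{2q{+}1}$ is then Stein fillable, and its underlying almost contact structure is homotopic to $\acs_d$.

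Finally, I form the contact connected sum $(M,\xi)\#(S^{2q{+}1},\xi_d)$. The contact connected sum is a standard operation (see e.g.~\cite{Geiges08}), which in the present framework is realised as the attachment of a contact $0$-handle and hence is consistent with the almost contact structures on the two summands. The underlying manifold is $M\#S^{2q{+}1}\cong M$, and the underlying almost contact structure is $\acs\#\acs_d$, which by the construction of $\acs_d$ agrees with $\acs'$ up to homotopy. Thus $\acs'$ is realised by a contact structure obtained from $\xi$ by connected sum with the Stein fillable contact manifold $(S^{2q{+}1},\xi_d)$, as required.

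The only point demanding attention is the compatibility in the last step: one needs that the contact connected sum genuinely produces a contact manifold whose underlying almost contact structure is the topological connected sum of the almost contact structures on the two summands. This is classical, but it is the one place in the argument where we must use a contact-geometric rather than homotopy-theoretic input; everything else is already packaged in Lemma \ref{lem:stable_to_unstable} and Corollary \ref{cor:h-principle}.
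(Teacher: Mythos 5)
Your proof is essentially the same as the paper's, which also combines Lemma \ref{lem:stable_to_unstable} with Corollary \ref{cor:h-principle} to obtain a Stein fillable $(S^{2q{+}1},\xi_d)$ and then forms the contact connected sum with $(M,\xi)$; you merely spell out the intermediate steps more fully. One small quibble: the contact connected sum is realised by attaching a Weinstein $1$-handle to a collar of $M \sqcup S^{2q{+}1}$ (equivalently an index-$0$ contact surgery), not by a ``contact $0$-handle,'' but this does not affect the argument.
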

\begin{proof}
The proof is a simple combination of the proof of 
Lemma~\ref{lem:stable_to_unstable} and Corollary~\ref{cor:h-principle}:
the almost contact structures found on $S^{2q{+}1}$ in the proof of
Lemma~\ref{lem:stable_to_unstable} are Stein fillable contact
structures, and the boundary connect sum of two Stein 
fillings is a Stein filling.  
\end{proof}

Using the notation and terminology of Section \ref{sec:appendix}, we
obtain the following bordism characterisation of Stein fillability,
proving (an expanded version of) Theorem~\ref{thm:main}:

\begin{Theorem} [Filling Theorem] \label{thm:Stein}
%%%%%%%%%%%%%%%%%
Let $(M, \varphi)$ be a closed almost contact $(2q{+}1)$-manifold with
induced stable complex structure $\scxs$ and complex normal $(q{-}1)$-type $(B^{q{-}1}_{\scxs},
\eta^{q-1}_{\scxs})$. If $q\geq2$, then the following are equivalent:
\begin{enumerate}
\item
$(M, \varphi)$ admits a Stein-fillable contact structure;
\item
for any $\scxs$-compatible normal $(q{-}1)$-smoothing ${\bar {\zeta}} \colon M \to
B^{q{-}1}_{\zeta}$, we have
\[ 
[M, {\bar {\zeta}}] = 0 \in \Omega_{2q{+}1}(B^{q{-}1}_{\zeta}; \eta^{q{-}1}_{\zeta});
\]
\item
for some stable complex bundle $(B, \eta)$ and some
$\scxs$-compatible normal $(q{-}1)$-smoothing ${\bar {\zeta}} \colon M \to B$, we have
\[ [M, {\bar {\zeta}}] = 0 \in \Omega_{2q{+}1}(B; \eta). \]
\end{enumerate}
\end{Theorem}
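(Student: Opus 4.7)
The plan is to reduce Theorem \ref{thm:Stein} to its topological counterpart, the Filling Lemma \ref{lem:topological_filling}, by applying Eliashberg's $h$-principle (Corollary \ref{cor:h-principle}) to upgrade an almost complex filling satisfying the appropriate handle-index restriction into an honest Stein filling. Once this translation is set up, the proof reduces to invoking the results already in hand.

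For the implication $(1) \Rightarrow (2)$, I would begin with a Stein filling $(W, \acxs)$ of a contact structure $\xi$ representing $\acs$. By the defining property of a Stein domain, $W$ admits a strictly plurisubharmonic function with $\partial W$ as a regular level set; after a generic perturbation, its critical points have indices bounded by $\tfrac{1}{2}\dim_\R W = q{+}1$, so $W$ has a handle decomposition using only handles of index $\leq q{+}1$. Since the underlying almost complex structure $\acxs$ restricts to an almost contact structure on $M$ homotopic to $\acs$ (being precisely $\acxs(TM) \cap TM$), the hypothesis of Lemma \ref{lem:topological_filling}(1) is satisfied, and the lemma delivers the vanishing of $[M, \bscxs]$ for every $\scxs$-compatible normal $(q{-}1)$-smoothing.

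The implication $(2) \Rightarrow (3)$ is immediate by choosing $(B, \eta) = (B^{q-1}_\scxs, \eta^{q-1}_\scxs)$.

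For the main direction $(3) \Rightarrow (1)$, I would first apply $(3) \Rightarrow (1)$ of Lemma \ref{lem:topological_filling} to the given $(B, \eta)$ and $\bscxs$ to produce a compact almost complex $(2q{+}2)$-manifold $(W, \acxs)$ with $\partial W = M$, with $\acxs|_M$ representing $\acs$, and with a handle decomposition containing only handles of index $\leq q{+}1$. Then I would invoke Corollary \ref{cor:h-principle} to obtain an almost complex structure $\widetilde{\acxs}$ on $W$ homotopic to $\acxs$ such that $(W, \widetilde{\acxs})$ is a Stein filling. Since $\acxs$ and $\widetilde{\acxs}$ are homotopic, the induced contact structure $\xi = \widetilde{\acxs}(TM)\cap TM$ on $M$ has underlying almost contact structure homotopic to $\acs$, so $(M, \acs)$ is indeed Stein fillable. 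The main conceptual subtlety — already absorbed into Lemma \ref{lem:topological_filling} — is that the filling must represent the precise homotopy class $\acs$ rather than merely its stabilisation $\scxs$; this is handled by boundary-connect-summing with explicit fillings of exotic almost contact structures on $S^{2q{+}1}$, and the Stein-fillable version of that step is encapsulated in Lemma \ref{lem:stable_to_unstable_Stein}, since the boundary connect sum of two Stein domains carries a canonical Stein structure.
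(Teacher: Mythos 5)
Your proposal is correct and follows essentially the same route as the paper: both reduce the theorem to the Filling Lemma \ref{lem:topological_filling} plus Eliashberg's $h$-principle (Corollary \ref{cor:h-principle}), with Lemma \ref{lem:stable_to_unstable_Stein} used (or, as you correctly observe, already built into the proof of Lemma \ref{lem:topological_filling}(3)$\Rightarrow$(1)) to ensure the filling realises the unstable almost contact class $\acs$ rather than just its stabilisation $\scxs$. Your version simply spells out the three implications more explicitly than the paper's two-sentence proof.
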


\begin{proof}
%%%%%%%
For stable almost contact structures this is just a combination of Lemma
\ref{lem:topological_filling} and Eliashberg's $h$-principle
(cf.\ Corollary \ref{cor:h-principle}). Lemma
\ref{lem:stable_to_unstable_Stein} then implies that any almost contact
structure in a given stable class can be realised as a Stein fillable
contact structure, as soon as one can.
\end{proof}
Similar arguments provide

\begin{Theorem} [Surgery Theorem]\label{thm:bordism}
%%%%%%%%%%%%%%%%%%%%%%%%%%%
Let $(M_0, \acs_0)$ and $(M_1, \acs_1)$ be almost contact manifolds of
dimension $2q{+}1\geq 5$ with associated stable complex structures $\scxs_0$
and $\scxs_1$.  Suppose for $i = 0, 1$, that $\bscxs_i \colon M_i \to B$ are
$\scxs_i$-compatible normal $(q-1)$-smoothings in a stable complex
bundle $(B, \eta)$ such that
\[ [M_0, \bscxs_0] = [M_1, \bscxs_1] \in \Omega_{2q{+}1}(B; \eta).\]
  Then $(M_0, \acs_0)$ admits a contact structure
  if and only if $(M_1, \acs_1)$ does. Moreover, $(M_0, \acs_0)$ admits a
  fillable contact structure in any sense (cf.\
  display~\eqref{eq:contactflavours} above) if and only if $(M_1, \acs_1)$
  does.
\end{Theorem}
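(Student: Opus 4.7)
The plan is to convert the bordism hypothesis into a concrete almost complex cobordism carrying a controlled handle decomposition, and then propagate the contact structure and any filling across this cobordism via Eliashberg's $h$-principle. The two ingredients which make this work — the Unstable Surgery Lemma and Theorem~\ref{thm:h-principle} — have already been established, so the proof becomes essentially a matter of assembling them.

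First I would apply the Unstable Surgery Lemma~\ref{lem:unstable_bordism} to the hypothesis $[M_0, \bscxs_0] = [M_1, \bscxs_1]$ in order to produce an almost complex bordism $(W, \acxs; (M_0, \acs_0), (M_1, \acs_1))$ which, relative to either boundary component, admits a handle decomposition involving only handles of index $k \leq q+1$. This step absorbs all of the homotopy-theoretic and stabilisation issues: in particular, the almost complex structure $\acxs$ genuinely restricts to $\acs_0$ and $\acs_1$ on the two boundary components, not merely to stably equivalent representatives.

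Suppose next that $(M_0, \acs_0)$ is realised by a contact structure $\xi_0$. I would then attach the handles of $W$ to $M_0$ in order of increasing index. Each such attachment is an almost complex $k$-surgery with $k \leq q$, and Theorem~\ref{thm:h-principle} extends the contact structure across the trace of the surgery. Iterating through the finitely many handles yields a contact structure $\xi_1$ on $M_1$ whose underlying almost contact structure is precisely $\acs_1$. The reverse implication is obtained symmetrically by running the same argument on the handle decomposition of $W$ relative to $M_1$.

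For the fillability assertions, suppose $(W_0, J_0)$ is a filling of $(M_0, \xi_0)$ of a given flavour (Stein, exact, or strong symplectic). Gluing $W_0$ to $W$ along $M_0$ produces a compact manifold $W_0 \cup_{M_0} W$ bounding $M_1$, whose handle decomposition is obtained by appending the handles of $W$ to those of $W_0$; the newly added handles all have index at most $q+1$. Iterated application of Theorem~\ref{thm:h-principle}, together with the remark following it covering the exact and strong symplectic cases, extends $J_0$ to a filling of $(M_1, \xi_1)$ of the same flavour. I do not anticipate any genuine obstacle: the only potentially subtle point, matching the prescribed homotopy class $\acs_1$ on the nose rather than up to stabilisation, has already been handled by Lemma~\ref{lem:unstable_bordism}, which internally uses Lemma~\ref{lem:stable_to_unstable_Stein} and the Stein fillings of the exotic almost contact spheres provided there.
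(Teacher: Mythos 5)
Your proof is correct and takes essentially the same route as the paper: convert the bordism hypothesis into an almost complex cobordism with a controlled handle decomposition, then propagate contact structures and fillings via Eliashberg's $h$-principle (Theorem~\ref{thm:h-principle}). The only organizational difference is that you invoke the Unstable Surgery Lemma~\ref{lem:unstable_bordism} as a single black box, whereas the paper applies the Stable Surgery Lemma~\ref{lem:stable_bordism} to produce a stable almost complex cobordism, obtains contact structures in the correct stable class, and then corrects the unstable homotopy class afterwards using Lemma~\ref{lem:stable_to_unstable_Stein} (connected-summing with Stein fillable structures on exotic contact spheres). Since the Unstable Surgery Lemma is itself proven from the Stable Surgery Lemma together with Lemmas~\ref{lem:destabilising} and~\ref{lem:stable_to_unstable}, your packaging absorbs exactly the fix-up step the paper performs in-line; the two arguments rely on the same ingredients, and yours is arguably slightly cleaner to read.
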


\begin{proof}
%%%%%%%
	By Lemma \ref{lem:stable_bordism}, there is a $(B,
        \eta)$-bordism $(W, \bar \scxs_W)$ between $(M_0, \bscxs_0)$
        and $(M_1, \bscxs_1)$ such that $(W, \bar \scxs_W)$ is
        obtained from $(M_i, \bar \zeta _i) \times [0, 1]$, $i = 0, 1$,
        by attaching $k$-handles, $k \leq q{+}1$, over which the almost
        complex structure extends.  The result now follows from
        Theorem~\ref{thm:h-principle} above. This then gives contact
        structures in the desired stable class of almost contact
        structures. However, by Lemma \ref{lem:stable_to_unstable_Stein} one
        can then realise all almost contact structures via connected
        sum with certain contact structures on spheres. As all these
        contact structures are Stein fillable, this does not affect
        the fillability of the contact structures.
\end{proof}
\begin{Remark}
  The idea of constructing contact structures via surgery techniques is not
  new, and Geiges and Thomas, in particular, have employed such methods to
  prove the existence of contact structures under various topological
  assumptions. Indeed, using the explicit description of normal $1$-types
  given in Lemma \ref{lem:1type}, one can deduce the Bordism Theorem of
  \cite{Geiges01} as a special case of Theorem \ref{thm:bordism}. The main
  benefit of Theorem \ref{thm:bordism} is that it provides a unified approach
  to this point of view without making any assumptions on the almost contact
  structures involved.  
\end{Remark}

In the following sections we will use the Filling Theorem above to produce 
Stein fillable contact structures and obstructions to Stein fillability.
The Surgery Theorem, on the other hand, is useful
for finding contact structures on manifolds which 
cannot carry Stein fillable structures as we now explain.
Let $\beta$ denote a class of contact structures which is closed under
Weinstein handle attachment and which includes Stein fillable
contact structures; for example $\beta$ could be the class of 
%exactly-fillable contact structures or the class of 
symplectically  fillable contact structures.  We define
\[ \Omega^\beta_{2q{+}1}(B; \eta) \subset \Omega_{2q{+}1}(B; \eta) \]
to be the set of bordism classes with representatives $\bscxs \colon N \to B$
such that $\bscxs$ is $\scxs$-compatible and such that $(N, \scxs)$ admits
a contact structure $\xi$ in the class $\beta$.  
We emphasise that here we 
make no connectivity assumption on the map $\bscxs \colon N \to B$.

\begin{Corollary} \label{cor:alpha-bordism}
%%%%%%%%%%%%%%%%%
Let $(M, \acs)$ be an almost contact $(2q{+}1)$-manifold with associated
stable complex structure $\scxs$ and let $(B, \eta)$ be a stable
complex bundle.  If $q \geq 2$, the map $\bscxs \colon M \to B$ is a
$\scxs$-compatible normal $(q-1)$-smoothing and
\[  [M, \bscxs] \in \Omega_{2q{+}1}^\beta(B; \eta), \]
then $(M, \scxs)$ admits a contact structure in the class $\beta$.
\end{Corollary}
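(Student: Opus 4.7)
The plan is to reduce Corollary \ref{cor:alpha-bordism} to an application of Theorem \ref{thm:bordism} (the Surgery Theorem). By the definition of $\Omega^\beta_{2q{+}1}(B; \eta)$, the hypothesis $[M, \bscxs] \in \Omega^\beta_{2q{+}1}(B; \eta)$ provides a representative $(N, \bscxs_N)$ of $[M, \bscxs]$ such that the underlying manifold $N$ carries a contact structure $\xi_N$ in the class $\beta$. However, as emphasised just before the statement of the corollary, no connectivity hypothesis is imposed on $\bscxs_N$, so $\bscxs_N$ is in general not a normal $(q{-}1)$-smoothing and Theorem \ref{thm:bordism} cannot be invoked directly.

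To remedy this, I would apply Proposition \ref{prop:Kreck_surgery} to perform a finite sequence of $(B, \eta)$-surgeries on $(N, \bscxs_N)$, producing a normal $(q{-}1)$-smoothing $\bscxs_{N'} \colon N' \to B$ (i.e.\ a $q$-equivalence, matching $[n/2] = q$ for $n = 2q{+}1$) with $[N', \bscxs_{N'}] = [N, \bscxs_N] = [M, \bscxs]$. Kreck's proof proceeds by surgeries along embedded spheres of dimension at most $q{-}1$, attaching handles of index at most $q$. Each such surgery can be realised as a contact surgery preserving the class $\beta$, as follows: the attaching sphere, being of dimension $\leq q{-}1 < q$, may be isotoped to an isotropic embedding in $N$; the $(B, \eta)$-structure on the surgery trace equips the trace with a stable complex structure which, by Lemma \ref{lem:destabilising}, destabilises to an almost complex structure extending the given almost contact structure on $N$. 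The hypotheses of Theorem \ref{thm:h-principle} are then in force, so the surgery can be realised by a Weinstein handle attachment, and the resulting contact structure remains in the class $\beta$ by the standing assumption that $\beta$ is closed under Weinstein handle attachment.

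With $(N', \bscxs_{N'})$ now a $\scxs_{N'}$-compatible normal $(q{-}1)$-smoothing that is $(B, \eta)$-bordant to $(M, \bscxs)$ and whose underlying almost contact structure $\acs_{N'}$ is realised by a $\beta$-fillable contact structure, Theorem \ref{thm:bordism} applied to $(M, \acs)$ and $(N', \acs_{N'})$ yields the desired conclusion that $(M, \acs)$ also admits a contact structure in the class $\beta$.

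The main point that requires care is verifying that Kreck's purely topological sequence of $(B, \eta)$-surgeries can in fact be carried out as Weinstein contact surgeries preserving $\beta$-fillability at every step. This rests on two compatibilities: the dimensional alignment that the surgery indices needed to reach a $(q{-}1)$-smoothing (namely $\leq q$) are exactly those covered by the subcritical isotropic range of Theorem \ref{thm:h-principle}, and the stable-to-unstable translation for almost complex structures on the surgery traces established in Section \ref{subsec:stable_and_unstable_surgery}.
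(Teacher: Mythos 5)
Your proof is correct and follows essentially the same route as the paper: extract a $\beta$-fillable representative $(N,\bscxs_N)$ from the definition of $\Omega^\beta_{2q{+}1}(B;\eta)$, apply Proposition~\ref{prop:Kreck_surgery} to reach a normal $(q{-}1)$-smoothing via subcritical $(B,\eta)$-surgeries, realise those surgeries as Weinstein handle attachments via Lemma~\ref{lem:destabilising} (together with the stable-to-unstable translation of Section~\ref{subsec:stable_and_unstable_surgery}, where the paper also explicitly cites Lemma~\ref{lem:stable_to_unstable}) and Theorem~\ref{thm:h-principle}, and conclude with Theorem~\ref{thm:bordism}. Your dimension count on the surgery spheres ($\leq q{-}1$, giving handles of index $\leq q$) is also the one the paper's argument relies on.
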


\begin{proof}
%%%%%%%%%%%%%
By assumption, there is a contact manifold $(N, \xi)$ with associated stable complex
structure $\scxs_N$ and with a $\scxs_N$-compatibe $(B, \eta)$-structure
$\bscxs_N \colon N \to B$ such that $[N, \bscxs_N] = [M, \bscxs] \in \Omega_{2q{+}1}(B; \eta)$.
By Proposition \ref{prop:Kreck_surgery}, we may perform $(B, \eta)$-surgeries of dimension 
$q$ or less on $\bscxs \colon N \to B$ to obtain a $(q-1)$-smoothing 
$\bscxs_{N'} \colon N' \to B$, with induced stable complex structure $\scxs'$ say.  
By Lemmas \ref{lem:destabilising} and \ref{lem:stable_to_unstable} and Theorem \ref{thm:h-principle}, 
$(N', \acs')$ admits a contact structure $\xi'$ in the class $\beta$ and with 
associated almost contact structure $\acs'$ which stabilises to $\scxs'$.
Applying Theorem \ref{thm:bordism} to $(N', \acs')$ and $(M, \acs)$,
we deduce that $(M, \acs)$ admits a contact structure in the class $\beta$.
\end{proof}

The line of reasoning from the proof of Corollary \ref{cor:alpha-bordism}
was pursued in \cite{BCSS2} for $\beta$ the class of all contact structures.
There contact structures on manifolds of the form $M\times S^2$ 
with $M$ contact were shown to exist by 
finding Stein cobordisms from $M\times T^2$ and applying a result of 
Bourgeois \cite{bourgeois} which provides a contact structure for this latter manifold once 
$M$ is contact. 
More generally, Corollary \ref{cor:alpha-bordism}
gives a framework for approaching the 
symplectic version of the Stein Realisation Problem~\ref{prob:SteinReal}:

\begin{Problem}[Symplectic Relaization Problem]
%%%%%%%%%%%%%%%
Determine which almost contact structures on a given manifold can be
realised by strongly/exactly fillable contact structures. Does the
answer depend on whether one considers strong or exact fillings?
\end{Problem}

% subsection surgery_theorems (end)
%%%%%%%%%%%%%%%%%%%%%%%%%%%%%%%%%%%%%%%%%%%%%%%%%%

\section{Simply connected $7$-manifolds} %(fold)                                     
\label{sec:7_manifolds}
%%%%%%%%%%%%%%%%%%%%%%%%%%%%%%%%%%%%%%%%%%%%%%%%%%%%%%%%%%%%%%%%%%%%%%%%%%%          
As an application of the methods developed in
Section~\ref{sec:complex}, we now give a proof of
Theorem~\ref{thm:seven_mfold_contact}.  The proof will show that the
Stein fillability obstruction of Theorem~\ref{thm:Stein} vanishes by
showing that the relevant bordism group is itself
trivial.

Before turning to the computation of the bordism group, however, we show that every
7-manifold considered in Theorem~\ref{thm:seven_mfold_contact} admits an
almost contact structure.

To start the argument, recall that a manifold $M$ admits a spin$^c$
structure, that is, a lift of the structure group of $TM$ from $SO(n)$
to the group $Spin ^\C(n)$, if and only if the second Stiefel-Whitney
class $w_2(M)\in H^2(M; \Z_2 )$ admits an integral lift. (The Lie
group $Spin ^\C(n)$ can be defined as the extension of $SO(n)$ by
$S^1$ with the property that $Spin ^\C(n)\to SO(n)$ is the unique
nontrivial principal $S^1$-bundle over $SO(n)$.)  Since each manifold
$M$ in Theorem \ref{thm:seven_mfold_contact} is simply connected and
has torsion free $\pi _2(M)\cong H_2(M)$, the mod 2 reduction map
$H^2(M)\to H^2 (M; \Z_2)$ is onto, and so $M$ admits a spin$^c$
structure.

Notice that $U(n)\subset SO(2n)$, and since any $S^1$-bundle over
$U(n)$ is trivial (by the fact that $H^2(U(n))=0$), we have that the
restriction of the bundle $Spin ^\C (2n) \to SO(2n)$ over $U(n)$ is
trivial.  Consequently $Spin ^\C (2n)$ contains $U(n)\times S^1$, so
in particular $U(n)$ embeds into $Spin ^\C (2n)$. This embedding
provides a homomorphism of topological groups $U\to Spin ^\C$. 
Similarly, $SU(n)$ embeds into
$SO(2n)$, and since $SU(n)$ is simply connected, this embedding lifts
to an embedding $SU(n)\to Spin (2n)$ (recall that $Spin (2n)\to
SO(2n)$ is the nontrivial double cover of $SO(2n)$). This construction
then provides a homomorphism of topological groups $SU\to Spin$.

We first show that every spin$^c$ structure on a $7$-manifold is
induced by an almost contact structure.

\begin{Lemma} \label{lem:7d_acs_existence}
A compact oriented $7$-manifold $X$ admits an almost contact structure
if and only if it admits a spin$^c$ structure.  Moreover, any spin$^c$
structure on $X$ is induced from some almost contact structure on $X$.
\end{Lemma}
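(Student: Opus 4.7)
The plan is to handle both directions of the equivalence (along with the ``moreover'' clause) by obstruction theory applied to the tower $BU(3)\to B\Spin^{\C}(6)\to BSO(6)$. The forward direction is immediate: an almost contact structure splits $TX$ as $\xi\oplus\trivR$ with $\xi$ a complex rank-$3$ bundle, so $c_1(\xi)\in H^2(X;\Z)$ is an integral lift of $w_2(X)=w_2(\xi)$ and $X$ is $\Spin^{\C}$.

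For the converse and the ``moreover'' part, I start with a $\Spin^{\C}$ structure $\sigma$ on $X$ and aim to construct an almost contact structure inducing $\sigma$. Since $X^7$ is odd-dimensional and oriented, $\chi(X)=0$, so $TX$ admits a nowhere-vanishing section and splits as $TX\cong\xi\oplus\trivR$ with $\xi$ an oriented rank-$6$ bundle. The structure $\sigma$ then determines a $\Spin^{\C}$ structure on $\xi$, i.e.\ a lift $\bar{\xi}\colon X\to B\Spin^{\C}(6)$ of the classifying map. Using the embedding $U(3)\hookrightarrow\Spin^{\C}(6)$ recalled just before the lemma, it suffices to lift $\bar\xi$ further along $BU(3)\to B\Spin^{\C}(6)$; such a lift produces a complex structure on $\xi$, hence an almost contact structure on $X$, and by construction its associated $\Spin^{\C}$ structure is $\sigma$.

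The key step is to show that the fibre $\Spin^{\C}(6)/U(3)$ of $BU(3)\to B\Spin^{\C}(6)$ is $6$-connected, for then the only potential obstruction to the lift---lying in $H^{8}(X;\pi_{7}(\Spin^{\C}(6)/U(3)))$---vanishes since $\dim X=7$. I verify this by comparing two fibre sequences over $BSO(6)$:
\[
\xymatrix{
SO(6)/U(3) \ar[r]\ar[d] & BU(3) \ar[r]\ar[d] & BSO(6) \ar@{=}[d] \\
K(\Z,2) \ar[r] & B\Spin^{\C}(6) \ar[r] & BSO(6).
}
\]
Under the classical identification $SO(6)/U(3)\cong\C\p^{3}$, the left vertical arrow is homotopic to the natural inclusion $\C\p^{3}\hookrightarrow\C\p^{\infty}=K(\Z,2)$, which is a $7$-equivalence by cellular approximation since the next cell of $\C\p^{\infty}$ lies in dimension $8$. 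Comparing long exact sequences identifies $\Spin^{\C}(6)/U(3)$ with the homotopy fibre of this $7$-equivalence, which is therefore $6$-connected.

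The step I expect to require the most care is the identification of the left vertical fibre map with the canonical inclusion $\C\p^{3}\hookrightarrow K(\Z,2)$. Unwinding definitions, this reduces to checking that the pullback of the universal class $c_{1}^{\Spin^{\C}}\in H^{2}(B\Spin^{\C}(6);\Z)$ along $BU(3)\to B\Spin^{\C}(6)$ is the universal first Chern class, and that its restriction to the fibre $SO(6)/U(3)\cong\C\p^{3}$ generates $H^{2}(\C\p^{3};\Z)$; both facts flow from the description of $\C\p^{3}$ as the space of orientation-compatible orthogonal complex structures on $\R^{6}$ equipped with its tautological complex structure.
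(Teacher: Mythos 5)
Your proof is correct, and it takes a genuinely different route from the paper's. The paper reduces to the \emph{stable} setting via Lemma~\ref{lem:stable_to_unstable}\eqref{lem:stable_to_unstable:onto} and then works with the fibration $\Spin^{\C}/U \to BU \to B\Spin^{\C}$, where the fibre $\Spin^{\C}/U$ is only $5$-connected with $\pi_6(\Spin^{\C}/U)\cong\Z$. The primary obstruction therefore lives in $H^7(X;\Z)$, and the paper has to do real work: it identifies the universal obstruction $\alpha \in H^7(B\Spin^{\C})$ as a transgression and proves $2\alpha=0$ by a Chern-class computation over $S^6$, then invokes torsion-freeness of $H^7(X)\cong \Z$ for oriented $X^7$. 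You instead stay \emph{unstable}, using $BU(3)\to B\Spin^{\C}(6)$. Your fibre $\Spin^{\C}(6)/U(3)$ is $6$-connected---one degree better than the stable fibre---so all obstructions sit in $H^{\geq 8}(X;-)=0$ and vanish for purely dimensional reasons, with no torsion argument needed. This is cleaner, at the cost of the unstable connectivity computation, which your fibration-comparison argument handles correctly. (It also bypasses the need to invoke the destabilisation Lemma~\ref{lem:stable_to_unstable}, since you land directly on a $U(3)$-reduction of $TX$.)

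One correction to your verification sketch, which luckily does not affect the conclusion: you claim that $c_1$ pulled back to $BU(3)$ restricts to a \emph{generator} of $H^2(\C\p^3;\Z)$. In fact it restricts to \emph{twice} a generator. One can see this from the Serre spectral sequence of $\C\p^3 \to BU(3) \to BSO(6)$: the class $W_3 = \beta w_2 \in H^3(BSO(6);\Z)\cong\Z_2$ must die (since $H^3(BU(3))=0$), and the only available differential is $d_3\colon E_3^{0,2}\cong\Z \to E_3^{3,0}\cong\Z_2$; surjectivity forces $E_\infty^{0,2}=2\Z$, so the edge map $H^2(BU(3))\to H^2(\C\p^3)$ has image $2\Z$. (Equivalently: the underlying real bundle of the tautological rank-$3$ complex bundle over $\C\p^3$ is trivial, so $w_2 = c_1 \bmod 2$ must vanish.) However, the determinant class $c_1^{\Spin^{\C}}$ \emph{also} restricts to twice a generator of $H^2(K(\Z,2))$, because the central $S^1 \hookrightarrow \Spin^{\C}(6)\xra{\det} U(1)$ is the squaring map. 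The two factors of $2$ cancel in the naturality square, so the induced map on fibres $\C\p^3 \to K(\Z,2)$ is indeed classified by a generator and is a $7$-equivalence, exactly as you need. Worth fixing the intermediate claim, though, since taken literally it is false and could send a careful reader in circles.
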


\begin{proof}
\dcedit{
By Lemma~\ref{lem:stable_to_unstable}~\eqref{lem:stable_to_unstable:onto},
any stable complex structure $\scxs$ on
$X$ can be destabilised to an almost contact structure $\acs$.
Hence it is enough to show that $X$ admits a stable complex structure
if and only if $X$ admits a stable spin$^c$ structure, that is, a map
into $BSpin ^\C$ covering the map $X\to BSO$ given by the stable
tangent bundle.}
Since a stable complex structure on $X$ induces a
spin$^c$ structure on $X$, we only need to show that any stable spin$^c$ structure on
$X$ can be lifted to a stable complex structure.

The homomorphism of topological groups $U \to Spin^\C$ induces a map
of classifying spaces which gives a fibre bundle
\begin{equation} \label{eq:BU_to_BSpinC}
        Spin^\C/U \xra{i} BU \xra{} BSpin^\C.
\end{equation}
By Bott periodicity the quotient $Spin^\C/U$ is $5$-connected and
$\pi_6(Spin^\C/U) \cong \Z.$ Suppose that $\theta \colon X \to BSpin^\C$ is a
spin$^c$ structure on $X$.  We must show that the following lifting
problem has a solution:
\[  \xymatrix{ & BU \ar[d] \\
X \ar@{.>}[ur]\ar[r]^(0.35)\theta & BSpin^\C.} \]
Since $Spin^\C/U$ is $5$-connected, the primary obstruction to lifting
$\theta$ is a cohomology class $\theta^*(\alpha) \in H^7(X)$, where
we have identified $\pi_6\ (\Spin^\C/U)$ with $\Z$ and the universal
obstruction class $\alpha \in H^7(BSpin^\C)$ is defined below.
We shall show that $2 \alpha = 0$.  Since $H^7(X)$ is torsion
free, it follows that $\theta^*(\alpha) = 0$ and hence $\theta$ lifts
to a stable complex structure on $X$.

It remains to define $\alpha$ and to prove that $2 \alpha = 0$.  Let
$x \in \pi_6(Spin^\C/U) \cong \Z$ be a generator.  Since $Spin^\C/U$
is $5$-connected, there is a generator $\wh x \in H^6(Spin^\C/U)$
such that $\an{\wh x, \rho(x)} = 1$  where $\rho \colon
\pi_6(Spin^\C/U) \to H_6(Spin^\C/U)$ is the Hurewicz homomorphism.
The class $\wh x$ is transgressive in the Leray-Serre cohomology
spectral sequence of the fibration \eqref{eq:BU_to_BSpinC}, and we
define
\[ \alpha : = \tau(\wh x) \in H^7(BSpin^\C), \]
where $\tau \colon H^6(Spin^\C/U) \to H^7(BSpin^\C)$ is the
transgression homomorphism.  Since the kernel of $\tau$ is the image
of the homomoprhism $i^* \colon H^6(BU) \to H^6(Spin^\C/U)
\cong \Z$, it suffices to show that the image of $i^*$ is the subgroup
of index two.  Now $H^*(BU) = \Z[c_1, c_2, c_3, \dots]$ is the
polynomial algebra on the Chern classes and the composition $S^6
\xra{x} Spin^\C/U \xra{i} BU$ determines the stable complex vector
bundle $x^*i^*(EU)$ over $S^6$ where $EU \to BU$ is the universal
bundle.  By \cite[Chapter7, Corollary 9.9]{Husemoller94}, every complex bundle
$E$ over $S^6$ is such that $c_3(E) \in 2 \cdot H^6(S^6)$ and moreover there is a
complex bundle $E_0$ over $S^6$ where $c_3(E_0)$ is twice a generator
of $H^6(S^6)$.  It follows that $i^*(H^6(BU)) = 2 \cdot
H^6(Spin^\C/U)$ and the lemma follows.
\end{proof}

We now reduce the proof of Theorem~\ref{thm:seven_mfold_contact} to
the calculation of certain bordism groups.
Let $\acs$ be an almost contact structure on $M$, with associated
stable complex structure $\scxs$, let $H = H_2(M)$ and
let $\gamma$ be the complex line bundle over the Eilenberg-MacLane
space $K(H, 2)$ with $c_1(\gamma) = -c_1(\scxs) \in H^2(K(H, 2); \Z)
\cong H^2(M; \Z)$.  By Lemma \ref{lem:2type}, the complex normal 2-type
of the stably complex manifold $(M, \scxs)$ is
\[ (B^2_{\scxs}; \eta^2_{\scxs}) = (K(H, 2) \times BSU; \gamma \oplus \pi_{SU}) , \]
where $\pi_{SU} \colon BSU \to BU$ is the map induced by the inclusion $SU \to U$ and $\oplus$
denotes the exterior Whitney sum of complex bundles: for further details,
see Section~\ref{subsec:normal_k_types}.
It follows that there is an isomorphism of bordism groups
  \begin{equation}\label{eq:isom}
\Omega_7(B^2_{\scxs}; \eta^2_{\scxs}) \cong \Omega_7^{SU}(K(H, 2); \gamma),
\end{equation}
where the latter group is a certain $\gamma$-twisted $SU$-bordism
group of $K(H, 2)$.  This is the bordism group of triples $(N, f,
\alpha)$ where $N$ is a closed smooth manifold, $f \colon N \to K(H,
2)$ is a map and $\alpha$ is an $SU$ structure on the Whitney sum of
$f^*(\gamma)$ and the stable normal bundle of $N$.

The remainder of this subsection gives the proof of the following proposition.

\begin{Proposition} \label{prop:7bordism}
%%%%%%%%
For any finitely generated free abelian group $H$ and for any complex
line bundle $\gamma$ over $K(H, 2)$, we have $\Omega_7^{SU}(K(H, 2);
\gamma) = 0$.
\end{Proposition}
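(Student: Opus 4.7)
The plan is to compute the group by means of the Atiyah--Hirzebruch spectral sequence.  Since $H$ is finitely generated free abelian of rank $k$, $K(H,2) \simeq (\C\p^\infty)^k$ is simply connected and $H_*(K(H,2); G)$ is concentrated in even degrees for every coefficient group $G$.  The $\gamma$-twisted AHSS
\[
E^2_{p,q} = H_p(K(H,2); \Omega_q^{SU}) \;\Longrightarrow\; \Omega_{p+q}^{SU}(K(H,2); \gamma),
\]
obtained from Pontrjagin--Thom and the ordinary AHSS applied to the Thom spectrum of $\gamma \oplus \pi_{SU}$ over $K(H,2) \times BSU$, combined with the values $\Omega_q^{SU} = 0$ for $q \in \{3,5,6,7\}$ and $\Omega_1^{SU} \cong \Omega_2^{SU} \cong \Z_2$ (cf.\ \cite[Chapter X]{Stong}), shows that the unique entry on the diagonal $p+q=7$ which can be nonzero is
\[
E^2_{6,1} \;\cong\; H_6(K(H,2); \Z_2).
\]

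It remains to show $E^\infty_{6,1} = 0$.  For dimensional reasons the only potentially nonzero differentials at this entry are the incoming $d_2 \colon E^2_{8,0} = H_8(K(H,2); \Z) \to E^2_{6,1}$ and the outgoing $d_2 \colon E^2_{6,1} \to E^2_{4,2} = H_4(K(H,2); \Z_2)$.  The classical identification of $d_2$ in the $MSU$-AHSS as $\mathrm{Sq}^2_* \circ r$ (mod-$2$ reduction $r$ followed by the dual Steenrod square), twisted by the Cartan-formula correction $\mathrm{Sq}^2(U) = w_2(\gamma) \cdot U = c_1(\gamma) \cdot U$ on the Thom class $U$, shows that each $d_2$ assumes the form $\mathrm{Sq}^2_* + (c_1(\gamma) \cap {-})$ on the $\Z_2$-homology of $(\C\p^\infty)^k$.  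A direct computation using $\mathrm{Sq}^2(x_i) = x_i^2$ on the polynomial generators of $H^*((\C\p^\infty)^k; \Z_2)$ shows that, for every $c_1(\gamma) \in H \cong H^2(K(H,2); \Z)$, the image of the incoming $d_2$ equals the kernel of the outgoing $d_2$, forcing $E^\infty_{6,1} = 0$.

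The main technical work lies in carrying out this kernel-equals-image calculation in the multivariate setting and in pinning down the $\gamma$-twist in the $d_2$-formulas precisely.  The $k=1$ case $H = \Z$ already illustrates the two distinct scenarios, distinguished by the parity of $c_1(\gamma) \bmod 2$: when $c_1(\gamma)$ is even the incoming $d_2$ alone is surjective, and when $c_1(\gamma)$ is odd the outgoing $d_2$ alone is injective.  An alternative, more geometric route more in keeping with the paper's methods would be to apply Kreck's surgery below the middle dimension (Proposition~\ref{prop:Kreck_surgery}) together with Lemma~\ref{lem:2type} to reduce any given class to a normal $2$-smoothing supported on a $7$-manifold of particularly simple form, and then to exhibit an explicit almost complex nullbordism by hand.
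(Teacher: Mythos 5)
Your overall strategy — Pontrjagin--Thom, the Atiyah--Hirzebruch spectral sequence, observing that the only surviving entry on the $p+q=7$ diagonal is $E^2_{6,1}\cong H_6(K(H,2);\Z_2)$, identifying the $d_2$-differentials as dualised $Sq^2$, and then showing $E^3_{6,1}=0$ — is exactly the strategy the paper takes, and your $\Omega^{SU}_*$ inputs and the identification of the twisted differential as $(Sq^2)^*+(c_1(\gamma)\cap-)$ are both correct. (The paper phrases the AHSS on the Thom space $T(\gamma)$ rather than as a twisted AHSS over $K(H,2)$, but under the Thom isomorphism these are the same computation: $Sq^2(Uy)=w_2(\gamma)Uy+U\,Sq^2y$ is precisely where your $c_1(\gamma)$-correction comes from.) Your $k=1$ dichotomy by the parity of $c_1(\gamma)$ also agrees with what the paper finds for $H=\Z$.

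However, there is a genuine gap at the step you defer as ``the main technical work.'' For $k>1$ and an arbitrary $c_1(\gamma)\in H$, the assertion that ``a direct computation using $Sq^2(x_i)=x_i^2$ shows that the image of the incoming $d_2$ equals the kernel of the outgoing $d_2$'' is not straightforward, and you do not indicate what makes it tractable. The paper's proof of the exactness (Lemma~\ref{lem:d_2-is-exact}) hinges on two structural ideas your proposal does not surface: (i) packaging the dual-$Sq^2$ maps on the even homology of $T(\gamma)$ as a \emph{chain complex}, using the Adem relation $Sq^2Sq^2=Sq^3Sq^1$ together with the fact that $H^*(T(\gamma);\Z_2)$ is concentrated in even degrees so $Sq^1=0$; and (ii) reducing the computation of $H_3$ of this chain complex to the $k=1$ case by choosing a splitting $H=H_0\oplus\Z$ with $c_1(\gamma)|_{H_0}=0$ (e.g.\ $H_0=\ker(c_1(\gamma):H\to\Z)$ when $c_1(\gamma)\ne0$), which gives $T(\gamma)\wedge S^2\simeq M(\underline{\C}_{K(H_0,2)})\wedge T(\gamma')$ and, via the Cartan formula (again with $Sq^1=0$), a tensor-product decomposition of the chain complex to which the K\"unneth theorem over $\Z_2$ applies. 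Without the Adem/K\"unneth mechanism it is not clear how a hands-on multivariate calculation would close out the $k>1$ case, so you should either supply that calculation or identify this factorisation as the vehicle for the induction from the base case you did analyse.
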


We shall need the following result on $SU$-bordism groups.

\begin{Lemma} \label{lem:Stong}
If $k$ is not divisible by 4 then $\Omega_{2k{+}1}^{SU} = 0$.
\end{Lemma}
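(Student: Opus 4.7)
The plan is to appeal to the classical computation of the $SU$-bordism ring due to Conner-Floyd and tabulated in \cite[Chapter X]{Stong}; the asserted vanishing can be read off directly from the tables of $\Omega_n^{SU}$ compiled there. Since the statement is confined to odd dimensions $2k{+}1$ with $k \not\equiv 0 \pmod 4$, we are in precisely the range where $\Omega_{2k{+}1}^{SU}$ is known to be trivial (the non-vanishing odd-dimensional $SU$-bordism groups, such as $\Omega_1^{SU} = \Z_2$ and $\Omega_9^{SU} = \Z_2$, occur only when $k \equiv 0 \pmod 4$). Thus the proof is essentially citational.

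The underlying computation proceeds in two steps. First, one shows that $\Omega_*^{SU}$ has only $2$-primary torsion. This is standard: the forgetful map $\Omega_*^{SU} \to \Omega_*^U$ and the fact that complex bordism $\Omega_*^U$ is torsion-free and detected by Chern numbers (see Example~\ref{ex:Complex_bordism}) imply via a characteristic number argument that any odd-primary torsion class must already vanish in $\Omega_*^U$, and an analysis of the kernel of forgetting the $SU$-structure shows odd torsion cannot appear in $\Omega_*^{SU}$ either. Second, one computes the $2$-primary part via the Adams spectral sequence for the Thom spectrum $MSU$ at $p=2$, whose $E_2$-page is controlled by the well-known structure of $H^*(MSU; \Z_2)$ as a module over the Steenrod algebra. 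The dimensions of interest in our lemma fall outside the pattern where nontrivial $2$-torsion can survive, yielding $\Omega_{2k{+}1}^{SU} = 0$ whenever $k \not\equiv 0 \pmod 4$.

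The main obstacle, if one were to reproduce the full calculation rather than cite it, would be the careful bookkeeping in the Adams spectral sequence: identifying the relevant Ext groups and ruling out nontrivial $d_r$-differentials in the range of interest. However, since these computations are standard and long-established, the appropriate and efficient approach is simply to invoke the tables in \cite[Chapter X]{Stong}, noting in particular that the pattern of vanishing odd-dimensional $SU$-bordism is $4$-periodic in an appropriate sense, with the only nonzero odd-dimensional groups concentrated in dimensions $8j{+}1$.
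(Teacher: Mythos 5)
Your proof is correct and takes essentially the same citational route as the paper: both appeal to the Conner--Floyd computation of $\Omega_*^{SU}$ as tabulated in \cite[Chapter X]{Stong}. The paper phrases the reduction slightly more explicitly --- it first notes that $\Omega_{2k{+}1}^U = 0$ and that the kernel of $\Omega_*^{SU} \to \Omega_*^U$ is precisely the torsion subgroup, so that $\Omega_{2k{+}1}^{SU}$ is all torsion, and then cites the fact (from \cite[p.\,248]{Stong}) that this torsion vanishes unless $4 \mid k$ --- but the content and the source are the same as in your argument.
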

\begin{proof}
By \cite[p.\,117]{Stong}, we know that $\Omega_*^U = 0$ for any odd
dimension $* = 2k{+}1$.  Also by \cite[p.\,238]{Stong}, the kernel of the
forgetful homomorphism $\Omega_*^{SU} \to \Omega_*^U$ is the torsion
subgroup of $\Omega_{*}^{SU}$.  But by \cite[p.\,248]{Stong}, the
torsion subgroup of $\Omega_{2k{+}1}^{SU}$ vanishes if $k$ is not divisible by 4, concluding
the proof.
\end{proof}

For the case $H=0$ in Proposition \ref{prop:7bordism}, by
Lemma \ref{lem:Stong} implies that $\Omega_7^{SU} = 0$ as required.  Hence we
assume that $H$ is not the zero group.  We wish to compute the
$\gamma$-twisted $SU$-bordism of $K(H, 2)$.  A very similar situation
is discussed in \cite[Section 6]{Kreck-Stolz91} where Kreck and Stolz
compute certain twisted spin bordism groups of $K(\Z, 2)$.  
\dcedit{
Since the
Thom space of the exterior Whitney sum of bundles is homotopy
equivalent to the smash product of the individual Thom spaces, 
%just as in \cite[\S 6]{Kreck-Stolz91}, 
the Pontrjagin-Thom construction gives an isomorphism
\begin{equation} \label{eq:Thom_spectra}
\Omega_*^{SU}(K(H, 2); \gamma) \cong \pi_*(T(\gamma \oplus \pi_{SU})) \cong \pi_*(T(\gamma) \wedge MSU) \cong
\wt\Omega_*^{SU}(T(\gamma)) .
\end{equation}
}
\noindent Here $MSU$ is the Thom spectrum defined by special unitary boridsm,
$T(\gamma)$ is the Thom space of the bundle $\gamma$ over $K(H,
2)$, $\wedge$ denotes the smash product of spectra and $\wt
\Omega_*^{SU}$ denotes reduced special unitary bordism.  As a
consequence of \eqref{eq:Thom_spectra}, there is an Atiyah-Hirzebruch
spectral sequence (AHSS),
\[ E_{p, q}^2 = H_{p+2}(T(\gamma); \Omega_q^{SU}) \Longrightarrow
\Omega_{p+q}^{SU}(K(H, 2); \gamma), \]
which converges to  the associated graded object of a filtration on
$\Omega^{SU}_{p+q}(K(H, 2); \gamma)$.  By the Thom isomorphism, $\wt
H^*(T(\gamma))$ is a free module over $H^*(K(H, 2))$ with generator
the Thom class $U \in H^2(T(\gamma))$ of $\gamma$.  As a consequence,
$H_*(T(\gamma))$ vanishes in odd degrees.  Now, by Lemma
\ref{lem:Stong}, $\Omega_{2k{+}1}^{SU} = 0$ for $k =1,2,3$ and
$\Omega_1^{SU} \cong \Z_2$ by \cite[p.\,248]{Stong}. It follows
that the $7$-line of the $E^2$-page of the AHSS above has only one
non-vanishing term and that is
\[ E^2_{6, 1} = H_8(T(\gamma); \Omega_1^{SU})  \cong H_6(K(H,2);\Omega^{SU}_1). \]
We claim that $E^3_{6, 1} = 0$, which proves Proposition
\ref{prop:7bordism}.  To see that $E^3_{6,1} = 0$, we need to
understand the following differentials in the AHSS, where we use that fact that
$\Omega _2 ^{SU}\cong \Z /2$ by \cite[p.\,248]{Stong}:
\begin{equation} \label{eq:d2}
d^2_{8, 0} \colon H_{10}(T(\gamma)) \to H_8(T(\gamma); \Z_2) ~~
\text{and} ~~ d^2_{6, 1} \colon H_8(T(\gamma); \Z_2) \to H_6(T(\gamma);
\Z_2).
\end{equation}
Since the map $SU \to \Spin$ is a $6$-equivalence, these differentials
for $SU$-bordism will coincide with the corresponding differentials
for spin bordism. The differentials in the spin case have been
computed by Teichner \cite[Lemma 2.3.2]{Teichner92}.  Hence we have
the following lemma.

\begin{Lemma}[{\cite[Lemma 2.3.2]{Teichner92}}] \label{lem:d2}
	Let $\rho_2 \colon H_{*}(T(\gamma)) \to H_{*}(T(\gamma); \Z_2)$ be the
	homomorphism induced by reduction mod~$2$ and let
	$(Sq^2)^* \colon H_{*+2}(T(\gamma); \Z_2) \to H_*(T(\gamma); \Z_2)$ be the
	dual of the Steenrod squaring operation 
	$Sq^2 \colon H^*(T(\gamma); \Z_2) \to H^{*+2}(T(\gamma); \Z_2)$.  Then 
	the differentials in \eqref{eq:d2} above are given by
	\[d^2_{8, 0} = 
(Sq^2)^* \circ \rho_2 \quad \text{and} \quad d^2_{6, 1} = (Sq^2)^*. \]
        \qed
\end{Lemma}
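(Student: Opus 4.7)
The plan is to identify these $d^2$ differentials via the low-dimensional Postnikov tower of the spectrum $MSU$, using the general principle that in the Atiyah–Hirzebruch spectral sequence for a generalised homology theory $E_*$, the $d^2$ differential from the row $H_p(X;\pi_q(E))$ to $H_{p-2}(X;\pi_{q+1}(E))$ is dual to the $k$-invariant $P_q(E)\to\Sigma^{q+2}H\pi_{q+1}(E)$ of the Postnikov tower of $E$, regarded as a stable cohomology operation.

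First I would reduce to the spin case. Since $SU\to\Spin$ is a $6$-equivalence of topological groups, the induced map of Thom spectra $MSU\to MSpin$ is a $6$-equivalence. Hence both the integral and mod $2$ homology of their Postnikov sections agree in the range relevant to $d^2_{8,0}$ and $d^2_{6,1}$, and the $k$-invariants of $MSU$ in this range coincide with those of $MSpin$ under this identification. This reduction is useful because the mod $2$ cohomology of $MSpin$ and its Postnikov invariants have been explicitly computed (e.g.\ via the Anderson–Brown–Peterson splitting).

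Second, I would identify the relevant $k$-invariants. The homotopy groups of $MSpin$ in low degrees are $\pi_0 = \Z$, $\pi_1 = \Z_2$, $\pi_2 = \Z_2$, $\pi_3 = 0$. The first $k$-invariant $k^1\colon H\Z\to\Sigma^2 H\Z_2$ of the Postnikov tower must be a stable operation $H^0(-;\Z)\to H^2(-;\Z_2)$; the only non-trivial such operation is $Sq^2\circ\rho_2$, and its non-vanishing follows from the fact that the generator of $\pi_2(MSpin)=\Z_2$ is detected by a non-trivial Steenrod square on the Thom class (equivalently, by the fact that $\C P^1$ with its spin structure is non-zero in $\Omega_2^{Spin}$). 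Similarly the second $k$-invariant $k^2\colon P_1(MSpin)\to\Sigma^3 H\Z_2$ restricts on the $\Z_2$-summand to $Sq^2\colon H\Z_2\to\Sigma^2 H\Z_2$, again by the standard $\Z_2$-cohomology computation of $MSpin$ in low degrees.

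Third, having identified the $k$-invariants, I would invoke the Maunder identification of $d^2$ in the AHSS with (the dual of) these $k$-invariants. Substituting $X=T(\gamma)$ and dualising to homology immediately yields
\[
d^2_{8,0}=(Sq^2)^*\circ\rho_2,\qquad d^2_{6,1}=(Sq^2)^*,
\]
as claimed. The main obstacle is the middle step: justifying the identification of the $k$-invariants with these specific Steenrod operations. This is not formal and ultimately rests on an explicit calculation of $H^*(MSpin;\Z_2)$ as a module over the Steenrod algebra in the stable range $\leq 3$, which is precisely the content of Teichner's computation.
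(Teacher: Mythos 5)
Your proposal matches the paper's approach exactly: the paper also reduces from $MSU$ to $MSpin$ via the $6$-equivalence $SU\to\Spin$ (stated in the sentence immediately preceding the lemma) and then simply cites Teichner's Lemma 2.3.2 for the spin-bordism differentials, which is precisely the $k$-invariant/Maunder computation you unpack. Your write-up is a more detailed version of the same argument, with the middle step (identifying the first two $k$-invariants of $MSpin$ with $Sq^2\circ\rho_2$ and $Sq^2$) being the content of the cited result rather than something you need to re-derive.
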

	
The following lemma is equivalent to the claim that $E^3_{6, 1} = 0$ in the AHSS
and hence completes the proof of Proposition \ref{prop:7bordism}.

\begin{Lemma} \label{lem:d_2-is-exact}
	For all finitely generated free abelian groups $H$ and for all
	complex line bundles $\gamma$ over $K(H, 2)$ we have
	\[ \Ker(d^2_{6, 1}) = \im(d^2_{8, 0}) .\]
\end{Lemma}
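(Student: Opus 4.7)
The plan is to transport the claim across the Thom isomorphism into a statement about an algebraic cochain complex on $H^*(K(H,2); \Z_2)$, and then recognise that complex as (a twist of) a Koszul complex.

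First I would use the Thom isomorphism $\wt H^*(T(\gamma); \Z_2) \cong H^{*-2}(K(H,2); \Z_2)$ to identify the cohomology operation $Sq^2$ on $\wt H^*(T(\gamma); \Z_2)$ with the operator
\[ \delta \colon H^*(K(H,2); \Z_2) \to H^{*+2}(K(H,2); \Z_2), \qquad \delta(f) := \bar c_1 \cdot f + Sq^2(f), \]
where $\bar c_1 = c_1(\gamma) \bmod 2$.  The extra term $\bar c_1 \cdot f$ comes from $Sq^2(U) = w_2(\gamma) \cdot U = \bar c_1 \cdot U$ for the Thom class $U$ of the complex line bundle $\gamma$, together with $Sq^1(U) = 0$ and Cartan.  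Since $H_*(K(H,2); \Z)$ is free abelian (being the integral homology of a product of $\C P^\infty$'s), the reduction $\rho_2 \colon H_*(K(H,2); \Z) \to H_*(K(H,2); \Z_2)$ is surjective, so $\im(d^2_{8,0})$ coincides with the image of the dual operator $\delta^*$ acting on $H_8(K(H,2); \Z_2)$.  By duality of finite-dimensional $\Z_2$-vector spaces, the desired equality $\Ker(d^2_{6,1}) = \im(d^2_{8,0})$ is therefore equivalent to the vanishing of the cohomology of the cochain complex $(H^*(K(H,2); \Z_2), \delta)$ at degree six; a Cartan computation, using $Sq^1 = 0$ on mod-$2$ reductions of integer classes together with the Adem relation $Sq^2 Sq^2 = Sq^3 Sq^1$, confirms that $\delta^2 = 0$.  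This bookkeeping -- transporting $Sq^2$ correctly across the Thom isomorphism and using freeness of $H_*$ to bypass $\rho_2$ -- is the main obstacle; after it, everything reduces to a standard Koszul / K\"unneth argument.

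Next I would write $H^*(K(H,2); \Z_2) = \Z_2[x_1, \ldots, x_n]$ with $|x_i| = 2$ and $n = \mathrm{rank}(H)$, using that $Sq^2$ is the unique $\Z_2$-linear derivation with $Sq^2(x_i) = x_i^2$.  This description is preserved by linear changes of basis in $H \otimes \Z_2$ (since $(\sum b_i x_i)^2 = \sum b_i x_i^2$ in characteristic two), so after such a change of basis I may assume either $\bar c_1 = 0$ or $\bar c_1 = x_1$.  If $\bar c_1 = 0$ then $\delta = Sq^2$, and I would view $\Z_2[x_1, \ldots, x_n]$ as a free module of rank $2^n$ over the subring $R := \Z_2[x_1^2, \ldots, x_n^2]$ with basis the square-free monomials $\{x^\epsilon : \epsilon \in \{0, 1\}^n\}$; in this description $Sq^2$ becomes $R$-linear and coincides with the Koszul differential for the regular sequence $(x_1^2, \ldots, x_n^2)$ in $R$.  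The Koszul cohomology is $R/(x_1^2, \ldots, x_n^2) = \Z_2$ concentrated in degree zero, so $(H^*(K(H,2); \Z_2), \delta)$ is acyclic at $H^{2k}$ for every $k \geq 1$, in particular at $H^6$.  If instead $\bar c_1 = x_1$, the tensor decomposition $\Z_2[x_1, \ldots, x_n] \cong \Z_2[x_1] \otimes \Z_2[x_2, \ldots, x_n]$ splits $\delta$ as $\delta_1 \otimes 1 + 1 \otimes Sq^2$, with $\delta_1 := x_1 + Sq^2$ on $\Z_2[x_1]$.  A one-line computation gives $\delta_1(x_1^k) = (k+1) x_1^{k+1}$, so $(\Z_2[x_1], \delta_1)$ is acyclic in every degree, and K\"unneth forces $(\Z_2[x_1, \ldots, x_n], \delta)$ to be acyclic everywhere.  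Either way, the cohomology at degree six vanishes.
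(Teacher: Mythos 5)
Your proof is correct, but it follows a genuinely different route from the paper's, and is in some ways cleaner. The paper keeps the argument on the Thom space side: it defines a $\Z_2$-chain complex from $H_{2i+2}(T(\gamma); \Z_2)$ with differential $(Sq^2)^*$, handles $H \cong \Z$ by direct inspection ("a simple exercise"), and then inducts on rank by splitting $H = H_0 \oplus \Z$ with $c_1(\gamma)|_{H_0} = 0$, using the homotopy equivalence $T(\gamma) \wedge S^2 \simeq M(\underline{\C}_{K(H_0,2)}) \wedge T(\gamma')$ together with the Cartan formula and the K\"unneth theorem for chain complexes. You instead transport $Sq^2$ across the Thom isomorphism to the explicit operator $\delta = \bar c_1 \cdot ( - ) + Sq^2$ on $\Z_2[x_1, \dots, x_n]$, dualise (using torsion-freeness of $H_*(K(H,2);\Z)$ to dispense with the $\rho_2$ factor, the same observation the paper makes), normalise $\bar c_1$ by a linear change of variables to $0$ or $x_1$, and recognise the resulting differential: in the $\bar c_1 = 0$ case as the Koszul chain complex for the regular sequence $(x_1^2, \dots, x_n^2)$ in $\Z_2[x_1^2, \dots, x_n^2]$ (cohomology $\Z_2$ concentrated in degree zero), and in the $\bar c_1 = x_1$ case as a tensor product with the contractible complex $(\Z_2[x_1], \delta_1)$. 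Your normalisation step plays exactly the role of the paper's choice of the splitting $H = H_0 \oplus \Z$, but stays entirely algebraic, and the Koszul identification replaces the paper's unstated base-case computation with a conceptual explanation of why the homology is concentrated in degree zero. Both arguments ultimately use K\"unneth over $\Z_2$; yours buys a transparent closed-form description of the complex, while the paper's stays closer to the bordism-theoretic setting (which matches the surrounding discussion of Thom spectra).

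One minor point worth making explicit when you write this up: $Sq^2$ is a derivation on $H^*(K(H,2);\Z_2)$ precisely because $Sq^1 = 0$ there (all classes are reductions of integral classes, concentrated in even degree), so the $Sq^1 \otimes Sq^1$ term in the Cartan formula drops out; this is the same reason $\delta^2 = 0$ (your Adem-relation check), and it is also what makes the tensor decomposition $\delta = \delta_1 \otimes 1 + 1 \otimes Sq^2$ work in the $\bar c_1 = x_1$ case. Spelling out that one fact consolidates several of your verifications.
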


\begin{proof}
	The lemma is trivial if $H = 0$, so we assume that $H$ in
        non-zero.  We give the proof by viewing the situation from the
        point of view of homological algebra over the field
        $\Z_2$. Recall that $T(\gamma)$ denotes the Thom space of
        $\gamma$.  We define a chain complex $(C_*(H, \gamma), d)$ by
        setting
	\[ C_i(H, \gamma) := H_{2i+2}(T(\gamma); \Z_2),  \textrm{ for } i \geq 0\]
	and defining the differential $d$ by 
	\[ d_{i+1} := (Sq^2)^* \colon H_{2i+4}(T(\gamma); \Z_2)) \to H_{2i+2}(T(\gamma); \Z_2).\]
To see that the differential satisfies $d^2 = 0$, we first recall the
Adem relation $Sq^2Sq^2 = Sq^3Sq^1$, which entails that $Sq^2Sq^2 = 0$
on $H^*(T(\gamma); \Z_2)$ since the non-zero mod~$2$ cohomology
groups of $T(\gamma)$ are concentrated in even degrees.  It follows
that $(Sq^2)^*(Sq^2)^*$, which is the dual of $Sq^2Sq^2$, vanishes.
	
	Since the homomorphism $\rho_2 \colon H_{10}(T(\gamma)) \to
        H_{10}(T(\gamma); \Z_2)$ is onto, to prove the lemma it
        suffices to show that the third homology group of the chain
        complex $(C_*(H, \gamma), d)$ vanishes:
	\[ H_3(C_*(H, \gamma), d) = 0 .\]
        In the case where $H \cong \Z$, it is a simple exercise using
        the Thom isomorphism to check that the homology of $(C_*(\Z, \gamma), d)$ is
        trivial if $w_2(\gamma) \neq 0$, and if $w_2(\gamma)
        = 0$ then
	\[ H_*(C_*(\Z, \gamma), d) \cong \left\{ \begin{array}{cl} \Z_2 & * = 0 \\
	0 & * > 0.	
	\end{array} \right.  \]
	
	We shall prove the general case by induction from these two
        cases. Let $\gamma$ be a complex line bundle over $K(H, 2)$. When $H$ 
		has rank greater than one, let $H =H_0\oplus
        \Z$ with the property that $c_1(\gamma )|_{H_0}=0$. (If
        $c_1(\gamma )=0$, then any decomposition of $H$ will do, if
        $c_1(\gamma )\neq 0$ then take $H_0:= \Ker(c_1(\gamma) \colon
        H \to \Z)$.)  Observe that there is a split short exact
        sequence $H_0 \to H \xra{\pi} \Z$, such that $\gamma \cong
        \pi^* \gamma'$ for the map $\pi \colon K(H, 2) \to K(\Z, 2)$
        and for some complex line bundle $\gamma '$ over $K(\Z, 2)$.
If $\underline{\C}_X$ denotes the trivial line bundle over a
        space $X$, then there is an isomorphism of complex
        vector bundles
	\[ \underline{\C}_{K(H, 2)} \oplus \gamma \cong \underline{\C}_{K(H_0, 2)} \oplus \gamma'. \]
        where the first $\oplus$ denotes the usual Whitney sum over
        $K(H, 2)$ and the second $\oplus$ the exterior Whitney sum
        over $K(H_0, 2) \times K(\Z, 2) = K(H, 2)$.  Since the Thom
        space of the exterior Whitney sum of bundles is homotopy
        equivalent to the smash product of the Thom spaces of each
        bundle,
	\[ T(\gamma) \wedge S^2 \simeq M(\underline{\C}_{K(H_0, 2)}) \wedge T(\gamma') .\]
	If $x \in H^*(M(\underline{\C}_{K(H_0, 2)}); \Z_2)$ and $y \in H^*(T(\gamma'); \Z_2)$ and 
	$x \wedge y$ denotes their exterior cup product in 
	$H^*(M(\underline{\C}_{K(H_0, 2)}) \wedge T(\gamma'); \Z_2)$,
	then the Cartan formula for $Sq^2$ gives
	\[ Sq^2(x \wedge y) = Sq^2 x \wedge y + Sq^1 x \wedge Sq^1 y + x \wedge Sq^2 y = 
	Sq^2 x \wedge y + x \wedge Sq^2 y\]
	since $Sq^1x$ and $Sq^1y$ have odd degree and are thus zero. 
		Thus there is an isomorphism of chain complexes
	\[ (C_*(H_0 \oplus \Z, \gamma), d) \cong (C_*(H_0, 0), d) 
	\otimes (C_*(\Z, \gamma'), d), \]
where $\otimes$ denotes the tensor product of chain
        complexes.  Applying the Kunneth theorem for the homology
        groups of a tensor product of chain complexes over $\Z_2$
        inductively gives us that $H_3(C_*(H, \gamma), d) = 0$ for all
        groups $H$ and all complex line bundles $\gamma$.  This
        completes the proof of the lemma.
\end{proof}

\begin{proof}[Proof of Theorem~\ref{thm:seven_mfold_contact}]
Since a closed, oriented, simply connected manifold $M$ with torsion
free $\pi _2 (M)$ admits a spin$^c$ structure,
Lemma~\ref{lem:7d_acs_existence} implies the existence of an almost
contact structure $\acs$ on $M$.  By Equation~\eqref{eq:isom},
Proposition~\ref{prop:7bordism} implies $\Omega_7(B^2_{\scxs};
\eta^2_{\scxs}) = 0$ where $\scxs = \nacs$.  It follows that for 
any $\scxs$-compatible normal $2$-smoothing $\bscxs \colon M
\to B^2_{\scxs}$, we have $[M, \bscxs] = 0 \in\Omega_7(B^2_{\scxs};
\eta^2_{\scxs})$.  By Theorem \ref{thm:Stein} the almost contact
manifold $(M, \acs)$ is then Stein fillable.
\end{proof}

%
% subsection simply_connected_7_manifolds (end)
%%%%%%%%%%%%%%%%%%%%%%%%%%%%%%%%%%%%%%%%%%%%%%%%%%%%%% 

\section{Stein fillings of homotopy spheres} % (fold)
\label{sec:homotopy_spheres}
%%%%%%%%%%%%%%%%%%%%%%%%%%%%%%%%%%%%%%%%%%%%%%%%%%%%%%%%%%%%%%%%%%%%%%%%
Recall that an $n$-dimensional homotopy sphere is a closed, smooth, oriented
manifold $\Sigma$ which is homotopy equivalent to $S^n$.
The set of oriented diffeomorphism classes of homotopy $n$-spheres
forms an abelian group $\Theta_n$ under the operation of connected sum:
\[ \Theta_n : = \{ \, [\Sigma] \,| \, \Sigma \simeq S^n \}.\]
For $n \geq 5$, every homotopy $n$-sphere $\Sigma$ is homeomorphic to $S^n$
\cite{Smale61}, 
hence $\Theta_n$ may be regarded as the group of oriented
diffeomorphism classes of smooth structures on the $n$-sphere.
 
We now recall some fundamental facts about the group $\Theta_n$ proved
by Kervaire and Milnor.  For further information, we refer the reader
to \cite{Kervaire-Milnor63, Levine85} and \cite[6.6]{Lueck}.
Let $O$ denote the stable orthogonal group, $\pi_n^S$ the $n^{th}$ stable
homotopy group of spheres and recall the $J$-homomorphism
\[ J_n \colon \pi_n(O) \to \pi_n^S. \]
Since $\pi_n^S$ is a finite group, the cokernel of $J_n$, $\Coker(J_n)$, is also
finite.  
We state the following theorem of Kervaire and Milnor only for the case of 
interest to us where $n = 2q{+}1 \geq 5$.

\begin{Theorem}[{\cite[Section 4]{Kervaire-Milnor63}}, {\cite[Theorem 6.6]{Kervaire-Milnor63}}] \label{thm:KM}
%%%%%%%%
For $2q{+}1 \geq 5$ the abelian group $\Theta_{2q{+}1}$ lies in a short exact
sequence 
\[ 0 \dlra{} bP_{2q{+}2} \dlra{} \Theta_{2q{+}1} \dlra{\eta} \Coker(J_{2q{+}1}) \dlra{} 0 \]
where $bP_{2q{+}2}$ denotes the finite cyclic group of homotopy $(2q{+}1)$-spheres which bound
parallelisable manifolds. \qed
\end{Theorem}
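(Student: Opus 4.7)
The plan is to define the right-hand map $\eta$ via the Pontrjagin--Thom construction, identify its kernel as $bP_{2q{+}2}$, establish surjectivity by framed surgery, and then handle the finite cyclic claim.

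To build $\eta$, I would first observe that every homotopy sphere $\Sigma^{2q{+}1}$ is stably parallelizable: its stable tangent bundle is classified by a null-homotopic map $\Sigma \to BO$, the obstructions to a null-homotopy lying in cohomology groups that vanish because $\Sigma$ has the homology of a sphere. Pick a stable framing $\varphi$; the Pontrjagin--Thom construction produces a class $[\Sigma,\varphi] \in \pi_{2q{+}1}^S$. Any two stable framings of $\Sigma$ differ by a homotopy class in $\pi_{2q{+}1}(O)$, and changing $\varphi$ changes $[\Sigma,\varphi]$ precisely by $J_{2q{+}1}$ of that class. Setting $\eta(\Sigma) := [\Sigma,\varphi] \bmod \im(J_{2q{+}1})$ gives a well-defined homomorphism, since connected sum of homotopy spheres is addition under Pontrjagin--Thom. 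The kernel is easily identified: if $\Sigma = \partial W$ with $W$ parallelizable, a stable trivialisation of $TW$ restricts to a framing $\varphi$ of $\Sigma$ that bounds in framed cobordism, so $\eta(\Sigma)=0$; conversely, if $\eta(\Sigma)=0$, after modifying $\varphi$ by an element of $\pi_{2q{+}1}(O)$ we have $[\Sigma,\varphi]=0$, and Pontrjagin--Thom produces a parallelizable filling.

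The main obstacle is surjectivity. Given $x \in \pi_{2q{+}1}^S$, Pontrjagin--Thom represents $x$ by a stably framed manifold $(N^{2q{+}1}, \varphi)$, and the task is to perform framed surgery on $N$ until it becomes a homotopy sphere, keeping track of how $\varphi$ may change. Framed surgery below the middle dimension is unobstructed and yields a $(q{-}1)$-connected representative. In the middle dimension one kills $H_q(N;\Z)$ by surgering generators realised as embedded $q$-spheres with trivial normal bundle. The delicate point is that in odd total dimension the simply-connected surgery obstruction vanishes (the relevant Wall group $L_{2q{+}1}(\Z)$ is trivial), so the middle-dimensional surgery can always be carried out, and any discrepancy between the desired and induced framings on the resulting homotopy sphere lies in $\im(J_{2q{+}1})$ and hence is invisible in $\Coker(J_{2q{+}1})$. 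This produces a homotopy sphere $\Sigma$ with $\eta(\Sigma)=x$.

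Finally, the finite cyclic claim for $bP_{2q{+}2}$ requires a separate argument. Any two parallelizable fillings of a fixed homotopy sphere differ by a closed parallelizable $(2q{+}2)$-manifold, so $bP_{2q{+}2}$ is detected inside a quotient of framed cobordism by a finite-valued invariant of any parallelizable filling --- the signature divided by an appropriate integer (for $q$ odd) or the Kervaire invariant (for $q$ even). Explicit generators arise from plumbings of disk bundles over $S^{q{+}1}$, and the cyclicity follows from the fact that this invariant takes values in a finite cyclic group, which simultaneously pins down the order of $bP_{2q{+}2}$.
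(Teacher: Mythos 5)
This theorem is cited from Kervaire--Milnor without proof in the paper, so the comparison target is their original argument --- which is exactly what you are reconstructing. Your overall architecture is correct: Pontrjagin--Thom defines $\eta$, $bP_{2q{+}2}$ is identified as its kernel, surjectivity comes from framed surgery in odd dimensions, and $bP_{2q{+}2}$ is shown finite cyclic via numerical invariants of parallelizable fillings.

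There is a genuine gap in the opening step. You assert that $\Sigma^{2q{+}1}$ is stably parallelizable because the obstructions to null-homotoping $\Sigma\to BO$ lie ``in cohomology groups that vanish because $\Sigma$ has the homology of a sphere.'' That only disposes of the obstructions in degrees strictly between $0$ and $2q{+}1$. The top obstruction lives in $H^{2q{+}1}(\Sigma;\pi_{2q}(SO))\cong\pi_{2q}(SO)$, which is $\Z_2$ whenever $2q\equiv 0\pmod 8$ --- the first such case being $\Sigma^9$. Showing the obstruction \emph{class} vanishes there is not formal: Kervaire--Milnor (their Theorem 3.1) derive it from Adams' theorem on the injectivity of the $J$-homomorphism in the relevant degrees, and this is a genuine input, not bookkeeping. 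Your argument silently assumes the conclusion of the theorem being invoked. A smaller concern: ``$L_{2q{+}1}(\Z)=0$'' is the right slogan for surjectivity, but the substantive work in Kervaire--Milnor \S 5 is arranging that middle-dimensional framed surgery can be chosen to \emph{strictly decrease} the rank of $H_q$ rather than merely alter it (one must pick the right framing of the surgered sphere's normal bundle); naming the Wall group, which did not exist in 1963, does not by itself discharge this.
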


When we move to the stable complex setting, we have the following

\begin{Example} \label{ex:k-type_of_sphere}
Every homotopy $(2q{+}1)$-sphere $\Sigma$ is stably
parallelisable by \cite[Theorem 3.1]{Kervaire-Milnor63} and
hence admits an almost contact structure $\acs$ with
stabilisation $\scxs := \nacs$. 
The complex normal $(q-1)$-type of $(\Sigma, \scxs)$ is
independent of the choice of $\acs$ and is given by
\[ 
(B^{q-1}_{\scxs}, \eta^{q-1}_{\scxs}) = (BU\an{q{+}1}, \pi_{q{+}1}), 
\]
where $\pi_{q{+}1} \colon BU\an{q{+}1} \to BU$ is the $q^{th}$
connective cover of $BU$: see Example \ref{ex:Connective_complex_bordism}.
\end{Example}

For homotopy spheres bounding parallelisable manifolds we have the following
well-known proposition.

\begin{Proposition} \label{prop:bP=Stein} 
%%%%%%%%%%%%%%%%%%%
Every homotopy sphere $\Sigma \in bP_{2q{+}2}$ is Stein fillable.
\end{Proposition}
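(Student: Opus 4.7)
The plan is to construct, for every $\Sigma \in bP_{2q{+}2}$, an explicit almost complex filling whose handle decomposition has no handles of index greater than $q{+}1$, and then appeal to Eliashberg's $h$-principle (Corollary~\ref{cor:h-principle}). By definition, $\Sigma$ bounds some parallelisable $(2q{+}2)$-manifold $W_0$. The first step is to modify $W_0$ by a sequence of framed surgeries of index $\leq q$ inside its interior in order to arrange that the filling is $q$-connected (and still parallelisable). This is exactly the framed surgery-below-the-middle-dimension argument of Kervaire-Milnor \cite[\S 3--4]{Kervaire-Milnor63}: starting from a framing of $W_0$, generators of $\pi_1(W_0),\dots,\pi_q(W_0)$ are represented by framed embedded spheres whose normal bundles are trivial, and framed surgery on them preserves parallelisability while killing these homotopy groups.

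Once we have a parallelisable, $q$-connected filling $W$ with $\partial W = \Sigma$, I would show that $W$ has a handle decomposition whose handles have index only $0$ or $q{+}1$. Indeed, since $\Sigma$ is a homology $(2q{+}1)$-sphere, Poincar\'e-Lefschetz duality gives $H_{j}(W,\Sigma)\cong H^{2q{+}2-j}(W)$, which together with the long exact sequence of the pair $(W,\Sigma)$ and the $q$-connectedness of $W$ forces $H_i(W)=0$ for $i\neq 0, q{+}1$, with $H_{q{+}1}(W)$ free abelian. By Smale's handle cancellation theorem (valid since $W$ is simply connected and $2q{+}2\geq 6$), $W$ admits a handle decomposition consisting of a single $0$-handle together with finitely many $(q{+}1)$-handles.

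Parallelisability of $W$ gives a trivialisation $TW \cong W\times \R^{2q{+}2}$, and pushing any linear complex structure on $\R^{2q{+}2}$ forward through this trivialisation produces an almost complex structure $J$ on $W$. The resulting $(W,J)$ is a compact almost complex $(2q{+}2)$-manifold with handles only in dimensions $\leq q{+}1$, so Corollary~\ref{cor:h-principle} applies and produces an almost complex structure $\tilde J$ homotopic to $J$ such that $(W,\tilde J)$ is a Stein filling of $\Sigma$.

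The main technical input is the framed surgery step that upgrades $W_0$ to a $q$-connected parallelisable filling: this is where one must be careful that the normal bundles of the surgery spheres are genuinely trivial (and not merely stably trivial), so that surgery can be performed in the framed category. Fortunately this is classical and is handled in \cite[Theorem 5.5]{Kervaire-Milnor63}; once this is in place the remaining ingredients (duality, Smale's theorem, Eliashberg's $h$-principle) slot together directly.
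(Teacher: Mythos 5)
Your proof is correct, but it takes a genuinely different route from the one in the paper. The paper's argument is very short: it cites Brieskorn's theorem \cite[Korollar 2]{Brieskorn66} that every $\Sigma \in bP_{2q{+}2}$ is diffeomorphic to a Brieskorn sphere $\Sigma(a_1,\dots,a_{q{+}2})$, and then exhibits an explicit Stein filling as the Milnor fibre, namely the piece of a nearby smooth hypersurface $\{z_1^{a_1}+\dots+z_{q{+}2}^{a_{q{+}2}}=\epsilon\}$ lying inside the unit ball, with $\|z\|^2$ as the strictly plurisubharmonic function. This bypasses Eliashberg's $h$-principle entirely and gives an honest complex manifold as the filling. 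Your argument instead stays within the abstract surgery-theoretic framework: start from any parallelisable nullbordism, do framed surgery below the middle dimension (Kervaire--Milnor) to make it $q$-connected and still parallelisable, use Lefschetz duality and handle trading to eliminate handles above index $q{+}1$, transport a linear complex structure through the parallelisation, and then apply Corollary~\ref{cor:h-principle}. The two proofs trade explicitness for generality: the paper's is shorter and produces a concrete algebraic filling, while yours is self-contained within the machinery the paper has already set up. Two small remarks on your write-up: (i) what you call ``Smale's handle cancellation theorem'' is really the handle-trading consequence of the Whitney trick for simply connected manifolds with simply connected boundary in dimension $\geq 6$; and (ii) you do not actually need the sharp conclusion (only $0$- and $(q{+}1)$-handles) --- the paper's own Theorem~\ref{thm:ofwall} applied to the $q$-connected pair $(W,\Sigma)$, turned upside down, already yields a decomposition with no handles above index $q{+}1$, which is all that Corollary~\ref{cor:h-principle} requires.
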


\begin{proof}
In order to exhibit an explicit Stein filling for $\Sigma$, we use the fact that every $\Sigma \in bP_{2q{+}2}$ 
is diffeomorphic to a `Brieskorn sphere', \cite[Korollar 2]{Brieskorn66}.
That is, $\Sigma \cong \Sigma(a_1,a_2,\ldots,a_{q{+}2})$ is realised as the intersection of the singular hypersurface 
\[ H_0 = \{(z_1, \dots, z_{2q{+}2}) \, | \, z^{a_1}_1 +  z^{a_2}_2 + \ldots + z^{a_{q{+}2}}_{q{+}2}= 0\} \subset \mathbb{C}^{q{+}2} \]
with the unit sphere $S^{2q + 3} \subset \mathbb{C}^{q{+}2}$ for suitable $a_i \in \N$. A Stein filling is then given 
by considering the part of a regular hypersurface, 
\[ H_{\epsilon} = \{(z_1, \dots, z_{2q{+}2}) \, | \, z^{a_1}_1 +  z^{a_2}_2 + \ldots + z^{a_{q{+}2}}_{q{+}2}= \epsilon\}, \]
that intersects the unit ball $B^{2q + 4} \subset \mathbb{C}^{q{+}2}$
for any small $\epsilon \neq 0$ and the strictly plurisubharmonic
function is given by $||z||^2$.
\end{proof}

When we move to homotopy $(2q{+}1)$-spheres mapping non-trivially to
${\rm Coker}(J_{2q{+}1})$, {\em there is no known example admitting a
  Stein filling.}  The following proposition, which is a more precise version of
Theorem~\ref{thm:Not_Stein_sphere} from the introduction, is a consequence of
Theorem \ref{thm:h-principle} as well as results of Wall and Schultz about
homotopy spheres bounding highly-connected manifolds.

\begin{Theorem} \label{thm:homotopy_sphere}
%%%%%%%%%%%%%%%%%%%%%%
Let $\Sigma^{2q{+}1}$ be a homotopy sphere which maps non-trivially into ${\rm Coker}(J_{2q{+}1})$.  

\begin{enumerate}
	\item If $q \not\equiv  1, 3, 7$~mod~$8$ or if $q \equiv 1$~mod~$8$ and $q > 9$ or if $q = 7$ or $15$, then $\Sigma$ is not Stein fillable.
	\item If $q = 9$ or if $q \equiv 3, 7$~mod~8, then there is a
          cyclic subgroup $C^U_{q}\subset {\rm Coker}(J_{2q{+}1})$ such
          that $\Sigma$ is Stein fillable if and only if $\Sigma$ maps
          to zero in ${\rm Coker}(J_{2q{+}1})/C^U_{q}$.
\begin{enumerate}
	\item For $q = 9$, we have $C_9^U \cong 0$ or $\Z_2$.
	\item For $q \equiv 7$~mod~$8$, we have $C^U_{8k-1} \subset 4 \cdot {\rm Coker}(J_{16k-1})$.
\end{enumerate}
\end{enumerate}
\end{Theorem}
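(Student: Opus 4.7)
The plan is to combine the Filling Theorem with a stable homotopy computation.  By Example~\ref{ex:k-type_of_sphere}, every almost contact structure on $\Sigma$ has complex normal $(q-1)$-type $(BU\langle q+1\rangle,\pi_{q+1})$.  Combining Theorem~\ref{thm:Stein}, Corollary~\ref{cor:Postnikov}, and the Pontrjagin--Thom isomorphism
\[
\Omega_{2q+1}(BU\langle q+1\rangle;\pi_{q+1}) \cong \pi_{2q+1}(MU\langle q+1\rangle),
\]
Stein fillability of $\Sigma$ becomes equivalent to the vanishing of the obstruction class $[\Sigma,\bar\zeta]\in\pi_{2q+1}(MU\langle q+1\rangle)$ for some (equivalently every) complex normal $(q-1)$-smoothing $\bar\zeta$.

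Next I would detect this class via the stable framing of $\Sigma$.  The unit $\mathbb{S}\to MU\langle q+1\rangle$ of the Thom ring spectrum induces a homomorphism
\[
u_*\colon\pi_{2q+1}^S\longrightarrow\pi_{2q+1}(MU\langle q+1\rangle),
\]
which by construction carries the stably framed bordism class of $\Sigma$ to $[\Sigma,\bar\zeta]$: a stable framing is precisely a lift of the stable normal Gauss map through the zero section of $MU\langle q+1\rangle$.  Any element of $\im J_{2q+1}$ is represented by the standard sphere $S^{2q+1}$ with a twisted framing, which bounds the parallelisable disc $D^{2q+2}$ endowed with a canonical $(BU\langle q+1\rangle,\pi_{q+1})$-structure; hence $\im J_{2q+1}\subset\ker u_*$, so $u_*$ descends to
\[
\bar u_*\colon \Coker(J_{2q+1})\longrightarrow\pi_{2q+1}(MU\langle q+1\rangle).
\]
By Theorem~\ref{thm:KM} the theorem therefore reduces to showing that $\ker(\bar u_*)=0$ in the dimensions of part~(1) and that $\ker(\bar u_*)=C^U_q$ in the dimensions of part~(2).

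The central step, and main technical obstacle, is the computation of $\pi_{2q+1}(MU\langle q+1\rangle)$ and of $\bar u_*$ in these ranges.  I would attack this via the Atiyah--Hirzebruch spectral sequence
\[
E^2_{s,t}=H_s(BU\langle q+1\rangle;\Omega_t^U)\Longrightarrow \pi_{s+t}(MU\langle q+1\rangle),
\]
using the high connectivity of $BU\langle q+1\rangle$, Bott periodicity (which controls which diagonals on the $(2q+1)$-line are non-trivial), Wall's classification of $q$-connected $(2q+2)$-manifolds, and Schultz's results on which homotopy spheres bound such manifolds with prescribed stable complex structure.  The expected output is that in the exclusion ranges of part~(1) every potential contribution in total degree $2q+1$ is either killed by a differential or realised by a framed bordism class in $\im J$, so that $\ker(\bar u_*)=0$ and a $\Sigma$ with $\eta(\Sigma)\neq 0$ cannot be Stein fillable.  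In the remaining ranges of part~(2) a single cyclic summand $C^U_q$ survives: for $q\equiv 7\pmod 8$ the generator of the surviving line is hit by $4$ times a cokernel-of-$J$ generator, coming from the standard $4$-fold integrality of the relevant universal Chern number on $BU\langle 8k\rangle$-manifolds, giving $C^U_{8k-1}\subset 4\cdot\Coker(J_{16k-1})$; and for $q=9$ the only surviving class is at most a single mod~$2$ $k$-invariant, whence $C^U_9\cong 0$ or $\Z_2$.
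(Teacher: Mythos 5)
Your proposal takes a genuinely different route from the paper's, and while the reformulation is natural, it has a gap and leaves the crux of the argument uncarried out.

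The paper never passes through $\pi_{2q{+}1}(MU\an{q{+}1})$. Instead it observes directly that a Stein filling $W$ of $\Sigma$ is $q$-connected and then invokes Wall's classification of $q$-connected $(2q{+}2)$-manifolds with homotopy sphere boundary via the quadratic triple $(H,\lambda,\alpha)$, together with Wall's computation of the rel-boundary bordism group $A^{\an{q{+}1}}_{2q{+}2}$ in \cite{Wall67}. The group $C^U_q$ is defined there as $\im(\eta\circ\del\circ F)$, and its determination reduces to the image of $F_*\colon\pi_q(U)\to\pi_q(SO)$ inside Wall's isomorphism $\Phi$, plus Schultz's vanishing theorem and a low-degree check for $q=15$. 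This sidesteps the Atiyah--Hirzebruch computation for $MU\an{q{+}1}$ entirely --- yet your plan proposes to feed Wall's and Schultz's classification results back in as inputs to that spectral sequence, which is a detour: the geometric classification already gives the answer without re-encoding it homotopically. Notice also that the paper's handling of even $q$ is a one-line argument ($\pi_{2k}(U)=0$ forces parallelisability), which your AHSS approach would need to rediscover.

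There is a concrete gap in your factoring claim $\im J_{2q{+}1}\subset\ker u_*$. The canonical $(BU\an{q{+}1},\pi_{q{+}1})$-structure on $D^{2q{+}2}$ restricts to the \emph{standard} such structure on $S^{2q{+}1}$, not to the one induced by a twisted framing. A twisted framing $f_\alpha$ changes the induced $BU\an{q{+}1}$-structure on $S^{2q{+}1}$ by the image of $\alpha\in\pi_{2q{+}1}(SO)$ under the connecting map into $\pi_{2q{+}1}$ of the fibre of $\pi_{q{+}1}$, which is typically nonzero, so the resulting $(BU\an{q{+}1},\pi_{q{+}1})$-structure does not automatically extend over the disc. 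This does not mean your claim is false, but it needs a different argument --- for instance via Proposition~\ref{prop:bP=Stein}, since elements of $bP_{2q{+}2}$ bound parallelisable manifolds that (after surgery) give $q$-connected almost complex null-bordisms, which is exactly the kind of well-definedness statement you need. Finally, you acknowledge that identifying $\ker(\bar u_*)$ and deriving the case distinctions of the theorem from the spectral sequence is ``the central step'' and ``main technical obstacle,'' but that step is left entirely to intent; in the paper this is precisely where the content lives, and it is done by hand with Wall's isomorphism rather than a spectral-sequence argument.
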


\noindent
There are many cases where the group $\Coker(J_{2q{+}1})/C_q^U$ is non-zero: we discuss
some examples in Corollary~\ref{lem:alpha_spheres} and Lemma \ref{lem:ker_alpha} below.
By the Generalized Poincar\'{e} Conjecture, Theorem \ref{thm:KM} and Theorem \ref{thm:homotopy_sphere}
imply the following

\begin{Corollary}\label{cor:smooth}
%%%%%%%%%%%%%%%%%%
In general, the existence of a Stein fillable contact structure depends on the smooth structure 
of $M$ and not simply the underlying homeomorphism type of $M$. \qed
\end{Corollary}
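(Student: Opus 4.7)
The plan is to exhibit, in a single dimension, two homeomorphic smooth closed manifolds of which one admits a Stein fillable contact structure and the other does not. The natural candidates are the standard $(2q+1)$-sphere together with a suitably chosen exotic homotopy sphere of the same dimension.

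First I would record that the standard sphere $S^{2q+1}$ is Stein fillable: the closed unit ball $B^{2q+2} \subset \C^{q+1}$ equipped with the plurisubharmonic function $|z|^2$ is a Stein domain whose boundary carries the standard contact structure. Next, by the Generalised Poincar\'e Conjecture, applicable since $2q+1 \geq 5$, every homotopy $(2q+1)$-sphere is homeomorphic to $S^{2q+1}$. It therefore suffices to find a single dimension $2q+1$ in which some homotopy sphere fails to be Stein fillable.

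For this I would appeal to Theorem \ref{thm:homotopy_sphere}(1) in a convenient low dimension. The simplest choice is $q = 4$: since $q \equiv 4 \pmod 8$ the hypothesis of Theorem \ref{thm:homotopy_sphere}(1) is met, and by Example \ref{ex:Connective_complex_bordism} the non-trivial element of $\Omega_9^{SU} \cong \Z_2$ is represented by an exotic $9$-sphere $\Sigma^9$. It remains to verify that $\Sigma^9$ maps non-trivially to $\Coker(J_9)$ rather than landing in $bP_{10}$: this holds because elements of $bP_{10}$ are Stein fillable by Proposition \ref{prop:bP=Stein}, hence bound stably complex manifolds, hence vanish in $\Omega_9^{SU}$; since $[\Sigma^9] \neq 0$ in $\Omega_9^{SU}$, exactness in Theorem \ref{thm:KM} forces $\Sigma^9$ to have non-trivial image in $\Coker(J_9)$. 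Theorem \ref{thm:homotopy_sphere}(1) then guarantees that $\Sigma^9$ admits no Stein fillable contact structure, so the homeomorphic pair $(S^9, \Sigma^9)$ realises the assertion of the corollary.

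The only subtle point is precisely the separation of $\Sigma^9$ from $bP_{10}$, and the argument above handles it cleanly using the $SU$-bordism computation; otherwise everything is already packaged by the cited results.
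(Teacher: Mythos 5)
Your overall strategy is the same as the paper's: combine the Generalized Poincar\'e Conjecture with Theorem~\ref{thm:KM} and Theorem~\ref{thm:homotopy_sphere} to produce a homeomorphic pair $(S^{2q+1},\Sigma^{2q+1})$ of which only the standard sphere admits a Stein filling, and your choice of $q=4$ and the reduction to $\Omega_9^{SU}$ is a sensible way to make this explicit.

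There is, however, a genuine gap in the step where you separate $\Sigma^9$ from $bP_{10}$. You argue that elements of $bP_{10}$ are Stein fillable, \emph{hence bound stably complex manifolds, hence vanish in} $\Omega_9^{SU}$. The last implication is false: bounding a stably complex manifold only shows a class vanishes in $\Omega_9^{U}$ (which is automatic, since $\Omega_9^U=0$), and the forgetful map $\Omega_9^{SU}\to\Omega_9^U$ has non-trivial kernel. To conclude that the class vanishes in $\Omega_9^{SU}$ you must exhibit an $SU$-filling. Two easy repairs are available. First, and most directly, the definition of $bP_{10}$ says precisely that such a sphere bounds a \emph{parallelisable} manifold, whose trivial stable tangent bundle gives an $SU$-structure; this requires no reference to Stein fillings at all. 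Second, if you do want to pass through Proposition~\ref{prop:bP=Stein}, you should note that a Stein filling $W$ of a homotopy $9$-sphere is built with handles of index at most $5$ and hence (turning the decomposition upside down) is $4$-connected; then $H^2(W)=0$ forces $c_1$ of the almost complex structure to vanish, so $W$ carries an $SU$-structure. Either correction makes the argument go through. Finally, observe that the paper's Example~\ref{ex:Connective_complex_bordism} together with Lemma~\ref{lem:alpha_spheres} already records that the non-zero class in $\Omega_9^{SU}$ is represented by an exotic sphere that is not Stein fillable, so you could also have bypassed Theorem~\ref{thm:homotopy_sphere}(1) and cited that directly.
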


\begin{proof}[Proof of Theorem \ref{thm:homotopy_sphere}]
%%%%%%%%%%%%%%%%%%%%%%%%%%%%%%%%
Let $(W, J)$ be a Stein filling of $\Sigma^{2q{+}1}$.  Since $W$ has
handles only in dimension $(q{+}1)$ or less, it follows that $W$ is
obtained from $\Sigma$ by attaching handles of dimension $(q{+}1)$ or
greater.  Hence $W$ is $q$-connected and so
$\Sigma$ bounds a $q$-connected smooth manifold $W$ with a stable
complex structure $\bscxs_W$.  This constrains the diffeomorphism type
of $\Sigma$ as recorded in the statement of the proposition, as we now
explain.
  
The classification of oriented $q$-connected $(2q{+}2)$-manifolds with
boundary a homotopy sphere is given in \cite{Wall62}.  Such manifolds
are homotopy equivalent to a finite wedge of $(q{+}1)$-spheres and are
classified by triples
\[  (H, \lambda, \alpha) = (H_{q{+}1}(W), \lambda_W, \alpha_W) \]
where $(H_{q{+}1}(W), \lambda_W)$ is the usual intersection form of $W$,
which is a unimodular
bilinear form over the
integers, and $\alpha_W \colon H_{q{+}1}(W) \to \pi_{q}(SO(q{+}1))$ is a
quadratic refinement of $\lambda_W$ as explained in 
\cite[Lemma 2]{Wall62}.  The stablisation of $\alpha_W$ is a homomorphism
\[ S \alpha_W \colon H_{q{+}1}(W) \to \pi_{q}(SO), \]
which describes the stable tangent bundle of $W$ along each $(q{+}1)$-sphere 
in the homotopy type of $W$.  
In particular, $W$ admits a complex structure if and only if 
\begin{equation} \label{eq:almost_complex_criterion}
\im (S \alpha_W) \subset \im \bigl( \pi_q(U) \to \pi_q(SO) \bigr). 
\end{equation}

To study the diffeomorphism type of the homotopy sphere $\Sigma = \del
W$, Wall \cite[Theorems 2 \& 3]{Wall62} defined the bordism group
\[   A^{\an{q{+}1}}_{2q{+}2} := \{ [W] \, | \, \text{$W^{}$ is $q$-connected and $\del W \cong \Sigma$}  \},  \]
the \emph{rel.\,boundary bordism group} of smooth oriented $q$-connected $(2q{+}2)$-manifolds with boundary a homotopy sphere.
(The notation is from \cite{Stolz85} and
a similar notation appears in \cite[\S 17]{Wall67}.)
In analogy, we define the bordism group
\[   A^{U\an{q{+}1}}_{2q{+}2} := \{ [W, \acxs] \, | \, \text{$W^{}$ is $q$-connected and $\del W \cong \Sigma$}  \},  \]
to be the \emph{rel.\,boundary bordism group of almost complex
$q$-connected $(2q{+}2)$-manifolds} with boundary a stably complex
homotopy sphere.  We consider the homomorphisms
\[ A^{U\an{q{+}1}}_{2q{+}2} \xra{~F~} A^{\an{q{+}1}}_{2q{+}2} \xra{~\del~} \Theta_{2q{+}1} \xra{~\eta~} \Coker(J_{2q{+}1})\]
where $F$ remembers only the orientation underlying an almost complex
structure, $\del$ is defined by taking the diffeomorphism type of the
bounding homotopy sphere, and $\eta$ is the homomorphism from Theorem
\ref{thm:KM}.  The above discussion shows that the group
\[ C_q^U := \im(\eta \circ \del \circ F) \subset \Coker(J_{2q{+}1})\]
is isomorphic to the group of Stein fillable homotopy spheres modulo $bP_{2q{+}2}$.

\dcedit{Let $P_{2q{+}2} \subset A^{U\an{q{+}1}}_{2q{+}2}$ denote the subgroup generated
by parallelisable manifolds so that $\del P_{2q{+}2} = bP_{2q{+}2}$.
When $q = 2k$ is even, $\pi_{2k}(U) = 0$ and every $2k$-connected almost complex $(4k+2)$-manifold 
$W$ is parallelisable.  Hence 
\[ C_{2k} = (\eta \circ \del \circ F)(A^{U\an{2k+1}}_{4k+2}) = \eta(P_{4k{+}2}) = 0,\]
and it remains to consider the case where $q$ is odd.
Wall \cite[Theorem 11]{Wall67} computed the group $A^{\an{q{+}1}}_{2q{+}2}$ 
by proving that it is isomorphic to a certain Witt group of quadratic
forms $(H, \lambda, \alpha)$ as above.   
The computation is based on \cite[Theorem 11]{Wall67} where certain Grothendieck groups which 
surject onto $A^{\an{q{+}1}}_{2q{+}2}$ were computed.
We do not go into the details but summarise the facts relevant for our proof.
By \cite[Theorem 4]{Wall62}, $\eta \circ \del = 0$ for $3 \leq q \leq 7$ and hence $C_q^U = 0$ in these dimensions.

We now assume that $q = 2k+1 \geq 9$: by \cite[Theorem 11]{Wall67} and \cite[Theorem 2]{Wall62}
there is an isomorphism 
\begin{equation} \label{eq:A_bordism}
 \Phi \colon A^{\an{q{+}1}}_{2q{+}2} \cong P_{2q{+}2} \oplus \bigl( \pi_q(SO) \otimes \pi_q(SO) \bigr), 
\quad [H, \lambda, \alpha] \mapsto 
\bigl( \sigma(H, \lambda), \chi^2 \bigr). 
\end{equation}
Here $\chi \in H$ is an element such that 
$\lambda(x, \chi) = S\alpha(x)$ for all $x \in H$, intrepreted mod~$2$ if $\pi_q(SO) \cong \Z_2$, and 
$\chi^2 = \lambda(\chi, \chi)$ and we do not define $\sigma(H, \lambda) \in bP_{2q{+}2}$
since $\del(P_{2q{+}2}) \subset bP_{2q{+}2}$ does not concern us.
%By definition, the summand $L_{1-q}(e) \subset A^{\an{q{+}1}}_{2q{+}2}$ corresponds to the subgroup
%generated by the stably parallelisable spheres, 
%and hence $\del(\Phi^{-1}(L_{1-q}(e))) \in bP_{2q{+}2}$.
%We therefore concentrate on the other summand.
%
Let $F_* \colon \pi_q(U) \to \pi_q(SO)$ be the homomorphism induced by
$U\to SO$.  From~\eqref{eq:A_bordism} and~\eqref{eq:almost_complex_criterion} above, we see
that 
%for $q \geq 9$
there is an isomorphism}
\[ \Phi^U \colon F \bigl( A^{U\an{q{+}1}}_{2q{+}2} \bigr) \cong \dcedit{P_{2q{+}2}} \oplus \bigl( \im(F_*) \otimes \im(F_*) \bigr) .\]
It follows that $C_q^U$ is the zero group if $F_* = 0$.  Given our
knowledge of the homomorphism $\pi_q(U) \to \pi_q(SO)$ this occurs unless $q \equiv 1, 3, 7$~mod~$8$. 
When $q \equiv 1, 3, 7$~mod~$8$, we see that $C_q^U$ is the
cyclic group generated by the element
\[  \bigl( \eta \circ \del \circ (\Phi^{U})^{-1} \bigr) \bigl( 0, F_*(1) \otimes F_*(1) \bigr) \in \Coker(J_{2q{+}1}), \]
where $1 \in \pi_q(U)$ is a generator.  For $q \equiv 1, 3$~mod~$8$, $F_*$ is
onto. However, if $q = 8k{+}1 > 9$ Schultz
\cite[Corollary 3.2]{Schultz} states that $\eta \circ \del = 0$,
proving that $C_{8k{+}1}^U = 0$ if $8k{+}1 > 9$.  Finally, for $q \equiv
7$~mod~$8$, $F_*(\pi_q(U)) \subset \pi_q(SO)$ is a subgroup of index
two and so the bilinearity of the tensor product ensures that $C_q^U =
4 \cdot \im(\eta \circ \del)$.  When $q = 15$, we have $\Coker(J_{31})
\cong \Z_2^2$, \cite[Table A3.3]{Ravenel86}, and hence $C_{15}^U = 0$.
\end{proof}

We now give some examples of exotic spheres which are not Stein fillable.
Since every homotopy sphere has a unique spin structure, there is a homomorphism
$\omega^{Spin} \colon \Theta_{n} \to \Omega_{n}^{Spin}$,
given by mapping a homotopy sphere to it spin bordism class.  Recall now the
$\alpha$-invariant
\[ \alpha \colon \Omega_*^{Spin} \to KO_* \]
which is a ring homomorphism from spin bordism to real $K$-theory defined by 
taking the $KO$-valued index of the Dirac operator on a spin manifold, \cite[\S 4.2]{Hitchin}.
Composing $\alpha$ with $\omega^{Spin}$ we obtain the $\alpha$-invariant for homotopy spheres
\[ \alpha \colon \Theta_n \to \Omega_n^{Spin} \to KO_n. \]
It is known that in all dimensions $8k{+}1$, $k \geq 1$, there are exotic
spheres with non-trivial $\alpha$-invariant in $KO_{8k{+}1} \cong
\Z_2$. The existence of such spheres follows from theorems of Milnor
and Adams as is explained in \cite[p.\,44]{Hitchin}.  If
$\alpha(\Sigma) = 1$ then $\Sigma$ does not bound a spin manifold.  On
the other hand, a Stein filling of $\Sigma$ is $4k$-connected and in
particular admits a unique spin structure.  Hence we obtain an
alternative proof of the following special case of Theorem
\ref{thm:homotopy_sphere}.

\begin{Lemma} \label{lem:alpha_spheres}
%%%%%%%%%%%%%%%%%%%used to be {prop:alpha_spheres}
If $\Sigma \in \Theta_{8k{+}1}$ has $\alpha(\Sigma) = 1 \in KO_{8k{+}1}$
then $\Sigma$ is not Stein fillable. \qed
\end{Lemma}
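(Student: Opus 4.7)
The plan is to prove the contrapositive: if $\Sigma^{8k{+}1}$ admits a Stein filling, then $\alpha(\Sigma)=0$. The key observation is that a Stein filling of a homotopy $(8k{+}1)$-sphere is automatically $4k$-connected, hence admits a spin structure, hence provides a spin null-bordism of $\Sigma$.

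First, I would invoke Eliashberg's characterization (Corollary \ref{cor:h-principle}) to say that if $(W, J)$ is a Stein filling of $\Sigma$, then $W$ is an almost complex $(8k{+}2)$-manifold admitting a handle decomposition with handles only of index $\leq 4k{+}1$. Turning this handle decomposition upside down, $W$ is built from a collar $\Sigma\times I$ by attaching handles of index $\geq 4k{+}1$. Since $\Sigma$ is a homotopy sphere, a standard computation with the cellular chain complex (or alternatively the long exact sequence of the pair $(W,\Sigma)$) shows that $W$ is $4k$-connected.

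Next, since $k\geq 1$, we have in particular $\pi_1(W)=0$ and $H^2(W;\Z/2)=0$, so $w_2(W)=0$ and $W$ admits a (necessarily unique) spin structure. This spin structure restricts to the unique spin structure on the homotopy sphere $\Sigma=\partial W$. Hence $W$ is a spin null-bordism of $\Sigma$, so $\omega^{Spin}(\Sigma) = 0 \in \Omega_{8k{+}1}^{Spin}$, and applying the ring homomorphism $\alpha$ yields $\alpha(\Sigma)=0$, contradicting the hypothesis.

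The argument is essentially a direct consequence of the filling lemma combined with the well-known behavior of the $\alpha$-invariant on spin bordism, so there is no substantial obstacle to overcome; the only thing to verify carefully is the range $k\geq 1$ to ensure that $4k\geq 2$, which is what makes $W$ automatically spin.
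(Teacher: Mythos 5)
Your proof is correct and follows essentially the same approach as the paper: the text preceding Lemma~\ref{lem:alpha_spheres} observes precisely that a Stein filling of $\Sigma$ is $4k$-connected, hence admits a spin structure, so $\Sigma$ bounds a spin manifold, contradicting $\alpha(\Sigma)=1$. Your write-up fills in the standard handle/connectivity details but introduces no new idea beyond what the paper uses.
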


Next we show that taking connected sums with $\alpha$-invariant-$1$
homotopy spheres can often destroy the Stein fillability of more
general manifolds.  Since $\pi_{8k{+}1}(SO/U) = 0$, it follows that
every homotopy $(8k{+}1)$-sphere has a unique stable complex structure
$\scxs_\Sigma$.
Given a stably complex manifold $(M, \scxs)$, we shall
write $(M \sharp \Sigma, \scxs \sharp \scxs_\Sigma)$ for the stably complex manifold obtained by taking the
connected sum of the stably complex manifolds $(M, \scxs)$ and $(\Sigma, \scxs_\Sigma)$.

\begin{Proposition} \label{prop:general_contamination}
%%%%%%%%%%%%%%%%%%%
Let $(M, \acs)$ be a Stein fillable almost contact manifold of
dimension $8k{+}1$ with $\scxs := \nacs$ and
$c_1(\scxs) = 0$.  If $\Sigma$ is a homotopy $(8k{+}1)$-sphere with
$\alpha(\Sigma) = 1 \in KO_{8k{+}1}$, then the stably complex manifold
$(M \sharp \Sigma, \scxs \sharp \scxs_\Sigma)$ is not Stein fillable.
\end{Proposition}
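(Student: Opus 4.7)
The plan is to derive a contradiction from the assumption that $(M \sharp \Sigma, \scxs \sharp \scxs_\Sigma)$ admits a Stein filling, by applying the Filling Theorem~\ref{thm:Stein} to both $M$ and $M \sharp \Sigma$, isolating the contribution of $\Sigma$ via a connected-sum bordism, and then projecting to $\Omega^{SU}_{8k{+}1}$ where the $\alpha$-invariant provides the required obstruction. Let $(B, \eta) = (B^{q-1}_\scxs, \eta^{q-1}_\scxs)$ denote the complex normal $(q{-}1)$-type of $(M, \scxs)$, with $q = 4k$. The hypothesis $c_1(\scxs) = 0$ lets one lift $\scxs$ to $\scxs' \colon M \to BSU$; taking a Postnikov factorisation of $\scxs'$ as a map over $BSU$ yields a fibration $\eta' \colon B \to BSU$ with $\eta = \pi_{SU} \circ \eta'$. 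The key point is that $\pi_{SU}$ induces an isomorphism on $\pi_j$ for $j \geq 3$, so composing the Postnikov factorisation of $\scxs'$ over $BSU$ with $\pi_{SU}$ gives a Postnikov factorisation of $\scxs$ over $BU$, which by uniqueness agrees with $(B, \eta)$.

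Fix a $(q{-}1)$-smoothing $\bscxs \colon M \to B$. Since $(M, \acs)$ is Stein fillable, Theorem~\ref{thm:Stein} gives $[M, \bscxs] = 0 \in \Omega_{2q{+}1}(B; \eta)$. As $\Sigma$ is homotopy equivalent to $S^{2q{+}1}$ and stably parallelisable, the pinch map $p \colon M \sharp \Sigma \to M$ is a $(q{+}1)$-equivalence and satisfies $p^*(\scxs) = \scxs \sharp \scxs_\Sigma$ (using that the normal bundle of $M \sharp \Sigma$ is identified with $p^*(\nu_M)$ via the framing of $\Sigma$); hence $\bscxs' := \bscxs \circ p$ is a $\scxs \sharp \scxs_\Sigma$-compatible $(q{-}1)$-smoothing of $M \sharp \Sigma$. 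Assume for contradiction that $(M \sharp \Sigma, \scxs \sharp \scxs_\Sigma)$ admits a Stein filling; then Theorem~\ref{thm:Stein} combined with Corollary~\ref{cor:Postnikov} gives $[M \sharp \Sigma, \bscxs'] = 0$.

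The standard $1$-handle cobordism $C$ from $M \sqcup \Sigma$ to $M \sharp \Sigma$ carries a $(B, \eta)$-structure restricting on the $M$-component to $\bscxs$, on the $\Sigma$-component to a map $c \colon \Sigma \to B$ that factors through a basepoint of $B$ (corresponding to the canonical SU-lift of the stable framing of $\Sigma$), and on the top to $\bscxs'$. This gives
\[ [M, \bscxs] + [\Sigma, c] = [M \sharp \Sigma, \bscxs'] = 0 \in \Omega_{2q{+}1}(B; \eta), \]
forcing $[\Sigma, c] = 0$. Pushing forward along $\eta'$ yields a homomorphism $(\eta')_* \colon \Omega_{2q{+}1}(B; \eta) \to \Omega^{SU}_{2q{+}1}$. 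Since $c$ factors through a point of $B$, $(\eta')_*[\Sigma, c]$ equals the canonical $SU$-bordism class $[\Sigma]_{SU}$ of $\Sigma$ with the SU-structure induced by its framing. Composing with the $\alpha$-invariant $\Omega^{SU}_{8k{+}1} \to \Omega^{Spin}_{8k{+}1} \xra{~\alpha~} KO_{8k{+}1}$ returns $\alpha(\Sigma) = 1 \neq 0$, which contradicts $[\Sigma, c] = 0$.

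The main obstacle is in setting up the connected-sum identity $[M, \bscxs] + [\Sigma, c] = [M \sharp \Sigma, \bscxs']$ rigorously within the $(B, \eta)$-bordism framework: one must describe the $(B, \eta)$-structure on the cobordism $C$, verify that its restriction to the $\Sigma$-end factors through a basepoint of $B$ (using the stable parallelisability of $\Sigma$) and that its restriction to the top realises the pullback $\bscxs \circ p$. Once this essentially formal bookkeeping is carried out, the remainder of the argument is a matter of chasing through the factorisation $B \xra{\eta'} BSU \xra{\pi_{SU}} BU$ and invoking the Stein fillability obstructions established in Section~\ref{sec:complex}.
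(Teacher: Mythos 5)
Your proof is correct and follows the same strategy as the paper: use $c_1(\scxs) = 0$ to factor $\eta^{q-1}_\scxs$ through $\pi_{SU} \colon BSU \to BU$, exploit the Stein fillability of $M$ (via Theorem~\ref{thm:Stein}) together with connected-sum additivity of bordism classes and the uniqueness of the $B$-structure on $\Sigma$ (from $\pi_{8k+1}(SO/U) = 0$) to reduce $[M\sharp\Sigma]$ to $[\Sigma]$, and then detect $[\Sigma]_{SU} \neq 0 \in \Omega^{SU}_{8k+1}$ via the $\alpha$-invariant. The paper phrases the same argument directly rather than by contradiction and leaves the connected-sum identity implicit instead of exhibiting the $1$-handle cobordism, but the underlying logic is identical.
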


\begin{proof}
%%%%%%%%%%%%%
Since $c_1(\scxs) = 0$, there is a lift of the normal complex structure $\scxs \colon M \to BU$
to $BSU$.  It follows that there is a map of stable complex bundles 
$F \colon (B^{4k-1}_{\scxs}, \eta^{4k-1}_{\scxs}) \to (BSU, \pi_{SU})$.  The bundle map $F$ induces a homomorphism
of bordism groups
\[ F_* \colon \Omega_{8k{+}1}(B^{4k-1}_{\scxs}; \eta^{4k-1}_{\scxs}) \to \Omega_{8k{+}1}^{SU}.\]
Since $(M, \acs)$ is Stein fillable by Theorem \ref{thm:Stein}, every
$(4k-1)$-smoothing $\bscxs \colon M \to B^{4k-1}_{\zeta}$ is
null-bordant.  Moreover $\pi_{8k{+}1}(SO/U) = 0$ and thus $\Sigma$ admits a unique
$(B^{4k-1}_\scxs, \eta^{4k-1}_\scxs)$-structure $\bscxs_\Sigma$.
The connected
sum $(M \sharp \Sigma, \bscxs \sharp \bscxs_\Sigma)$ 
is a $(\scxs \sharp \scxs_\Sigma)$-compatible normal $(4k-1)$-smoothing in 
$(B^{4k-1}_{\scxs}, \eta^{4k-1}_\scxs)$.  Now we have
\[ F_*([M \sharp \Sigma, \bscxs \sharp \bscxs_\Sigma]) = [\Sigma, \scxs_\Sigma] \neq 0 \in \Omega_{8k{+}1}^{SU},\]
where the last inequality holds since the homomorphism $SU \to Spin$
induces a homomorphism $\Omega_*^{SU} \to \Omega_*^{Spin}$. 
Since $\alpha(\Sigma) = 1$, it follows that 
$[\Sigma, \scxs_\Sigma] \neq 0 \in \Omega_{8k{+}1}^{SU}$.
The above argument therefore shows that 
$[M \sharp \Sigma, \bscxs\sharp \bscxs_\Sigma] \neq 0 \in \Omega_{8k{+}1}(B^{4k-1}_{\scxs}; \eta^{4k-1}_{\scxs})$, 
and so by Theorem \ref{thm:Stein}, $(M \sharp \Sigma, \scxs \sharp \scxs_\Sigma)$ is not
Stein-fillable.
\end{proof}

%\begin{Example} \label{ex:Exotic-9-sphere}
%%%%%%%%%%%%%%
We next construct a certain exotic $9$-sphere $\Sigma$ 
which lies inthe kernel of the $\alpha$ invariant
$\alpha \colon \Theta_9 \to KO_9$, but which does not bound a parallelisable manifold.
By Theorem \ref{thm:homotopy_sphere}, this homotopy sphere is not Stein fillable, but from 
a topological point of view, one can argue that it is one of the ``least exotic'' 
homotopy spheres which is not Stein fillable.  \dcedit{To the best of our knowledge, it is
not known whether $\Sigma$ admits a symplectically fillable contact structure.}

By Theorem \ref{thm:KM} above and results of Toda \cite[p.\,189]{Toda}, there is a short exact sequence
\[ 0 \to bP_{10} \to \Ker(\alpha) \to \Z_2 \to 0. \]
We shall given a explicit description of a homotopy sphere $\Sigma$ where 
$[\Sigma]$ generates $\Ker(\alpha)/bP_{10}$.
We first recall the well-known plumbing pairing
\[ \sigma_{p, q} \colon \pi_p(SO(q)) \times \pi_q(SO(p)) \longmapsto  \Theta_{p+q{+}1},
\quad (\beta, \gamma) \longmapsto  \del W(S(\beta), S(\gamma)),    \]
where $S \colon \pi_p(SO(q)) \to \pi_p(SO(q{+}1))$ is the the stabilisation homomorphism
and 
\[ W(S(\beta), S(\gamma)) : = (D^{q{+}1} \tilde \times_{S(\beta)} S^{p+1}) \cup_{D^{q{+}1} \times D^{p+1}} 
(D^{p+1} \tilde \times_{S(\gamma)} S^{q{+}1}) \]
is the compact smooth $(p+q{+}2)$-manifold obtained by plumbing the disc bundles of $S(\beta)$ and $S(\gamma)$
together: see for example \cite[Remark p.\,741]{Schultz}.
We let $\beta_5 \in \pi_3(SO(5)) \cong \Z$ and $\gamma_3 \in \pi_5(SO(3)) \cong \Z_2$ be 
generators and define the homotopy $9$-sphere
\[ \Sigma^9_{\beta_5, \gamma_3} : = \sigma_{3, 5}(\beta_5, \gamma_3). \]
Notice that there is a homotopy equivalence
$W(S(\beta_5), S(\gamma_3)) \simeq S^4 \vee S^6$, so that the manifold $W(S(\beta_5), S(\gamma_3))$ cannot admit a Stein structure, 
but from the point of view of the dimensions of the handles, $W(S(\beta_5), S(\gamma_3))$ is as close as possible 
to admitting a Stein structure.

\begin{Lemma} \label{lem:ker_alpha}
%%%%%%%%%%%%
The homotopy $9$-sphere $\Sigma_{\beta_5, \gamma_3}$ maps to a generator of $\Ker(\alpha)/bP_{10} \cong \Z_2$.
\end{Lemma}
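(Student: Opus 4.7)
Since $\Ker(\alpha)/bP_{10} \cong \Z_2$, it suffices to verify both of the following statements: (i) $\alpha(\Sigma_{\beta_5, \gamma_3}) = 0$, and (ii) $\Sigma_{\beta_5, \gamma_3} \notin bP_{10}$. Together these pin down the unique non-trivial class of the quotient.

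For (i), my plan is to show that the plumbing manifold $W := W(S(\beta_5), S(\gamma_3))$ is itself spin, hence provides a spin null-cobordism of $\Sigma_{\beta_5, \gamma_3}$. Each of the two disc bundles has vanishing second Stiefel-Whitney class: for the $D^6$-bundle over $S^4$, $w_2$ is the mod-$2$ reduction of an element of the torsion-free group $\pi_3(SO) \cong \Z$ and therefore vanishes; for the $D^4$-bundle over $S^6$, $w_2$ lies in $H^2(S^6;\Z_2) = 0$ for dimensional reasons. Since the bases $S^4$ and $S^6$ are spin and the plumbing operation respects spin structures (the attaching regions are just $D^k \times D^\ell$), the total space $W$ is spin. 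The spin bordism invariance of $\alpha$ then yields $\alpha(\Sigma_{\beta_5,\gamma_3}) = 0$.

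The main obstacle is (ii). The natural approach is via the Pontrjagin--Thom correspondence: every homotopy sphere is stably parallelisable by \cite[Theorem~3.1]{Kervaire-Milnor63}, so a choice of stable framing determines a well-defined class of $\Sigma_{\beta_5, \gamma_3}$ in $\pi_9^S / \im(J_9) = \Coker(J_9)$, and it suffices (in view of (i) and the fact that $bP_{10}$ maps to zero in $\Coker(J_9)$) to show that this class is non-zero. For a plumbing sphere $\sigma_{p,q}(\beta,\gamma)$ there is a classical description of the Pontrjagin--Thom class in $\Coker(J_{p+q{+}1})$ as a Toda-bracket-like combination built from the unstable $J$-homomorphism values of $\beta$ and $\gamma$; I would appeal to Schultz's identification of such invariants \cite{Schultz}, together with Toda's low-dimensional computations of $\pi_*^S$ as summarised in \cite{Ravenel86}, to identify the resulting element for $(\beta,\gamma) = (\beta_5, \gamma_3)$. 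The expectation is that even though $S(\gamma_3) = 0 \in \pi_5(SO)$, the unstable data $\gamma_3 \in \pi_5(SO(3))$ combined with $J(\beta_5) = \nu \in \pi_3^S$ produces a non-trivial element of $\Coker(J_9)$. The hardest part is this explicit identification, which rests on a careful unstable computation; once it is in place, the lemma follows immediately from the combination of (i) and (ii).
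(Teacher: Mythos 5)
Your outline correctly splits the argument into (i) $\alpha(\Sigma_{\beta_5,\gamma_3})=0$ and (ii) $\Sigma_{\beta_5,\gamma_3}\notin bP_{10}$, and your treatment of (i) is sound in substance, though the reason $w_2$ vanishes for the $D^6$-bundle over $S^4$ is simply that $H^2(S^4;\Z_2)=0$, not anything to do with $\pi_3(SO)$ being torsion-free.

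The trouble is that for (ii) you defer the entire argument: you name Schultz and Toda as the right sources and then write ``the hardest part is this explicit identification, which rests on a careful unstable computation; once it is in place, the lemma follows.'' That computation \emph{is} the lemma, and there is no off-the-shelf ``Toda-bracket-like formula'' for the Pontrjagin--Thom class of a plumbing sphere that you can simply read off. The paper's proof carries out precisely the work you postpone, and none of its steps is formal. Concretely, it (a) shows that the exotic $8$-sphere $\Sigma^8\in\Theta_8\cong\Z_2$ is the plumbing sphere $\sigma_{3,4}(\beta_4,\delta_3)$, which requires first destabilising $\delta_4=\tau_{S^4}\circ\eta_3\in\pi_4(SO(4))$ to a generator $\delta_3\in\pi_4(SO(3))$ by a diagram chase in the fibration $SO(3)\to SO(4)\to S^3$; (b) invokes Schultz's Theorem~2.5 to identify $\Sigma_{\beta_5,\gamma_3}=\sigma_{3,5}(S\beta_4,\delta_3\circ\eta_4)$ with the Milnor--Munkres--Novikov product $\tau_{1,8}(\mu_8,\Sigma^8)$, which in turn requires an explicit quaternionic verification that the Samelson product $\beta_4\ast\mu_4$ vanishes; and (c) observes that $\tau_{1,8}(\mu_8,-)$ shifts $\Coker(J)$ by post-composition with $\eta_8$, so the image of $\Sigma_{\beta_5,\gamma_3}$ in $\Coker(J_9)$ is $[\eta(\Sigma^8)\circ\eta_8]$, which is non-zero by Toda. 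Locating the right references and predicting the outcome is not a substitute for supplying these arguments; as it stands, your proposal has a genuine gap exactly where the lemma has its content.
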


\begin{proof}
%%%%%%%%%%%%%
The proof starts with the exotic $8$-sphere $\Sigma^8 \in \Theta_8
\cong \Z_2$.  By \cite[Satz 12.1]{Stolz85} and \cite[Proposition
  12.20]{Husemoller94}, there is a diffeomorphism $\Sigma^8 \cong \del
W(\beta_5, \delta_4)$ where $\beta_5 \in \pi_3(SO(5))$ is as above and
$\delta_4 \in \pi_{4}(SO(4))$ is given by the composition $\tau_{S^4}
\circ \eta_3 \colon S^4 \to S^3 \to SO(4)$, where $\tau_{S^4}$ is the
characteristic map of the tangent bundle of the $4$-sphere and
$\eta_3$ is essential.  We claim that $\delta_4 = S(\delta_3)$ where
$\delta_3 \in \pi_{4}(SO(3)) \cong \Z_2$ is a generator.  To see this,
we use the commutative diagram of exact sequences
\[ \xymatrix{ \pi_3(SO(3)) \ar[d]^{\circ \eta_3} \ar[r]^S & \pi_3(SO(4)) \ar[d]^{\circ \eta_3} \ar[r]^{E_3} & \pi_3(S^3) \ar[d]^{\circ \eta_3} \\
\pi_{4}(SO(3)) \ar[r]^S & \pi_{4}(SO(4)) \ar[r]^{E_4} & \pi_{4}(S^3),  }   \]
where the horizontal sequences are part of the homotopy long exact sequence of the fibration
$SO(3) \to SO(4) \to S^3$, the vertical maps are given by pre-composition with 
$\eta_3$, and the map $E_3$ takes the Euler class of the corresponding bundle.  
Since $E_3(\tau_{S^4}) = \pm 2 \in \pi_3(S^3) \cong \Z$, 
it follows that $E_3(\tau_{S^4}) \circ \eta_3 = 0$ and so $E_4(\tau_{S^4} \circ \eta_3) = 0$.
Hence $\tau_{S^4} \circ \eta_3 \in \im(S)$.  Since $\Sigma^8$ is non-standard, $\tau_{S^4} \circ \eta_3$ is
non-zero and this proves the claim.
It follows that $\Sigma^8 \cong \sigma_{3, 4}(\beta_4, \delta_3)$, where $\beta_4 \in \pi_3(SO(4))$ stabilises to $\beta_5$.

To relate $\Sigma^8$ to $\Sigma_{\beta_5, \gamma_3}$ we shall use the Milnor-Munkres-Novikov pairing 
\cite[p.\,583]{Lashof63a},
\[ \tau_{p, q} \colon \pi_p(SO_q) \times \Theta_q \longmapsto \Theta_{p+q}, 
\quad (\alpha, \Sigma) \longmapsto \del W(\alpha, \Sigma), \]
where $W(\alpha, \Sigma)$ is the plumbing manifold
\[  (D^{q} \tilde \times_{\alpha} S^{p+1}) \cup_{D^{p+1} \times D^{q}} 
(D^{p+1} \times \Sigma^{q})  \]
obtained by plumbing the disc bundle of $\alpha$ with the trivial $(p+1)$-disc bundle over the homotopy
sphere $\Sigma$.
By \cite[Theorem 2.5]{Schultz}, if $\mu_n \in \pi_1(SO(n)) \cong \Z_2$ is a generator for $n \geq 3$, then
\[ \tau_{1,8}(\mu_8, \sigma_{3, 4}(\beta_4, \delta_3))) = \sigma_{3, 5}(S\beta_4, \delta_3 \circ \eta_4),  \]
so long as the Samelson product $S(\beta_4) \ast S(\mu_4) \in
\pi_{4}(SO(5))$ is trivial: we assume this for now and complete the proof.  Since
$\gamma_3 = \delta_3 \circ \eta_4$, it follows that $\Sigma_{\beta_5,
  \gamma_3} \cong \tau_{1, 8}(\mu_8, \Sigma^8)$.  But it is clear from
the definition of the pairing $\tau_{p, q}$ that $\eta(\tau_{1,
  8}(\mu_8, \Sigma_8)) = [\eta(\Sigma^8) \circ \eta_8] \in
\Coker(J_9)$.  But by \cite[p.\,189]{Toda}, $[\eta(\Sigma^8) \circ
  \eta_8] \neq 0 \in \Coker(J_9)$ and so $\Sigma_{\beta_5, \gamma_3}$
does not belong to $bP_{10}$.  On the other hand, $\Sigma_{\beta_5,
  \gamma_3}$ bounds a spin manifold by construction and so
$\Sigma_{\beta_5, \beta_3} \in \Ker(\alpha)$.

To complete the proof, we must show that the Samelson product $\beta_5 \ast \mu_5$ vanishes.
It suffices to show that $\beta_4 \ast \mu_4$ vanishes.
Recall that the Samelson product $\beta_4 \ast \mu_4 \colon S^4 \to SO(4)$ is defined 
to be the homotopy class of the map induced on $S^4$ by the following map
\[ S^3 \times S^1 \to SO(4), \quad (x, \lambda) \mapsto \beta_4(x)\mu_4(y)\beta_4^{-1}(x)\mu_4^{-1}(y). \]
Now, we represent $\beta_4$ and $\mu_4$ by the following maps:
\[ \beta_4(x)(y) = x \cdot y\quad \text{and} \quad \mu_4(\lambda)(y) = y \cdot \lambda ,\]
where $y \in \mathbb{H}$ is a quaternion, $x \in S^3$ a unit quaternion and
$\lambda \in S^1 \subset S^3$ a unit complex number.  Evidently
$\beta_4(x), \mu_4(\lambda) \in SO(4)$ commute for all values of $(x,
\lambda)$ and hence the Samelson product $\beta_4 \ast \mu_4$
vanishes.
\end{proof}

By Lemma \ref{prop:bP=Stein} homotopy spheres in $bP_{2q{+}2}\subset \Theta_{2q{+}1}$
(i.e. the ones mapping trivially to $\Coker(J_{2q{+}1}) $) are all Stein 
fillable, while Theorem \ref{thm:homotopy_sphere} 
shows that many homotopy spheres with non-trivial image in $\Coker(J_{2q{+}1}) $ do not 
admit Stein fillings. This observation naturally leads us to the following

\begin{Conjecture} \label{conj:stein_iff_bP}
%%%%%%%%%%%%%%%%%%
A homotopy sphere $\Sigma ^{2q{+}1}$ is Stein fillable if and only $\Sigma^{2q{+}1} \in bP_{2q{+}2}$.
That is, in the notation of Theorem \ref{thm:homotopy_sphere}, $C_q^U = 0$ for all $q$.
\end{Conjecture}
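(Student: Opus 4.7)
The plan is to reduce the conjecture to the vanishing of a single specific homotopy sphere class in $bP_{2q+2}$ in each of the unresolved dimensions. By Theorem~\ref{thm:homotopy_sphere}, it only remains to treat $q = 9$, $q \equiv 3 \pmod 8$ with $q \geq 11$, and $q \equiv 7 \pmod 8$ with $q \geq 23$. In each of these cases the proof of Theorem~\ref{thm:homotopy_sphere} identifies $C_q^U$ as the cyclic subgroup of $\Coker(J_{2q+1})$ generated by the single element $\eta(\del W(S(\alpha), S(\alpha)))$, where $\alpha = F_*(1) \in \pi_q(SO)$ is the image of a generator of $\pi_q(U)$. Hence the conjecture reduces to showing that for each remaining $q$ this specific plumbing boundary lies in $bP_{2q+2}$.

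The first approach I would pursue is a direct geometric reduction. Because $\alpha$ lies in the image of $F_* \colon \pi_q(U) \to \pi_q(SO)$, the stable vector bundle over $S^{q+1}$ classified by $S(\alpha)$ carries a complex structure. I would try to perform an ambient surgery on an embedded $(q{+}1)$-sphere in $W := W(S(\alpha), S(\alpha))$ representing a generator of $H_{q{+}1}(W)$, exploiting the complex structure on its normal bundle to produce a stably parallelisable bordism of $W$ rel.\ boundary and thereby exhibit $\del W \in bP_{2q+2}$. A second, more homotopy-theoretic attack is suggested by the final part of Lemma~\ref{lem:ker_alpha}: one can try to show that the Samelson product $\alpha \ast \alpha \in \pi_{2q}(SO)$ vanishes because $\alpha$ is realised by a loop in $U$, and then use the resulting simplification of the plumbing (for example by fibring $W$ as a sphere bundle up to bordism) to pin down $\del W$ explicitly and verify that it bounds a parallelisable manifold.

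The main obstacle will be the infinite families $q \equiv 3 \pmod 8$ and $q \equiv 7 \pmod 8$: here $\Coker(J_{2q+1})$ becomes increasingly complicated as $q$ grows, and there is no obvious structural feature analogous to the vanishing of $\eta$-multiples that Schultz~\cite{Schultz} exploited in the case $q = 8k{+}1 > 9$. The case $q = 9$ is the natural first target: the proof of Theorem~\ref{thm:homotopy_sphere} already narrows the possibilities to $C_9^U \in \{0, \Z_2\}$, and a single computation of $\eta(\del W(S(\alpha), S(\alpha)))$ for $\alpha$ the nontrivial element of $\pi_9(SO) \cong \Z_2$ should decide this low-dimensional case. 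For the remaining higher dimensions, a unified attack would most likely proceed by analysing $\Omega^{SU}$-theoretic invariants of the plumbing classes and tracking their images under the unit map $\Omega^{SU}_* \to \Omega_*^{\textup{fr}}$, via Adams-spectral-sequence or Toda-bracket bookkeeping on $MSU$; conversely, the detection of a nontrivial Stein-fillable class outside $bP_{2q+2}$ in this framework would instead refute the conjecture.
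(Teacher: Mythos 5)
This statement is explicitly a \emph{Conjecture} in the paper: there is no proof given, so there is nothing to compare your proposal against. Your text is, appropriately, a research plan rather than a proof, and you correctly identify the residual cases left open by Theorem~\ref{thm:homotopy_sphere}: $q=9$, $q\equiv 3\pmod 8$ with $q\ge 11$, and $q\equiv 7\pmod 8$ with $q\ge 23$, in each of which $C_q^U$ is cyclic, generated by $\eta\bigl(\del W(S\alpha,S\alpha)\bigr)$ for $\alpha=F_*(1)\in\pi_q(SO)$.

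That said, both of your concrete lines of attack as written have gaps. The first --- performing an ``ambient surgery'' on an embedded $(q{+}1)$-sphere and ``exploiting the complex structure on its normal bundle to produce a stably parallelisable bordism of $W$ rel.\ boundary'' --- is exactly what Wall's invariant obstructs. In $A^{\an{q{+}1}}_{2q{+}2}$ the isomorphism $\Phi$ records $\chi^2 = \alpha\otimes\alpha \in \pi_q(SO)\otimes\pi_q(SO)$, which is nonzero in these dimensions, so $[W(S\alpha,S\alpha)]$ is \emph{not} bordant rel.\ boundary to a parallelisable manifold within the $q$-connected category. Surgery on a middle-dimensional $(q{+}1)$-sphere whose (stable) normal bundle is classified by a nontrivial $\alpha$ cannot be performed directly; and a complex structure on that bundle does not help, since the issue is nontriviality in $\pi_q(SO)$, not the absence of a lift to $\pi_q(U)$. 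What would be needed is a parallelisable filling of $\del W$ that is \emph{not} accessible via bordism rel.\ boundary from $W$ in $A^{\an{q{+}1}}$, i.e.\ precisely what the map $\eta\circ\del$ measures; your approach does not touch this.

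The second approach is also conflating two different constructions. In Lemma~\ref{lem:ker_alpha}, vanishing of a Samelson product enters as a hypothesis in Schultz's formula for the Milnor--Munkres--Novikov pairing $\tau_{p,q}$ (bundle datum paired with a homotopy sphere), not for the plumbing pairing $\sigma_{p,q}$ that produces $W(S\alpha,S\alpha)$. Showing $\alpha\ast\alpha=0\in\pi_{2q}(SO)$ because $\alpha$ comes from $U$ is plausible in some dimensions, but there is no stated theorem here that converts this into a simplification of $W(S\alpha,S\alpha)$, let alone a sphere-bundle structure ``up to bordism''; you would need to supply or locate that link. The closing observation that the question ultimately sits with invariants in $\Omega_*^{SU}\to\Omega_*^{\mathrm{fr}}$ and Adams-spectral-sequence bookkeeping on $MSU$ is a fair assessment of why this remains open, but it is a statement of the difficulty rather than a route to a proof.
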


Notice that while Theorem \ref{thm:homotopy_sphere} shows that many exotic spheres
are not Stein fillable, those same homotopy spheres might admit symplectically fillable contact structures.

\begin{Problem}[Symplectic fillability of homotopy spheres] \label{prob:symplectically-fillable-homotopy-spheres}
%%%%%%%%%%%%%%%
Do all homotopy spheres admit symplectically fillable contact
structures? If not, then determine all those that do.
\end{Problem}
The positive resolution of this problem would imply that symplectic fillability is invariant under the
action of the group of exotic spheres under connect sum. 
Notice that although our Filling Theorem~\ref{thm:Stein} is not useful in searching for 
symplectic fillings which are not also Stein fillings, Corollary \ref{cor:alpha-bordism}
may be helpful in finding symplectically fillable contact structures on homotopy spheres
which do not admit Stein fillings.
%%%%%%%%%%%%%%%%%%%%%%%%%%%%%%%%%%%%%%%%%%%%%%%%%%%%%%%%%%%%%%%%%%%%%%%%%%%%%%%%%%%%%%%%%%%

% (fold)
\section{Further properties of Stein fillable manifolds}
%%%%%%%%%%%%%%%%%%%%%%%%%%%%%%%%%%%%%%%%%%%%%%%%%%%%%%%%%%%%%%%%%%%%%%%%%%%%%%%%%%%%%%%%%%%
\label{sec:further}
In this section we discuss several topological properties of Stein fillable
manifolds.

%%%%%%%%%%%%%%%%%%%%%%%%%%%%%%%%%%%%%%%%%%%%%%%%%%%%%%%%%%%%%%%%%%%%%%%%%%%%%%%%%%%%%%%%%%%

\subsection{(Co)homological obstructions to Stein fillability} % (fold)
%%%%%%%%%%%%%%%%%%%%%%%%%%%%%%%%%%%%%%%%%%%%%%%%%%%%%%%%%%%%%%%%%%%%%%%%%%%%%%%%%%%%%%%%%%%
\label{subsec:topological_obstructions_to_stein_fillability}

In this subsection we discuss topological obstructions to Stein
fillability, which are not present in dimension 3, and some of their
consequences.  (See also \cite{PP} and \cite{EKP} for similar
obstructions.)  As usual, let $(M, \acs)$ be an almost contact
manifold with associated stable complex structure $\scxs$,
let $(B^{q-1}_{\scxs}, \eta^{q-1}_{\scxs})$ be the complex normal 
$(q-1)$-type of $(M, \scxs)$ and let $\bscxs \colon M \to B^{q-1}_\scxs$
be a $\scxs$-compatible normal $(q-1)$-smoothing.
We begin by observing that there is a commutative diagram,
\[ \xymatrix{& & B^{q{-}1}_{\scxs} \ar[d]^{p_{B} \times \eta^{q-1}_\scxs} \ar[drr]^{\eta^{q-1}_\scxs} \\
M \ar[urr]^{\bnacs} \ar[rr]_-{p_M \times \scxs} &  & P_{q{-}1}(M) \times BU
\ar[rr]_-{{\rm pr}_{BU}} & & BU, } \]
where $P_{q{-}1}(B) \simeq P_{q{-}1}(M)$ is the $(q{-}1)^{st}$
Postnikov stage of $M$ and $B$, and the maps $p_M \colon M \to
P_{q{-}1}(M)$ and $p_B \colon B \to P_{q{-}1}(M)$ are $q$-equivalences. 
We see that the induced homomorphism
\[ (p_B \times \eta^{q-1}_\scxs)_* \colon \Omega_{2q{+}1}(B^{q-1}_\scxs; \eta^{q-1}_\scxs) 
\to \Omega_{2q{+}1}^U(P_{q{-}1}(M)) \]
is such that $(p_B \times \eta^{q-1}_\scxs)_*([M, \bscxs]) = [(M, \scxs), p_M]$.
Applying Theorem \ref{thm:Stein} we obtain
\begin{Lemma} \label{lem:Pq-1(M)}
%%%%%%%%%%%%%
If  $[(M, \scxs), p_M] \neq 0 \in \Omega^U_{2q{+}1}(P_{q{-}1}(M))$,
then $(M, \scxs)$ does not admit a Stein fillable contact structure. \qed
\end{Lemma}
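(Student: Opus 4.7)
My plan is to prove the contrapositive: assume $(M, \acs)$ admits a Stein fillable contact structure, and deduce that $[(M, \scxs), p_M] = 0 \in \Omega^U_{2q{+}1}(P_{q{-}1}(M))$. The argument is essentially immediate given what is already in place: the Filling Theorem~\ref{thm:Stein} provides vanishing of the obstruction in $\Omega_{2q{+}1}(B^{q-1}_\scxs; \eta^{q-1}_\scxs)$, and the commutative diagram displayed before the lemma shows how to push this vanishing forward to $\Omega^U_{2q{+}1}(P_{q{-}1}(M))$.

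First, by Theorem~\ref{thm:Stein}, Stein fillability of $(M, \acs)$ implies that for any $\scxs$-compatible normal $(q{-}1)$-smoothing $\bscxs \colon M \to B^{q-1}_\scxs$ we have $[M, \bscxs] = 0 \in \Omega_{2q{+}1}(B^{q-1}_\scxs; \eta^{q-1}_\scxs)$. Next I would justify the existence of the map $p_B \colon B^{q-1}_\scxs \to P_{q-1}(M)$ appearing in the diagram: since $\bscxs$ is a $q$-equivalence it identifies the $(q{-}1)^{st}$ Postnikov stages of $M$ and $B^{q-1}_\scxs$, and $p_B$ is then defined (up to homotopy) as the $(q{-}1)^{st}$ Postnikov approximation of $B^{q-1}_\scxs$, chosen so that $p_B \circ \bscxs \simeq p_M$. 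Together with $\eta^{q-1}_\scxs$ this furnishes a map of stable complex bundles
\[ (p_B \times \eta^{q-1}_\scxs) \colon (B^{q-1}_\scxs, \eta^{q-1}_\scxs) \longrightarrow (P_{q-1}(M) \times BU, \mathrm{pr}_{BU}). \]

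Functoriality of bordism with respect to maps of bundles (cf.\ Definition~\ref{def:Beta-bordism}) then yields a homomorphism
\[ (p_B \times \eta^{q-1}_\scxs)_* \colon \Omega_{2q{+}1}(B^{q-1}_\scxs; \eta^{q-1}_\scxs) \longrightarrow \Omega_{2q{+}1}(P_{q-1}(M) \times BU; \mathrm{pr}_{BU}) \cong \Omega^U_{2q{+}1}(P_{q-1}(M)), \]
where the last identification records that a $(P_{q-1}(M) \times BU, \mathrm{pr}_{BU})$-structure is simply a stable complex structure together with a map to $P_{q-1}(M)$. On the representative $(M, \bscxs)$ the pushed-forward class is carried to $(M, p_B \circ \bscxs)$ with the stable complex structure $\eta^{q-1}_\scxs \circ \bscxs = \scxs$; since $p_B \circ \bscxs \simeq p_M$, this is precisely $[(M, \scxs), p_M]$. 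Therefore $[(M, \scxs), p_M] = (p_B \times \eta^{q-1}_\scxs)_*(0) = 0$, contradicting the hypothesis and completing the proof.

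I do not expect any serious obstacle. The only minor point requiring care is the construction of $p_B$ as a $q$-equivalence compatible with $p_M$, but this is standard obstruction theory given that $\bscxs$ is a $q$-equivalence by the defining property of normal $(q{-}1)$-smoothings (Definition~\ref{def:normal-k-smoothing}). Everything else reduces to naturality of the $(B, \eta)$-bordism construction.
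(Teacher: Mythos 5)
Your proposal is correct and takes essentially the same route as the paper: the paper deduces the lemma (hence the \qed on the statement) immediately from the commutative diagram displayed just above it together with Theorem~\ref{thm:Stein}, exactly as you argue via the contrapositive. Your additional remarks on constructing $p_B$ from the $(q{-}1)^{st}$ Postnikov approximation, and on identifying $\Omega_{2q{+}1}(P_{q-1}(M)\times BU;\mathrm{pr}_{BU})$ with $\Omega^U_{2q{+}1}(P_{q-1}(M))$, simply make explicit what the paper leaves implicit in that diagram.
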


The following proposition combines Lemma \ref{lem:Pq-1(M)} with other elementary
observations to give obstructions to Stein fillability.  Let $\pi = \pi_1(M)$ denote
the fundamental group of $M$.

\begin{Proposition} \label{prop:fill1}
%%%%%%%%%%%%%%%%%%
Suppose that $(M, \acs)$ is an almost contact manifold of dimension
$2q{+}1 \geq 5$ that admits a Stein fillable contact structure and let $u
\colon M \to K(\pi, 1)$ be the classifying map of the universal
cover of $M$.  Then the following hold:
\begin{enumerate}
\item \label{prop:fill1-fc} The homomorphism $u_* \colon H_i(M; \Z)
  \to H_i(K(\pi, 1))$ vanishes for $q{+}2 \leq i \leq 2q{+}1$.
  In particular $u_*([M]) = 0 \in H_{2q{+}1}(K(\pi, 1))$\ , where $[M]$
  denotes the fundamental class of $M$.
\item $M$ is not aspherical.
\item \label{prop:fill1-charc} For any $\beta \in H^j(P_{q{-}1}(M))$
  and for any $k$-tuple $\{i_1, \dots ,i_k \}$ of positive integers
  with $ 2q{+}1-j = 2\left(\Sigma _{n=1}^k \ i_n\right)$, all products
  of the form $p_M^*(\beta) \cup c_{i_1}(\scxs) \cup \dots \cup
  c_{i_k}(\scxs) \in H^{2q{+}1}(M)$ vanish.
\item \label{prop:fill1-chernc} For any $k$-tuple $\{i_1, \dots ,i_k
  \}$ of positive integers with $\Sigma _{n=1}^k\ i_n = q$, all
  products of Chern classes $c_{i_1}(\scxs) \cup \dots \cup
  c_{i_k}(\scxs) \in H^{2q}(M)$ vanish.
\end{enumerate}
\end{Proposition}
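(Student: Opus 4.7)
The strategy is to exploit the Stein filling $(W,\acxs)$ furnished by the Filling Theorem \ref{thm:Stein} together with Corollary~\ref{cor:h-principle}: $W$ is a compact $(2q{+}2)$-dimensional almost complex manifold with $\partial W = M$ admitting a handle decomposition with handles of index at most $q{+}1$. All four conclusions will follow from three basic features of such a filling. First, $W$ is homotopy equivalent to a CW complex of dimension at most $q{+}1$, so $H_i(W;A) = H^i(W;A) = 0$ in degrees $i > q{+}1$ for any coefficients $A$. Second, the dual handle decomposition builds $W$ from $M$ by attaching handles of index $\geq q{+}1 \geq 3$; hence the inclusion $\iota \colon M \hookrightarrow W$ is a $q$-equivalence, and in particular $\pi_1(W) \cong \pi_1(M) = \pi$. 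Third, the trivial normal bundle of the codimension-one submanifold $M \subset W$ ensures that the stable complex structure $\scxs_W$ induced by $\acxs$ restricts along $M$ to $\scxs$, so $c_j(\scxs) = \iota^* c_j(\scxs_W)$ for every $j$.

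For parts (1) and (2): the isomorphism $\pi_1(W) \cong \pi$ lets one extend $u$ to $\tilde u \colon W \to K(\pi,1)$ with $\tilde u \circ \iota \simeq u$. Factoring $u_*$ as $H_i(M) \xra{\iota_*} H_i(W) \xra{\tilde u_*} H_i(K(\pi,1))$ immediately yields (1), since the middle group vanishes for $i \geq q{+}2$. If $M$ were aspherical, $u$ would be a homotopy equivalence and $u_*([M])$ a fundamental class of $K(\pi,1)$, contradicting (1); this gives (2).

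For parts (3) and (4): because $\iota$ is a $q$-equivalence, the induced map $P_{q-1}(M) \to P_{q-1}(W)$ is a homotopy equivalence and so $p_M$ extends to a map $\tilde p \colon W \to P_{q-1}(M)$ with $\tilde p \circ \iota \simeq p_M$. The class
\[
\tilde p^*(\beta) \cup c_{i_1}(\scxs_W) \cup \dots \cup c_{i_k}(\scxs_W) \in H^{2q+1}(W)
\]
appearing in the hypotheses of (3) has total degree $j + 2\sum i_n = 2q{+}1 > q{+}1$, hence vanishes; pulling back via $\iota^*$ then gives (3). Part (4) is the analogous argument with $\beta = 1$, where the total degree is $2q$, which exceeds $q{+}1$ precisely because of the standing assumption $q \geq 2$.

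I do not expect a serious obstacle here: all four statements reduce to the single observation that a Stein filling of $M$ has the homotopy type of a $(q{+}1)$-complex, transported to $M$ via the extensions $\tilde u$, $\tilde p$ and the restriction of $\scxs_W$. The one point requiring a brief justification is that $\pi_1(W) = \pi_1(M)$, needed to produce $\tilde u$ and $\tilde p$, and this is immediate from the index bound on the dual handles combined with $q \geq 2$.
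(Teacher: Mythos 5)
Your proof is correct. Parts (1) and (2) follow essentially the same route as the paper (factoring $u$ through the filling $W$, which has no homology above degree $q{+}1$). For parts (3) and (4), however, you take a genuinely different and arguably cleaner path than the paper does. The paper's proof of (3) appeals to Lemma~\ref{lem:Pq-1(M)} to show $[(M,\scxs),p_M]=0\in\Omega^U_{2q+1}(P_{q-1}(M))$ and then invokes bordism invariance of the Chern number $\bigl\langle p_M^*(\beta)\cup c_{i_1}(\scxs)\cup\cdots\cup c_{i_k}(\scxs),[M]\bigr\rangle$, using $H^{2q+1}(M)\cong\Z$ to convert vanishing of the integer into vanishing of the class. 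You instead produce the map $\tilde p\colon W\to P_{q-1}(M)$ (valid because $\iota$ being a $q$-equivalence makes $P_{q-1}(\iota)$ a homotopy equivalence) and observe that the entire cohomology class lives in $H^{2q+1}(W)=0$ before restriction to $M$; no appeal to the fundamental-class pairing or bordism invariance is needed. The payoff is most visible in (4): the paper must apply (3) with $\beta\in H^1$ and then work through a Poincar\'e-duality argument over $\Q$ and over $\Z/p$ for all primes $p$ to conclude vanishing integrally, whereas you note directly that $c_{i_1}(\scxs_W)\cup\cdots\cup c_{i_k}(\scxs_W)\in H^{2q}(W)=0$ (using $2q>q{+}1$, i.e.\ $q\geq 2$) and pull back. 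The one small point worth making explicit, which you do correctly assert, is that the stable normal complex structure $\scxs_W$ restricts to $\scxs$ along $M=\partial W$; this is exactly what the paper records in the proof of Lemma~\ref{lem:topological_filling} ($J|_{\partial W}=\acs$), and it justifies $\iota^*c_j(\scxs_W)=c_j(\scxs)$.
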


\begin{proof}
%%%%%%%%%%%%%%%
(1) Let $(W, \bar \scxs_W)$ be a $B$-nullbordism of $(M, \bar
\scxs)$.  After surgery we may assume that $W$
has no handles in dimension greater than $q{+}1$ and hence $H_{i}(W; \Z)
= 0$ for $i > q{+}1$.  Now $\bar \nu \colon M \to B$ factors over $W$
and $u \colon M \to K(\pi, 1)$ can be factored as $u_B \circ \bar \nu
\colon M \to B \to K(\pi, 1)$, where $u_B \colon B \to K(\pi, 1)$
classifies the universal covering of $B$.

(2) If $M$ is aspherical then $M \simeq K(\pi, 1)$ and so $u_*([M])$ is
a generator of the group $H_{2q{+}1}(K(\pi, 1)) \cong \Z$.  Now apply part (1).

(3) The integer 
\[ \an{p_M^*(\beta) \cup c_{i_1}(\scxs) \cup \thinspace\dots
\cup \thinspace c_{i_k}(\scxs), [M]}  \]
is an invariant of unitary bordism of $P_{q-1}(M)$.  By 
Lemma \ref{lem:Pq-1(M)} this integer vanishes for the unitary 
$P_{q-1}(M)$-manifold $((M, \scxs), p_M)$.  Since $H^{2q{+}1}(M) \cong \Z$,
this finishes the proof.

(4) We apply part (3) with $\beta \in H^1(P_{q{-}1}(M)) = H^1(M)$ and
then use a version of part (1) with mod $\Z/p$ coefficients to
conclude that $c_{i_1}(\scxs) \cup \dots \cup c_{i_k}(\scxs)$ vanishes
in $H^{2q}(M; \Q)$ and also  in $H^{2q}(M; \Z/p)$ for all primes $p$.  It
follows that $c_{i_1}(\scxs) \cup \dots \cup c_{i_k}(\scxs) = 0 \in
H^{2q}(M)$.
\end{proof}
Proposition~\ref{prop:fill1} allows us to prove the following

\begin{Corollary} \label{cor:choice_of_acs} 
In general, the Stein fillability of an almost
  contact manifold $(M, \acs)$ depends on the choice $\acs$ and not
  just the underlying diffeomorphism type of $M$.
\end{Corollary}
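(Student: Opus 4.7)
The plan is to exhibit a single closed smooth $(2q{+}1)$-manifold admitting two almost contact structures, one Stein fillable and one not. Take $M := S^1 \times S^2 \times S^4$ (so $q = 3$), and consider the $8$-dimensional filling $W := S^1 \times S^2 \times D^5$. Then $\partial W = M$, the product handle decomposition of $W$ involves only handles of index $\leq 3 \leq q{+}1$, and $W$ is parallelisable (since $W$ has the homotopy type of the $3$-complex $S^1 \times S^2$, and the classifying map $W \to BSO(8)$ is null-homotopic because $w_1(W) = 0$ and $\pi_i(BSO(8)) = 0$ for $i = 2, 3$). In particular $W$ is almost complex, so Eliashberg's $h$-principle (Corollary \ref{cor:h-principle}) produces a Stein structure on $W$ and an induced Stein fillable almost contact structure $\acs_0$ on $M$. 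The stabilisation $\scxs_0 := S\acs_0$ extends over the trivial stable tangent bundle of $W$, so all its Chern classes on $M$ vanish.

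To construct a non-Stein-fillable $\acs_1$, we exploit the torsor action of $[M, SO/U]$ on the set of stable complex structures lifting the real bundle $\tau_M$, where $SO/U$ is the homotopy fibre of $BU \to BSO$. By Bott periodicity $\pi_5(SO) = \pi_6(SO) = 0$, and the long exact sequence of this fibration yields $\pi_6(SO/U) \cong \pi_6(BU) = \Z$ under an isomorphism that is detected by the universal Chern class $c_3$. By the Künneth formula, $H^6(M;\Z) \cong \Z$ and is generated by the pull-back of the fundamental class under the collapse $M \to S^2 \wedge S^4 \cong S^6$. Composing this collapse with a representative of the generator of $\pi_6(SO/U)$ produces a class $d \in [M, SO/U]$ with trivial lower Chern data and with $c_3$-contribution a generator of $H^6(M;\Z)$. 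The action of $d$ on $\scxs_0$ yields a stable complex structure $\scxs_1$ on $M$ with $c_3(\scxs_1) \neq 0$, and Lemma \ref{lem:stable_to_unstable} then provides an almost contact structure $\acs_1$ on $M$ with $S\acs_1 = \scxs_1$.

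Finally, Proposition \ref{prop:fill1}(4) shows that $(M,\acs_1)$ is not Stein fillable, since the product $c_3(S\acs_1) \in H^{2q}(M;\Z)$ is non-zero. Thus $(M, \acs_0)$ and $(M, \acs_1)$ are two almost contact structures on the same smooth manifold with differing Stein fillability, which proves the corollary. The main subtlety is the obstruction-theoretic step showing that modification by the primary class in $H^6(M; \pi_6(SO/U))$ actually changes $c_3$ by the expected amount; this is unproblematic here precisely because both $\scxs_0$ and $d$ have trivial lower Chern data, so no Whitney-sum cross-terms complicate the additive formula for $c_3$.
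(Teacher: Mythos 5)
Your argument is correct, but it takes a more circuitous route than the paper's very short proof. The paper simply takes $M = S^1 \times S^6$: the trivial almost contact structure is Stein fillable because $M = \partial(S^1 \times D^7)$, while the almost contact structure induced by the octonion almost complex structure on $S^6$ (which has $c_3 = 2 \neq 0$) is not Stein fillable by Proposition~\ref{prop:fill1}~\eqref{prop:fill1-chernc}. Your version works with $M = S^1 \times S^2 \times S^4$ and re-derives the existence of an almost contact structure with $c_3 \neq 0$ from first principles, by identifying $\pi_6(SO/U) \cong \pi_6(BU) \cong \Z$ and pulling back a generator along the collapse $M \to S^6$; this buys nothing over the paper's argument beyond illustrating the general obstruction-theoretic mechanism (on a stably parallelisable odd-dimensional manifold, stable complex structures form a torsor over $[M, SO/U]$, and the bottom Chern class of the difference is detected by the primary invariant in $H^{2q}(M;\pi_{2q}(SO/U))$). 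Two small inaccuracies in your write-up, neither of which undermines the conclusion: first, $\pi_2(BSO(8)) = \pi_1(SO(8)) = \Z_2 \neq 0$, so the null-homotopy of $\tau_W$ actually requires the observation that $w_2(W)=0$ before the remaining obstructions vanish for dimension reasons --- or, more directly, $T(S^1 \times S^2)$ is already trivial, so $TW \cong \varepsilon^8$ by inspection; second, the image of the generator of $\pi_6(SO/U)$ under the fibre inclusion into $BU$ is classified by a bundle over $S^6$ whose $c_3$ is \emph{twice} a generator of $H^6(S^6)$, not a generator (every complex bundle over $S^6$ has even $c_3$), though of course twice a generator is still nonzero, which is all you need for Proposition~\ref{prop:fill1}~\eqref{prop:fill1-chernc}.
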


\begin{proof} %\label{ex:S1xS^6}
  The manifold $M = S^1 \times S^6$ clearly admits a Stein fillable
  almost contact structure $\varphi_0$ since $M = \del (S^1 \times
  D^7)$.  On the other hand, $S^6$ admits an almost complex structure
  $\acxs$ with $c_3(\acxs) = 2 \in H^6(S^6)$.  For the induced almost
  contact structure $\acs _1$ on $M$, Proposition \ref{prop:fill1}
  \eqref{prop:fill1-chernc} implies that $(M, \acs_1)$ is not Stein
  fillable.
\end{proof}

As a consequence of Proposition \ref{prop:fill1}, we obtain
obstructions to the Stein fillability of certain Boothby-Wang contact
structures.

\begin{Example}[Boothby-Wang contact structures]\label{ex:Boothby_Wang}
A \emph{Boothby-Wang contact structure} on a (nontrivial) principal
$S^1$-bundle
$$S^1 \longrightarrow E \stackrel{\pi} \longrightarrow B$$ over a
symplectic base $(B, \omega )$ of dimension $2q$ with
$c_1(E)=[\frac{\omega}{2\pi }]$ is given as the kernel of an
$S^1$-invariant $1$-form $\alpha$ which is non-vanishing on the fibers
and satisfies $ d\alpha = \pi^*(\omega)$ for some integral symplectic
form $\omega$.
\end{Example}
Note that the associated disc bundle of the principal $S^1$-bundle $E$
is a strong symplectic filling (see, e.g., \cite{Geiges&Stipsicz10},
Lemma 3), which is not Stein since it is homotopy equivalent to the
$2q$-dimensional base $B$. (However, if the base is $\mathbb{C}P^2$
and the Euler class of the bundle is a generator of
$H^2(\mathbb{C}P^2; {\mathbb {Z}})$, then the total space is the
$5$-sphere which is of course Stein fillable.) On the other hand we do
have the following example:

\begin{Example}[Lens spaces] \label{ex:L5}
%%%%%%%%%%%%%%%%
Let $L^5_k$ be the standard $5$-dimensional lens space with cyclic
fundamental group of order $k$.  That is, $L^5_k$ is the quotient of 
\[ S^5=\{ (z_1,
z_2, z_3)\in \C ^3\mid \vert z_1\vert ^2+\vert z_2\vert ^2+\vert
z_3\vert ^2=1\}, \] 
with the action of a generator of $\Z_k$ defined by 
\[
(z_1, z_2, z_3) \mapsto (\mu z_1, \mu z_2, \mu z_3)
\]
for $\mu \in \C$ a $k^{th}$ root of unity.  The resulting manifold inherits
an $S^1$-bundle projection $\pi \colon L^5_k \to \C P^2$.  
Since the classifying map of the universal
cover $u$ induces a non-trivial map
$$u_* \colon H_i(L^5_k) \to H_i(K(\Z_k, 1)),$$ we conclude that
although the lens spaces $L^5_k$ are symplectically fillable (by the
Boothby-Wang construction), by Lemma \ref{prop:fill1} \eqref{prop:fill1-fc} they are not Stein
fillable for all $k \geq 2$. (Obstructions for Stein fillability of
these manifolds were already noticed in \cite{EKP}, cf. also
\cite{PP}.)
\end{Example}

In conclusion, we see both examples of Boothby-Wang contact structures which are
Stein fillable, and others which are not. This observation leads to the following question:

\begin{Problem}[Fillability of Boothby-Wang manifolds]
%%%%%%%%%%%%%%%
Determine which Boothby-Wang manifolds are Stein/exactly fillable.
\end{Problem}

\noindent By recent work of Massot, Niederkr\"uger and Wendl
\cite{Massot12}, Proposition \ref{prop:fill1} also gives examples of
exactly fillable contact structures that are not Stein fillable in all
dimensions. Such examples were discussed in \cite{Bowden12} for
3-dimensional manifolds, although in this case the non-fillability
only applied to certain contact structures rather than to the
manifolds themselves. We are now in the position to provide the proof of 
Theorem~\ref{thm:exact-notStein} from the Introduction:

\begin{proof}[Proof of Theorem~\ref{thm:exact-notStein}]
By \cite[Theorem~C]{Massot12}, there are exact symplectic fillings 
of the form $M \times [0,1]$ such that both ends are convex in all dimensions $2q{+}2$. The
manifolds $M$ are quotients of contractible Lie groups and are
consequently aspherical. 
 After attaching a Weinstein $1$-handle to $M \times [0, 1]$, 
we obtain an exact filling of $N = -M \# M$.  Assuming that $q > 1$, $\pi_1(N)$ is
the free product two copies of $\pi_1(M)$ and so there is a homotopy equivalence
$K(\pi_1(N), 1) \simeq K(\pi_1(M), 1) \vee K(\pi_1(M), 1)$.
Since $M$ is aspherical, we see that the classifying map of the universal cover of $N$ 
maps non-trivially on $H_{2q{+}1}(N)$. Hence by Proposition~\ref{prop:fill1} \eqref{prop:fill1-fc}, $N$ is not
Stein fillable if $q > 1$.
\end{proof}
% subsection cohomological obstructions(end)
%%%%%%%%%%%%%%%%%%%%%%%%%%%%%%%%%%%%%%%%%%%%%%%%%%%%%%%%%%%%%%%%%%%%%%%%%%%%%%%%%%%%%%%%%%%

%%%%%%%%%%%%%%%%%%%%%%%%%%%%%%%%%%%%%%%%%%%%%%%%%%%%%%%%%%%%%%%%%%%%%%%%%%%%%%%%%%%%%%%%%%%
\subsection{Stein fillability and orientations}  \label{subsec:orientation}
%%%%%%%%%%%%%%%%%%%%%%%%%%%%%%%%%%%%%%%%%%%%%%%%%%%%%%%%%%%%%%%%%%%%%%%%%%%%%%%%%%%%%%%%%%%
A cooriented contact structure $\xi =
\ker (\alpha)$ determines an orientation of the underlying $(2q{+}1)$-manifold $M$,
since the form $\alpha \wedge (d\alpha)^q$ is nowhere vanishing. 
When we speak of an oriented manifold admitting a contact structure,
we mean that the orientation determined by the contact structure is
the given one. Moreover, if the dimension of $M$ is of the form
$4k{+}1$, and hence the dimension of the Stein filling of $M$ is of the
form $4k+2$, then taking the conjugate complex structure on $W$
reverses orientations. The resulting Stein fillable contact
structure then gives the opposite coorientation of $\xi$,
i.e. replaces $\alpha$ by $-\alpha$, which in turn swaps the orientation
determined by the contact structure. So in these dimensions it is clear that $M$ is
Stein fillable if and only if $-M$ is.

\dcedit{
However, if the dimension of $M$ is $4k+3$, then it is not immediately clear
that $M$ is Stein fillable if and only if $-M$ is Stein fillable:
indeed the statement is false in dimension $3$, with many examples given by
Seifert fibred spaces, the most famous of which is the Poincar\'{e}
homology sphere \cite{Lisca}.} On the other hand, Eliashberg's $h$-principle implies the following

\begin{Proposition} \label{prop:orientation}
%%%%%%%%%%%%%%%%%%%%
Let $(M, \acs)$ be an almost contact $(2q + 1)$-dimensional manifold with $q \geq 2$ and associated
stable complex strcovucture $\scxs$.  Then $(M, \scxs)$ is Stein fillable if and only if $(-M, -\scxs)$ is.
\end{Proposition}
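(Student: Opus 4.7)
The plan is to deduce the proposition directly from the Filling Theorem (Theorem \ref{thm:Stein}), exploiting the algebraic fact that orientation reversal negates bordism classes in any $(B, \eta)$-bordism group. The key observation is that part~(3) of Theorem \ref{thm:Stein} allows one to work with any stable complex bundle $(B, \eta)$, not necessarily the complex normal $(q{-}1)$-type itself, and so $M$ and $-M$ can be handled within the \emph{same} bordism group.

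Assuming that $(M, \acs)$ is Stein fillable, I would first pick, by Theorem~\ref{thm:Stein}(2), a $\scxs$-compatible normal $(q{-}1)$-smoothing $\bscxs \colon M \to B^{q-1}_\scxs$ with $[M, \bscxs] = 0$ in $\Omega_{2q+1}(B^{q-1}_\scxs; \eta^{q-1}_\scxs)$. Applying the orientation-reversal construction recalled in Section~\ref{subsec:surgery_settingA} to $\bscxs$ produces a $(B^{q-1}_\scxs, \eta^{q-1}_\scxs)$-manifold $(-M, -\bscxs)$. Using the identity $[-M, -\bscxs] = -[M, \bscxs]$, valid in the abelian bordism group, we obtain
\[
[-M, -\bscxs] = -[M, \bscxs] = 0 \in \Omega_{2q+1}(B^{q-1}_\scxs; \eta^{q-1}_\scxs).
\]
I would then invoke Theorem~\ref{thm:Stein}(3) applied to $(-M, -\acs)$, with the stable complex bundle $(B^{q-1}_\scxs, \eta^{q-1}_\scxs)$ and the $(B, \eta)$-structure $-\bscxs$, to conclude that $(-M, -\acs)$ is Stein fillable. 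The reverse implication would be entirely symmetric.

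The one step needing verification is that $-\bscxs$ genuinely qualifies as a $(-\scxs)$-compatible normal $(q{-}1)$-smoothing of $-M$, so that Theorem~\ref{thm:Stein}(3) actually applies. Since $-\bscxs$ coincides with $\bscxs$ as a map of underlying topological spaces, it has the same connectivity and thus remains a $(q{-}1)$-equivalence; and the compatibility identity $\eta^{q-1}_\scxs \circ (-\bscxs) = -\scxs$ holds essentially by definition, because $-\scxs$ is itself obtained from $\scxs$ by the identical orientation-reversal pullback construction. I expect this routine bookkeeping to be the only genuine concern in the argument. The point of the approach is precisely that the homotopy-theoretic reformulation sidesteps the real geometric difficulty flagged in the paragraph preceding the proposition, namely that complex conjugation of a Stein filling $(W, J)$ fails to reverse the boundary orientation when $\dim M \equiv 3 \pmod 4$.
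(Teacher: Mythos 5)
Your argument is correct, but it takes a genuinely different route from the paper's. The paper gives a direct geometric proof: since a Stein filling $W$ has nonempty boundary, its tangent bundle splits as complex bundles $(TW,J) \cong (E, J|_E) \oplus \underline{\mathbb{C}}$, and conjugating only the trivial $\underline{\mathbb{C}}$ factor produces an almost complex structure $\bar J$ inducing the orientation $-W$; applying Eliashberg's $h$-principle (Corollary~\ref{cor:h-principle}) to $(W,\bar J)$ gives the Stein filling of $(-M,-\scxs)$. Your proof instead routes through the bordism machinery of Theorem~\ref{thm:Stein}, exploiting the algebraic identity $[-M,-\bscxs]=-[M,\bscxs]$, which holds by the definition of inverses in $\Omega_{2q{+}1}(B^{q{-}1}_\scxs;\eta^{q{-}1}_\scxs)$, together with the paper's assertion in Section~\ref{subsec:cpx} that $(-M,-\bscxs)$ is a $(-\scxs)$-compatible $(B^{q-1}_\scxs,\eta^{q-1}_\scxs)$-manifold. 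Both arguments are valid and ultimately rest on the $h$-principle (yours via the Filling Theorem, the paper's directly). The paper's approach is more self-contained and gives an explicit construction of the reversed-orientation filling from the original one; yours is a formal consequence of the Filling Theorem that highlights how the bordism-theoretic characterisation is manifestly symmetric under orientation reversal.

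One small slip: you say $-\bscxs$ ``remains a $(q{-}1)$-equivalence,'' but what Definition~\ref{def:normal-k-smoothing} of a normal $(q{-}1)$-smoothing actually requires is a $q$-equivalence. This is what your argument in fact yields, since $-\bscxs$ is literally the same continuous map as $\bscxs$, so the substance of the verification is unaffected.
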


\begin{proof}
%%%%%%%%%%%%%
The fact that any Stein filling $W$ of $(M, \scxs)$ is a manifold with boundary means that $TW$ admits a nonvanishing section and thus as complex bundles
$$(TW,J) \cong (E,J|_E) \oplus \underline{\mathbb{C}}.$$
We then define an almost complex structure $\bar{J}$ by taking $J|_E$ on $E$ and the conjugate complex structure on $\underline{\mathbb{C}}$. The almost complex structure $\bar{J}$ then 
induces the orientation $-W$, and applying Eliashberg's $h$-principle gives a Stein fillable contact structure on $-M$ with associated stable complex structure $-\scxs$. 
\end{proof}
%%%%%%%%%%%%%%%%%%%%%%%%%%%%%%%%%%%%%%%%%%%%%%%%%%

\section{Subcritical Stein fillings and Stein fillings of products} \label{sec:sub-critical}
%%%%%%%%%%%%%%%%%%%%%%%%%%%%%%%%%%%%%%%%%%%%%%%%%%%%%%

We fix a closed almost contact $(2q{+}1)$-manifold $(M, \acs)$ and as
usual we let $\scxs = \nacs$ denote the stable complex structure
induced by the almost contact structure $\acs$.  A {\em subcritical
  Stein filling} of $(M, \acs)$ is a Stein filling $(W, J)$ of $(M,
\acs)$ where $W$ admits a handle decomposition with handles of
dimension $q$ and less.  Subcritical Stein fillings have special
properties; see \cite{Cieliebak&Eliashberg12}.

Another filling question is the following: suppose that $(F, \acxs_F)$
is an almost complex structure on a closed, \dcedit{oriented}
surface $F$.  Then we can ask if the product almost contact manifold
$(M \times F, \acs \times \acxs_F)$ admits a Stein filling.  It is
easy to see that if $(M,\acs)$ has a subcritical Stein filling, then
$(M \times F, \acs \times \acxs_F)$ is Stein fillable: if $(W,J_W)$ is
the subcritical filling of $(M,\acs)$ then $(W\times F, J_W\times
J_F)$ is an almost complex manifold with boundary $(M\times F, \acs
\times J_F)$ which admits a handle decomposition with handles of
dimension $q{+}2$ and less (and the dimension of $W\times F$ is $2q+4$),
therefore Eliashberg's $h$-principle implies the result.

We shall further relate the two questions about Stein fillings to the
bordism theory of $(B^q_\scxs, \eta^q_\scxs)$, the complex normal
$q$-type of $(M, \scxs)$.  
We pose five related questions:
\begin{enumerate}
\item[(A)] When does $(M, \acs)$ admit a subcritical Stein filling?
\item[(B)] When does $(M \times F, \acs \times \acxs_F)$ admit a Stein filling?
\item[(C)] When does $[M, \bscxs] = 0 \in \Omega_{2q{+}1}(B^q_\scxs; \eta^q_\scxs)$ hold?
\item[(D)] When does $\bscxs_*([M]) = 0 \in H_{2q{+}1}(B^q_\scxs)$ hold?
\item[(E)] When does $TH_q(M)$, the torsion subgroup of $H_q(M)$, vanish?
\end{enumerate}

We next graphically summarise the relationship between positive
answers to the questions above, writing $g(F) > 0$ for the case
where $F$ has positive genus; see Theorem
\ref{thm:sub-c_and_products} below.
\[ \xymatrix{ (A) \ar@2{->}[dr] \ar@2{->}[rr] && (B) \ar@2{->}[rr]
\ar@2{->}@/^1.5pc/@{.>}[dl]^(0.4){\text{if $g(F)>0$}} & & (D) \ar@2{->}[rr] & & (E) \\ & (C)
\ar@2{->}[ur] } \]

\begin{Theorem}[Subcritical Filling Theorem] \label{thm:sub-c_and_products}
%%%%%%%%%%%%%%%%%%%
Let $(B^q_\scxs, \eta^q_\scxs)$ be the complex normal $q$-type of $(M, \scxs)$
and let $\bscxs \colon M \to B^q_\scxs$ be any $\scxs$-compatible normal $q$-smoothing.
If $q \geq 2$, then the following hold.
\begin{enumerate}
\item If $(M, \acs)$ admits a subcritical filling then $[M, \bscxs] = 0
  \in \Omega_{2q{+}1}(B^q_\scxs ; \eta^q_\scxs)$. 
\item If $[M, \bscxs] = 0 \in \Omega_{2q{+}1}(B^q_\scxs; \eta^q_\scxs)$
  then $(M \times F, \acs \times \acxs_F)$ admits a Stein filling.
\item If $(M \times F, \acs \times \acxs_F)$ admits a Stein filling
  and $g(F) > 0$, then the bordism class $[M, \bscxs]$ satisfies
 $[M, \bscxs] = 0 \in \Omega_{2q{+}1}(B^q_\scxs; \eta^q_\scxs)$.  
 In particular, $(M, \zeta)$ is Stein fillable.
\item If $(M \times F, \acs \times \acxs_F)$ admits a Stein filling
then $\bscxs_*([M]) = 0 \in H_{2q{+}1}(B^q_\scxs)$.
\item If $\bscxs_*([M]) = 0 \in H_{2q{+}1}(B^q_\scxs)$ then $TH_q(M) = 0$.
\end{enumerate}
\end{Theorem}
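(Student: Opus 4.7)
My plan is to combine Poincar\'{e} duality on $M$ with the non-degeneracy of the linking form on $TH_q(M)$, in order to force every torsion class in $H_q(M)$ to link trivially with every other, and hence to vanish.

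First, since $\bscxs \colon M \to B^q_\scxs$ is a $(q{+}1)$-equivalence, the relative homology groups $H_i(B^q_\scxs, M; \Z)$ vanish for $i \leq q{+}1$. Via the cohomology long exact sequence together with universal coefficients, this shows that $\bscxs^*$ is an isomorphism on $H^q$ and injective on $H^{q{+}1}$. I would then combine the hypothesis $\bscxs_*([M]) = 0$ with the projection formula
\[
\bscxs_*\bigl([M] \cap \bscxs^*(\alpha)\bigr) = \bscxs_*([M]) \cap \alpha = 0
\]
and the Poincar\'{e} duality isomorphism $[M] \cap - \colon H^q(M) \xrightarrow{\cong} H_{q{+}1}(M)$ to deduce that $\bscxs_*$ is the zero map on $H_{q{+}1}(M)$. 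Since $\bscxs_*$ is also surjective on $H_{q{+}1}$ (again from the $(q{+}1)$-equivalence), this forces $H_{q{+}1}(B^q_\scxs) = 0$.

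With $H_{q{+}1}(B^q_\scxs) = 0$ in hand, the universal coefficient theorem gives $H^{q{+}1}(B^q_\scxs; \Z) \cong TH_q(B^q_\scxs) \cong TH_q(M)$, where the last isomorphism comes from $\bscxs$ being a $(q{+}1)$-equivalence. These groups are all finite (by the finite-type assumption on $B^q_\scxs$ and since $M$ is closed), so a cardinality count combined with the injectivity of $\bscxs^*$ on $H^{q{+}1}$ promotes the map to an isomorphism $\bscxs^* \colon H^{q{+}1}(B^q_\scxs) \xrightarrow{\cong} TH^{q{+}1}(M) \cong TH_q(M)$. In particular, every torsion class in $H^{q{+}1}(M)$ is pulled back from $B^q_\scxs$.

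The final step is the linking form computation. Given any $a, b \in TH_q(M)$ with Poincar\'{e} duals $\alpha = \bscxs^*(\alpha')$ and $\beta = \bscxs^*(\beta')$, the fact that $H^{q{+}1}(B^q_\scxs; \Z)$ is pure torsion implies $H^{q{+}1}(B^q_\scxs; \Q) = 0$, so the Bockstein $H^q(B^q_\scxs; \Q/\Z) \to H^{q{+}1}(B^q_\scxs; \Z)$ associated to $0 \to \Z \to \Q \to \Q/\Z \to 0$ is surjective. Choosing any lift $\tilde\alpha' \in H^q(B^q_\scxs; \Q/\Z)$ of $\alpha'$, naturality of the Bockstein makes $\bscxs^*(\tilde\alpha')$ a $\Q/\Z$-lift of $\alpha$, and the linking pairing computes as
\[
\lambda(a, b) = \bigl\langle \bscxs^*(\tilde\alpha' \cup \beta'),\, [M] \bigr\rangle = \bigl\langle \tilde\alpha' \cup \beta',\, \bscxs_*([M]) \bigr\rangle = 0.
\]
Non-degeneracy of $\lambda$ on $TH_q(M)$ then forces $TH_q(M) = 0$. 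The step that I expect to need the most care is the middle one, verifying that $\bscxs^*$ really surjects onto the torsion subgroup of $H^{q{+}1}(M)$; once that identification is secured, Bockstein naturality and the projection formula complete the argument formally.
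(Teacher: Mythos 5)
Your proposal addresses only part~(5) of the five-part theorem, so there is a substantial gap. Parts~(1)--(4) are the technical heart of the Subcritical Filling Theorem: they require identifying the complex normal $q$-type of $M\times F$ as $(B^q_\scxs \times P_q(F),\, \eta^q_\scxs \oplus L_{\scxs_F})$, invoking the Filling Theorem~\ref{thm:Stein} for $M\times F$, and comparing bordism groups via the product map $\Pi$ and the transverse-preimage map $\pitchfork$ so as to transfer vanishing of $[M\times F,\bscxs\times\bscxs_F]$ back to vanishing of $[M,\bscxs]$ when $g(F)>0$. None of this appears, and nothing in your argument would give (1)--(4) for free; the theorem as stated is not proved.

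For part~(5), however, your argument is correct and takes a genuinely different route from the paper's. The paper argues abstractly: it notes that the classifying map $q_M\colon M\to K(H_q(M),q)$ followed by the projection to $K(TH_q(M),q)$ carries $[M]$ to a nonzero class in $H_{2q{+}1}\bigl(K(TH_q(M),q)\bigr)$ whenever $TH_q(M)\neq 0$ (a statement that encodes non-degeneracy of the linking form, cited but not proved), and then observes that this composite factors through the $(q{+}1)$-equivalence $\bscxs$, so $\bscxs_*([M])=0$ forces $TH_q(M)=0$. Your version unwinds the same mechanism explicitly: the projection formula together with Poincar\'e duality yields $H_{q{+}1}(B^q_\scxs)=0$, the universal coefficient theorem and a cardinality count identify $\bscxs^*\colon H^{q{+}1}(B^q_\scxs)\to TH^{q{+}1}(M)$ as an isomorphism between finite groups, and Bockstein naturality then shows every linking number $\lambda(a,b)$ is pulled back from $B^q_\scxs$ and hence killed by $\bscxs_*([M])=0$. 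Both arguments rest ultimately on non-degeneracy of the linking form, but yours is more self-contained and elementary, at the cost of length; the intermediate observation that $\bscxs_*([M])=0$ forces $H_{q{+}1}(B^q_\scxs)=0$ is a pleasant extra structural fact not made explicit in the paper. Each step you outline checks out, so for part~(5) the proposal is a valid alternative proof.
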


\begin{proof}
(1) The proof is similar to the proof of part  (1) of Lemma \ref{lem:topological_filling}.
Let $(W, \scxs_W)$ denote the subcritical filling with its induced
stable complex structure; it is built from $(M, \scxs)$ by
  adding handles with stable complex structure of dimension $q{+}2$ and higher. 
  Therefore the complex normal $q$-type of $M$ can be identified with that of $W$ 
  and the claim follows.

(2) Let $\scxs_F$ be the stable normal complex structure defined by
$\acxs_F$, let $P_q(F)$ be the $q^{th}$ Postnikov stage of $F$ and 
let $p_F \colon F \to P_q(F)$ be a $(q{+}1)$-equivalence (if $g(F)>0$,
then $P_q(F) = K(\pi_1(F), 1)$), and let $L_{\scxs_F}$ be the unique complex
line bundle over $P_q(F)$ such that $c_1(\scxs_F) = p_F^*(c_1(L_{\scxs_F}))$.
The complex normal $q$-type of $(M \times F, \scxs \times \scxs_F)$ is given by
\[ (B^q_{\scxs \times \scxs_F}, \eta^q_{\scxs \times \scxs_F}) =
(B^q_\scxs \times P_q(F), \eta^q_\scxs \oplus L_{\scxs_F}), \]
where, as in Section \ref{subsec:normal_k_types}, $\eta^q_\scxs \oplus
L_{\scxs_F}$ denotes the exterior Whitney sum of stable complex
bundles.
By assumption 
%and Corollary \ref{cor:Postnikov}, 
there is a $(B^q_\scxs, \eta^q_\scxs)$-null bordism $(W, \bscxs_W)$ of $(M,
\bscxs)$.  We observe that
\[ \bscxs_W \times p_F \colon W \times F
\to B^q_\scxs \times P_q(F) \]
is a $(B^q_{\scxs \times \scxs_F}, \eta^q_\scxs \times L_{\scxs_F})$-nullbordism of $(M \times F, \scxs \times \scxs_F)$.  
By Theorem \ref{thm:Stein}, $(M \times F, \acs \times \acxs_F)$ is Stein fillable.

(3) If $g(F)>0$, then $F$ is a $K(\pi, 1)$ manifold and $P_q(F) = F$.
It follows that the complex normal $q$-type of $(M \times F, \scxs \times \scxs_F)$
is given by
\[ (B^q_{\scxs \times \scxs_F}, \eta^q_{\scxs \times \acxs_F}) =
(B^q_\scxs \times F, \eta^q_\scxs \oplus L_{\scxs_F}), \]
where $L_{\scxs_F}$ is defined as in the proof of $(2)$. 
There is a canonical isomorphism of bordism groups
\[ \theta \colon \Omega_*(B^q_\scxs \times F; \eta^q_\scxs \oplus \scxs_F) 
~\cong~ \Omega_*^{(B^q_\scxs;\eta^q_\scxs)}(F; 
L_{\scxs_F}) \]
with range the $\scxs_F$-twisted $(B^q_\scxs, \eta^q_\scxs)$-bordism
group of $F$. Taking the transverse inverse image of a point $x \in
F$ defines a homomorphism
\[  \pitchfork \colon \Omega_*^{(B^q_\scxs ;\eta^q_\scxs)}(F; 
L_{\scxs_F}) \to \Omega _{*-2}(B^q_\scxs; \eta^q_\scxs). \]
On the other hand, taking the product with $(F, \scxs_F)$ defines a homomorphism
\[ \Pi \colon \Omega_{*}(B^q_\scxs ;\eta^q_\scxs) \to 
\Omega_{* + 2} (B^q_\scxs \times F;  \eta^q_\scxs \oplus \scxs_F) ,
\quad [X, \scxs_X] \mapsto [X \times F, \scxs_X \times \scxs_F]. \]
From the definitions of the above homomorphisms, we see that there is
a commutative diagram
\[ \xymatrix{  \Omega_{*}(B^q_\scxs; \eta^q_\scxs)  
\ar[d]^{\Pi} \ar@/^/[drr]^{\Id} \\
\Omega_{* + 2} (B^q_\scxs \times F; \eta^q_\scxs \oplus \scxs_F)
\ar[r]^(0.55)\theta & \Omega_{*+2}^{(B^q_\scxs;\eta^q_\scxs)}(F;
L_{\scxs_F}) \ar[r]^\pitchfork &
\Omega_{*}(B^q_\scxs; \eta^q_\scxs). } \]
If $(M \times F, \acs \times \acxs_F)$ is Stein fillable, then by
Theorem \ref{thm:Stein}, $[M \times F, \bscxs \times \bscxs_F] =
\Pi([M, \scxs]) = 0 \in \Omega_{2q+3}(B^q_\scxs \times F; \eta^q_\scxs
\oplus \scxs_F)$.  The diagram then shows that that $[M, \bscxs] = 0
\in \Omega_{2q{+}1}(B^q_\scxs ; \eta^q_\scxs)$.

(4) If $(M \times F, \scxs \times \scxs_F)$ is Stein fillable then by
Theorem \ref{thm:Stein} all $(\scxs \times \scxs_F)$-compatible normal
$q$-smoothings of $(M \times F, \scxs \times \scxs_F)$ bound over 
$(B^q_\scxs \times P_q(F), \eta^q_\scxs \times L_{\scxs_F})$.
As a consequence,
\[ (\bscxs \times p_F)_*([M \times F]) = 0 \in H_{2q+3}(B^q_\scxs \times P_q(F)). \]
Since $(p_F)_*([F]) \in H_2(P_q(F)) \cong \Z$ is a generator, 
the result now follows from the Kunneth theorem.

(5) Recall that the linking form of $M$ is a nonsingular bilinear pairing
\[ TH_q(M) \times TH_q(M) \to \Q/\Z. \]
We will show that the assumption $\bscxs_*([M]) = 0$ ensures that the linking
form of $M$ vanishes, and this can only happen if $TH_q(M)$ vanishes.

Let $p \colon H_q(M) \to TH_q(M)$ be a splitting and let 
$p \colon K(H_q(M), q) \to K(TH_q(M),q)$ also denote the induced map of Eilenberg-MacLane 
spaces.  The map $q_M \colon M \to K(H_q(M),q)$ inducing the identity on $H_q$ is
such that the composition 
\[  M \xra{~q_M~} K(H_q(M), q) \xra{~p~} K(TH_q(M), q) \]
satisfies $(p \circ q_M)_*([M]) \neq 0 \in H_{2q{+}1}(K(TH_q(M)), q)$ if $TH_q(M) \neq 0$:
This follows from the cohomological definition of the linking form and its nonsingularity.
But since the map $\bscxs \colon M \to B^q_\scxs$ is a $(q{+}1)$-equivalence, it follows that $q_M$
can be factored through $\bscxs$.  Hence if $\bscxs_*([M]) = 0$ then $(p \circ q_M)_*([M]) = 0$,
the linking form of $M$ vanishes, and $TH_q(M) = 0$.
\end{proof}

\begin{Example} \label{ex:sub-c_and_bordism}
%%%%%%%%%%%%%%%
The converse of (1) in Theorem~\ref{thm:sub-c_and_products} (that
$[M, \bscxs] = 0 \in \Omega_{2q{+}1}(B^q_\scxs ; \eta^q_\scxs)$ implies
that $(M,\acs )$ is subcritically Stein fillable) does not
hold. Notice first that the adaptation of the proof breaks down,
since the surgery method of \cite{Kreck99} works only up to the middle
dimension.  Indeed, if $\Sigma \in bP_{2q{+}2}$ is exotic, then $\Sigma$
admits an almost contact structure with stabilisation $\scxs$ such
that $[\Sigma, \scxs] = 0 \in \Omega_{2q{+}1}(B^q_\scxs; \eta^q_\scxs)$,
but $\Sigma$ does not admit a subcritical Stein filling: a subcritical
Stein filling of a homotopy sphere must be contractible,
implying that the filling is diffeomorphic to the disk and that the homotopy sphere is
standard.
\end{Example}

\begin{Example}
A simple example of a Stein fillable manifold $M$ with the property
that $M\times S^2$ is not Stein fillable is provided by $M=S^1\times
S^2\times S^2$: by the fact that $M=\partial (S^1\times S^2\times
D^3)$ we see that it is Stein fillable, while
Proposition~\ref{prop:fill1} implies that $S^1\times S^2\times
S^2\times S^2$ is not Stein fillable. 
\end{Example}

Theorem \ref{thm:sub-c_and_products} shows that the existence of a subcritical
filling of $(M, \acs)$ places strong constraints on the topology of $M$.  We next 
pursue this point further for simply connected manifolds in dimensions $5$ and $7$.
Let $S^3 \widetilde \times S^2$ and $S^5 \widetilde \times S^2$ be the total spaces of
the nontrivial linear $n$-sphere bundle over the $2$-sphere, $n = 3, 5$.

\begin{Proposition} \label{prop:sub-crit}
%%%%%%%%%%%%%%%%%%%
Suppose that $(M, \acs)$ is a simply connected almost contact manifold of 
dimension $5$ or $7$ and that $(M, \acs)$ admits a subcritical Stein filling.
\begin{enumerate}
\item If $M$ has dimension $5$, then there is a  nonnegative integer 
$r$ such that  $M$ is diffeomorphic to one of the connected sums
\[ \sharp_r(S^3 \times S^2) \quad \text{or} \quad ( S^3 \wt \times S^2) \sharp_r(S^3 \times S^2),  \]
depending on whether $M$ is spin or not.	
\item If $M$ has dimension $7$ and $\pi_2(M)$ is torsion free, 
then there are non-negative
integers $r,s$ such that $M$ is diffeomorphic to one of the connected sums
\[ \sharp_r(S^5 \times S^2) \sharp_s(S^4 \times S^3) \quad \text{or} \quad 
(S^5\wt \times S^2) \sharp_r(S^5 \times S^2) \sharp_s(S^4 \times S^3), \]
depending on whether $M$ is spin or not.	

\end{enumerate}
\end{Proposition}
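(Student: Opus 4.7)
The plan is to simplify the subcritical Stein filling $W$ by standard handle manipulations until it is a boundary connected sum of disk bundles, then read off $M = \partial W$.

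Let $(W, J)$ be a subcritical Stein filling of $(M, \acs)$, so $\dim W = 2q+2 \in \{6, 8\}$ and $W$ admits a handle decomposition with handles of index at most $q$. Viewing this decomposition from the other end, $W$ is obtained from $M \times [0, 1]$ by attaching handles of index at least $q+2 \geq 4$; such handles preserve $\pi_1$, and when $q = 3$ also $\pi_2$. Thus $\pi_1(W) = \pi_1(M) = 0$ in both cases, and $\pi_2(W) \cong \pi_2(M)$ is torsion free when $q = 3$. Moreover, Theorem~\ref{thm:sub-c_and_products}(5) gives $TH_q(M) = 0$.

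Since $\pi_1(W) = 0$ and $\dim W \geq 6$, the standard handle cancellation lemma allows us to trade all $1$-handles against $2$-handles; assume henceforth that $W$ has a single $0$-handle, $a$ $2$-handles, and (only for $q = 3$) $b$ $3$-handles. For $q = 3$, Poincar\'{e}--Lefschetz duality and the handle restriction give $H_k(W, M) \cong H^{8-k}(W) = 0$ for $k \leq 4$; combined with $TH_3(M) = 0$ this implies that $H_2(W) \cong H_2(M)$ and $H_3(W) \cong H_3(M)$ are both free abelian, of ranks $r$ and $s$ respectively. The cellular boundary $\partial_3 \colon \Z^b \to \Z^a$ has image a direct summand of rank $a - r$ and so, after change of basis (realised by handle slides), is diagonal with $\pm 1$ entries in $a - r$ positions. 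Applying the Whitney trick in the simply connected $8$-manifold $W$ geometrically cancels the corresponding $(a - r)$ pairs of $2$- and $3$-handles, yielding a handlebody $W'$ with $\partial W' = M$, one $0$-handle, exactly $r$ $2$-handles, and exactly $s$ ``floating'' $3$-handles whose attaching $2$-spheres are null-homotopic.

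In both dimensions the $2$-handle attaching circles lie in a simply connected $(2q+1)$-sphere boundary and so are pairwise disjoint standard unknots in disjoint balls; their framings are classified by $\pi_1(SO(2q)) \cong \Z_2$, giving trivial or twisted $D^{2q}$-bundles over $S^2$. For $q = 3$, the floating $3$-handles have null-homotopic attaching $S^2$'s; after isotoping into balls disjoint from the $2$-handles, their framings lie in $\pi_2(SO(5)) = 0$, so they uniquely yield trivial $D^5$-bundles over $S^3$. Hence $W'$ is a boundary connected sum of standard disk bundles, and $M = \partial W'$ is the corresponding connected sum of sphere bundles. Finally, whenever two or more twisted $2$-handles occur in $W'$, sliding one over another changes its framing by the sum of the two framings in $\pi_1(SO(2q)) \cong \Z_2$, converting a twisted handle into a trivial one; iterating reduces the number of twisted $S^{2q-1}\wt\times S^2$ summands in $M$ to zero (if $M$ is spin) or exactly one (if not), giving the stated diffeomorphism type.

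The main obstacle is the geometric cancellation of $2$- and $3$-handles in the $8$-dimensional case, which relies on the Whitney trick, $\pi_1(W) = 0$, and the torsion-freeness of $H_2(M)$ and $H_3(M)$. A subtle point throughout is that handle manipulations need not preserve the Stein structure on $W$; we only need the diffeomorphism type of $M$, so $W'$ need only be a smooth nullbordism.
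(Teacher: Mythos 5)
Your proposal is correct in outline and reaches the same conclusion, but by a genuinely different route. The paper identifies $W$ up to homotopy as a wedge of $2$- and $3$-spheres and then invokes Wall's classification of \emph{stable thickenings} (\cite[Proposition 5.1]{Wall67}): since $\dim W \geq 2\dim(\text{wedge}) + 1$, the filling $W$ is a stable thickening and is therefore determined up to diffeomorphism by its homotopy type together with its stable tangent bundle, which reduces to $w_2 \in H^2(W;\Z_2)$. You instead re-derive this normal form for $W$ by hand: handle cancellation, diagonalisation of $\partial_3$ via slides, the Whitney trick, unknotting of attaching spheres in $S^{2q+1}$, and framing calculations. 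This is essentially an explicit re-proof of the relevant special case of Wall's thickening theorem, so it is more self-contained but also more delicate. What the paper's approach buys is that the dimension counts guaranteeing stability (which you implicitly need for the Whitney trick and isotopy arguments) are handled once and for all by Wall; what your approach buys is independence from the thickening machinery and a more explicit picture of $W'$.

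Two small inaccuracies worth fixing. First, one cannot ``trade all $1$-handles against $2$-handles'' directly: the Whitney trick fails for a $1$-/$2$-handle pair because the belt sphere of the $1$-handle has codimension one in the level set. The standard argument trades $1$-handles for $3$-handles; for $q=3$ these simply join your $b$ existing $3$-handles, but for $q=2$ they must then be cancelled against $2$-handles (which is possible since $H_3(W)=0$ and $H_2(W)$ is free) — your written proof skips this step in dimension $6$. Second, to conclude $TH_q(M)=0$ you actually use the full chain of implications (1)$\Rightarrow$(2)$\Rightarrow$(4)$\Rightarrow$(5) of Theorem~\ref{thm:sub-c_and_products}, not part (5) alone, since (5) requires the vanishing of $\bscxs_*([M])$ as an input. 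Neither of these affects the validity of the overall argument, and your remark that the handle manipulations need not preserve the Stein structure is exactly the right caveat.
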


\begin{proof}
%%%%%%%%%%%%%
If $(W, \acxs)$ is a subcritical filling of $(M, \acs)$, then $W$ is obtained from $M$
by attaching $(q{+}2)$-handles and higher.  It follows that the map $M \to W$ is a $(q{+}1)$-equivalence.
Now by Theorem \ref{thm:sub-c_and_products} (1), (3), (4) and (5), $TH_q(M) = 0$ and
so $TH_q(W) = 0$.  Since $W$ is also a simply connected manifold consisting only of handles of dimension
$q$ or less, we conclude the following: if $q = 2$, it follows the $W$ is homotopy equivalent to a wedge 
of $2$-spheres and if $q = 3$, then $W$ is homotopy equivalent to wedge of $2$-spheres and $3$-spheres.
Note that for the case $q = 3$ we use the assumption that $\pi_2(M) \cong H_2(M)$ is torsion free.
It follows, using the terminology of \cite{Wall67} that the manifold $W$ is then a stable thickening 
of a wedge of spheres.  By \cite[Propsition 5.1]{Wall67} stable thickenings are classified up to
diffeomorphism by their homotopy type and the map classifying their stable tangent bundle.
Now for the $W$ we consider, $[W, BSO] \cong H^2(W; \Z_2)$, the bijection being given by the second
Stiefel-Whitney class.  If $\natural$ denotes the boundary connected sum of manifolds with boundary
and $D^4 \wt \times S^2$ and $D^6 \wt \times S^2$ denote the non-trivial linear disc bundles over $S^2$,
we deduce that $W$ is diffeomorphic to one of the following manifolds:
\begin{align*} \text{Dimension 5:} & \quad \natural_r(D^4 \times S^2) \quad \ \ \  \ \ \ \ \ \ \ \ \  \ \ \ \,  
	\text{or} \quad ( D^4 \wt \times S^2) \natural_r(D^4 \times S^2),\\
 \text{Dimension 7:} & \quad \natural_r(D^6 \times S^2) \sharp_s(D^5 \times S^3) \quad  \text{or} \quad 
(D^6 \wt \times S^2) \sharp_r(D^6 \times S^2) \sharp_s(D^5 \times S^3),
\end{align*}
and the proposition follows.
\end{proof}

We conclude this section by viewing Example \ref{ex:sub-c_and_bordism} 
in a more general framework.
Let $L_{2q{+}2}^{s, \tau}(\pi)$ denote 
the group of units in the surgery obstruction monoid $l_{2q{+}2}(\pi)$,
which was defined in \cite[\S 6]{Kreck99}.
(The notation is from \cite[\S 4]{Kreck85} and differs from \cite{Kreck99}.
In addition, $L_{2q{+}2}^{s, \tau}(\pi)$ may be identified with the 
obstruction group $L_{2q{+}2}^C(\pi)$ of \cite[17D]{Wall99}.)
The group $L_{2q{+}2}^{s, \tau}(\pi)$
acts on the set of $(B^q_\scxs, \eta^q_\scxs)$-diffeomorphism classes of complex
normal $q$-smoothings $\bscxs \colon M \to B^q_\scxs$ without changing
the $(B^q_\scxs, \eta^q_\scxs)$-bordism class.  That is, writing
$(M_{+\rho}, \bscxs_{+\rho})$ for the action of 
$\rho \in L_{2q{+}2}^{s,\tau}(\pi)$ on $(M, \bscxs)$, we have
\[  [M, \bscxs] = [M_{+\rho}, \bscxs_{+\rho}] \in
\Omega_{2q{+}1}(B^q_\scxs ; \eta^q_\scxs) .\]
For example, if $M$ is simply connected, then $L_{2q{+}2}^{s, \tau}(e)
\cong \Z$ or $\Z_2$ as $q$ is odd or even, and the action of
$L_{2q{+}2}^{s, \tau}(e)$ is via connected sum with $(\Sigma, \bscxs_\Sigma)$, 
where $\Sigma$ is a generator of $bP_{2q{+}2}$ and $\bscxs_\Sigma$ is a certain a 
$(B^q_\scxs, \eta^q_\scxs)$-structure on $\Sigma$.
\begin{Question} \label{qn:sub-c_and_bP}
%%%%%%%%%%%%%%%%
Suppose that $\bscxs \colon M \to B^q_\scxs$ is a normal
$q$-smoothing such that $[M, \bscxs] = 0 \in \Omega_{2q{+}1}(B^q_\scxs ;
\eta^q_\scxs)$.  Under what conditions on $M$ can we
deduce that there is an element $\rho \in L_{2q{+}2}^{s,\tau}(\pi)$ 
such that $(M_{+\rho}, \bscxs_{+\rho})$ admits a
subcritical Stein filling?  For example, if $M$ is simply connected,
is there a homtopy sphere $\Sigma \in bP_{2q{+}2}$ such that $(M \sharp
\Sigma, \bscxs \sharp \bscxs_\Sigma)$ admits a subcritical Stein
filling?  
\end{Question}
% (end)

\end{document}